\newtheorem{theorem}{Theorem}[section]
\newtheorem{lemma}[theorem]{Lemma}
\newtheorem{corollary}[theorem]{Corollary}
\newtheorem{proposition}[theorem]{Proposition}
 \def\cD{{\mathcal D}}
  \def\bbZ{{\mathbb Z}}  \def\bbQ{{\mathbb Q}}
    \def\bbF{{\mathbb F}}
  \def\bbV{{\mathbb V}}
\def\bbW{{\mathbb W}}
         \def\bfU{{\bf U}}
  \def\leq{\leqslant}  \def\geq{\geqslant}
\def\Hom{\mbox{\rm Hom}}
\def\dim{\mbox{\rm dim}\,}
\def\bfV{{\mathbf V}}
\def\bfW{{\mathbf W}}
\begin{document}

\title[Geometric realizations of Ringel-Hall algebras]
{Geometric realizations of Ringel-Hall algebras of continuous quivers of type $A$}

\author[Zhao]{Minghui Zhao}
\address{School of Science, Beijing Forestry University, Beijing 100083, P. R. China}
\email{zhaomh@bjfu.edu.cn (M.Zhao)}

\subjclass[2000]{16G20, 17B37}

\date{\today}

\keywords{Geometric realizations, Ringel-Hall algebras, Continuous quivers, Canonical bases}

\bibliographystyle{abbrv}

\begin{abstract}
Lusztig introduced the geometric realizations of quantum groups associated to finite quivers and defined their canonical bases. Sala and Schiffmann introduced the Ringel-Hall algebra of line and realized it as the direct limit of Ringel-Hall algebras of finite quivers of type $A$. In this paper, we shall give geometric realizations of Ringel-Hall algebras of continuous quivers of type $A$ via the geometric realizations of Lusztig by using the method of approximation given by Sala and Schiffmann.
\end{abstract}

\maketitle


\section{Introduction}

\subsection{}

The Ringel-Hall algebra of the category of representations of a finite quiver $Q=(Q_0,Q_1)$ was introduced by Ringel in \cite{Ringel_Hall_algebras_and_quantum_groups}.  The composition subalgebra of the Ringel-Hall algebra is isomorphic to the negative part $\bfU^-$ of the quantum group associated to the quiver $Q$ (\cite{Ringel_Hall_algebras_and_quantum_groups,Green1995,XIAO1997100}). 

Inspired by the work of Ringel,
Lusztig gave a geometric realization of $\bfU^-$ in 
\cite{Lusztig_Canonical_bases_arising_from_quantized_enveloping_algebra,Lusztig_Quivers_perverse_sheaves_and_the_quantized_enveloping_algebras}.
Fix a $Q_0$-graded vector space $\bbV=\bigoplus_{x\in Q_0}\bbV_x$ with dimension vector $\nu$ and denote by $E_{\bbV}$ the variety consisting of representations of the quiver $Q$.
There is a $G_{\bbV}=\prod_{x\in Q_0}GL(\bbV_x)$ action on $E_{\bbV}$.
Denote by $\cD_{G_\bbV}^b(E_\bbV)$ the $G_\bbV$-equivariant bounded derived category of $\overline{\bbQ}_l$-constructible mixed complexes on $E_\bbV$.
Let $K(\mathcal{Q}_{\bbV})$ be the Grothendieck group of an additive subcategory $\mathcal{Q}_{\bbV}$ of $\cD_{G_\bbV}^b(E_\bbV)$.
Lusztig proved that $\bigoplus_{\nu}K(\mathcal{Q}_{\bbV})$ is isomorphic to the integral form of $\bfU^-$. By this isomorphism, the isomorphism classes of simple perverse sheaves in $\mathcal{Q}_{\bbV}$ provide a basis of the $\bfU^-$ (\cite{Lusztig_Canonical_bases_arising_from_quantized_enveloping_algebra,Lusztig_Quivers_perverse_sheaves_and_the_quantized_enveloping_algebras,Lusztig_Introduction_to_quantum_groups}).
Xiao, Xu and Zhao generalized the geometric construction to the generic form of the whole Ringel-Hall algebra in \cite{XXZ2022254}.

Lusztig also gave constructions of Ringel-Hall algebras via functions in \cite{Lusztig_Canonical_bases_and_Hall_algebras}. Denoted by
$\mathcal{F}_{G^F_\bbV}(E^F_\bbV)$
the set of $G^F_\bbV$-invariant finitely supported $\mathbb{C}$-valued functions on $E^F_\bbV$, where $G^F_\bbV$ and $E^F_\bbV$ are closed points in $G_\bbV$ and $E_\bbV$ fixed by Frobenius morphisms.
Let $\mathcal{F}_{Q}=\bigoplus_{\nu}\mathcal{F}_{G^F_\bbV}(E^F_\bbV)$.
Lusztig showed that $\mathcal{F}_{Q}$ is isomorphic to the Ringel-Hall algebra of $Q$.
According to the sheaf-function correspondence (\cite{Kiehl_Weissauer_Weil_conjectures_perverse_sheaves_and_l'adic_Fourier_transform}), there is a homomorphism of algebras $\chi^{F}:K_{Q}\rightarrow\mathcal{F}_{Q}$.

\subsection{}
Representation theory of continuous quivers of type $A$ is important in persistent homology and widely used in topological data analysis (\cite{Persistence}).
In \cite{Crawley2015Decomposition,2018Decomposition}, Crawley-Boevey and Botnan gave a
classification of indecomposable representations of $\mathbb{R}$.
In \cite{2017Interval}, Botnan gave a
classification of indecomposable representations of infinite zigzag.
Igusa, Rock and Todorov introduced general continuous quivers of type
$A$ and classified indecomposable representations (\cite{Igusa2022Continuous,rock2023A_2,IGUSA_ROCK_TODOROV_2022,rock2023A_3}).
Hanson and Rock introduced continuous quivers of type $\tilde{A}$ and gave a
classification of indecomposable representations (\cite{2020Decomposition_Hanson_Rock}). 
Rock and Zhu studied Nakayama representations of this quiver
in \cite{rock2023continuous_Nakayama}.
In \cite{Appel_2020,Appel_2022,Sala_2019,Sala_2021}, Appel, Sala
and Schiffmann introduced continuum quivers, continuum Kac–Moody algebras and continuum quantum groups associated to continuum quivers.

In \cite{Sala_2019,Sala_2021}, Sala and Schiffmann showed that the continuum quantum group associated to the circle can be realized as the direct limit of a direct system of quantum groups of type $\tilde{A}$.
This result also holds for quantum group of the line (a continuous quiver of type $A$).
In \cite{Sala_2019,Sala_2021}, Sala and Schiffmann introduced the Ringel-Hall
algebras associated to the circle and the line. They also studied  relations between Ringel-Hall
algebras and continuum quantum groups.

\subsection{}

In this paper, we shall give a geometric realization of the Ringel-Hall algebra of a continuous quiver of type $A$ by using the method of approximation in \cite{Sala_2019,Sala_2021} and the geometric realizations of quantum groups of type $A$ in \cite{Lusztig_Canonical_bases_arising_from_quantized_enveloping_algebra,Lusztig_Quivers_perverse_sheaves_and_the_quantized_enveloping_algebras,Lusztig_Introduction_to_quantum_groups}.

Let $A_\mathbb{R}$ be a continuous quiver of type $A$ and 
$\mathcal{I}$ a partition of $\mathbb{R}$.
We define a finite quiver $Q_{\mathcal{I}}$ of type $A$ associated to $A_{\mathbb{R}}$ and $\mathcal{I}$.
Similarly to the method of approximation in \cite{Sala_2019,Sala_2021}, we introduce a $\mathbb{Z}[v,v^{-1}]$-algebra $K_{A_\mathbb{R}}$ as a direct limit of a direct system  $(K_{Q_{\mathcal{I}}},\Phi_{\mathcal{I}_1,\mathcal{I}_2})$. 
Let $\mathcal{F}_{A_\mathbb{R}}$ be the Ringel-Hall algebra associated to $A_\mathbb{R}$.
According to the sheaf-function correspondence, there is a homomorphism of $\mathbb{Z}$-algebras $\chi_{A_\mathbb{R}}: K_{A_\mathbb{R}}\rightarrow\mathcal{F}_{A_\mathbb{R}}$. The algebra $K_{A_\mathbb{R}}$ can be viewed as a geometric version of $\mathcal{F}_{A_\mathbb{R}}$.
In this paper, we also define a canonical basis of a completion  $\bar{K}_{A_{\mathbb{R}}}$ of $K_{A_\mathbb{R}}$.

\subsection{}

In Section 2, we shall recall the definitions of Ringel-Hall algerbas of finite quivers and their geometric realizations.
In Section 3, the concepts of continuous quivers of type $A$ and their representations are recalled. In this section, we also
define a finite quiver $Q_{\mathcal{I}}$ of type $A$ associated to a continuous quivers $A_{\mathbb{R}}$ and a partition $\mathcal{I}$.
In Section 4, 
we recall the Ringel-Hall algebras $\mathcal{F}_{A_\mathbb{R}}$ associated to a continuous quiver $A_\mathbb{R}$ of type $A$. Then the geometric version $K_{A_\mathbb{R}}$ of $\mathcal{F}_{A_\mathbb{R}}$ is defined.
The completion $\bar{K}_{A_\mathbb{R}}$ of 
$K_{A_\mathbb{R}}$ and its canonical basis are defined in Section 5.

\section{Ringel-Hall algebras and canonical bases}

In this section, we shall recall the definitions of Ringel-Hall algerbas of finite quivers (\cite{Ringel_Hall_algebras_and_quantum_groups,Schiffmann_Lectures1}) and their geometric realizations (\cite{Lusztig_Canonical_bases_arising_from_quantized_enveloping_algebra,Lusztig_Quivers_perverse_sheaves_and_the_quantized_enveloping_algebras,Lusztig_Introduction_to_quantum_groups,Schiffmann_Lectures2}).

\subsection{}
A quiver $Q=(Q_0,Q_1,s,t)$ is consisting of a set $Q_0$ of vertices, a set $Q_1$  of arrows and two maps $s,t:Q_1\rightarrow Q_0$ such that $s(h)$ is the source of $h$ and $t(h)$ is the target of $h$ for any $h\in Q_1$.

Let $K$ be a fixed field. A representation $V=(V(x),V(h))$ of $Q$ over ${K}$ is consisting of
a ${K}$-vector space $V(x)$ for any $x\in Q_0$ and
a ${K}$-linear map $V(h):V(s(h))\rightarrow V(t(h))$ for any $h\in Q_1$.

The collection of linear maps $f_{x}:V(x)\rightarrow W(x)$ for all $x\in Q_0$ is called a map from $V$ to $W$, if $f_{t(h)}V(h)=W(h)f_{s(h)}$ for all $h\in Q_1$.

Denote by $\textrm{Rep}_{{K}}(Q)$ the category of ${K}$-linear representations of $Q$ and
by $\textrm{rep}_{{K}}(Q)$ the subcategory of finitely dimensional representations.

A function $\nu:Q_0\rightarrow\mathbb{Z}_{\geq0}$ is called the dimension vector of a representation $V=(V(x), V(h))$, if $\dim V(x)=\nu(x)$ for all $x\in Q_0$. In this case, we denote $\underline{\dim}V=\nu$.

Fix a dimension vector $\nu$ and a $Q_0$-graded ${K}$-vector space $\bbV=\oplus_{x\in Q_0}\bbV_x$ such that $\dim\bbV_x=\nu(x)$.
In this case, we also denote $\underline{\dim}\bbV=\nu$.
Let
$$E_\bbV=E_{Q,\bbV}=\prod_{h\in Q_1}\Hom_{{K}}(\bbV_{s(h)},\bbV_{t(h)}).$$
The group
$G_\bbV=\prod_{x\in Q_0}GL_{K}(\bbV_x)$ acts on $E_\bbV$ by $g.\phi=g\phi g^{-1}$ for any $g\in G_\bbV$ and $\phi\in E_\bbV$.

\begin{proposition}[\cite{Crawley-Boevey90}]\label{one-to-one}
There is an one to one correspondence between the $G_\bbV$-orbits in $E_{\bbV}$
and the isomorphism classes of finitely dimensional representations of $Q$. 
\end{proposition}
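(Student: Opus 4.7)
The plan is to unpack definitions and verify that the natural map $E_\bbV \to \{\text{representations of } Q\text{ on } \bbV\}$ descends to a bijection between orbits and isomorphism classes.

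First I would make explicit the map $\Psi$ sending $\phi=(\phi_h)_{h\in Q_1}\in E_\bbV$ to the representation $V_\phi=(\bbV_x, \phi_h)$; here the underlying graded space is the fixed $\bbV$, so every representation $V_\phi$ has dimension vector $\nu$. Next I would show $\Psi$ is essentially surjective onto the class of all representations with $\underline{\dim} V=\nu$: given such a $V=(V(x),V(h))$, choose $K$-linear isomorphisms $f_x:V(x)\xrightarrow{\sim}\bbV_x$ for each $x\in Q_0$ (possible since $\dim V(x)=\nu(x)=\dim \bbV_x$), and set $\phi_h=f_{t(h)}\circ V(h)\circ f_{s(h)}^{-1}$. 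Then $(f_x):V\to V_\phi$ is an isomorphism of representations.

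Second I would verify that the fibres of $\Psi$, modulo the $G_\bbV$-action, are precisely the isomorphism classes. For the forward direction, suppose $\phi'=g.\phi$ with $g=(g_x)\in G_\bbV$; unpacking the action $g.\phi=g\phi g^{-1}$ gives $\phi'_h=g_{t(h)}\phi_h g_{s(h)}^{-1}$ for every $h\in Q_1$. Thus $g_{t(h)}\phi_h=\phi'_h g_{s(h)}$, which is exactly the compatibility condition making $g=(g_x):V_\phi\to V_{\phi'}$ a morphism of representations; invertibility of each $g_x$ makes it an isomorphism. For the reverse direction, any isomorphism $V_\phi\xrightarrow{\sim}V_{\phi'}$ consists of invertible $K$-linear maps $g_x:\bbV_x\to\bbV_x$ satisfying the same commutation relations, so $g=(g_x)\in G_\bbV$ and $g.\phi=\phi'$.

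Combining these two facts yields a well-defined injection from $G_\bbV$-orbits into isomorphism classes of finite-dimensional representations of $Q$, and essential surjectivity (restricted to representations of dimension vector $\nu$) gives the claimed bijection. Finally I would note that letting $\nu$ vary recovers all finite-dimensional representations, so the statement as written follows by taking the disjoint union over dimension vectors $\nu$, with the understanding that $E_\bbV$ and $G_\bbV$ depend on a choice of $\bbV$ with $\underline{\dim}\bbV=\nu$.

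There is no genuine obstacle here: the result is a direct translation of the definitions of the $G_\bbV$-action and of morphisms of quiver representations, and the only mild point to be careful about is choosing the identifications $V(x)\cong\bbV_x$ when proving essential surjectivity.
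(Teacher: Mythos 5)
Your argument is correct and is exactly the standard proof of this classical fact (the paper itself gives no proof, simply citing Crawley-Boevey's notes, where the same translation of the $G_\bbV$-conjugation action into the definition of a morphism of representations is used). You also rightly flag the one point of imprecision in the statement as written: for a fixed $\bbV$ the orbits correspond only to isomorphism classes of representations with dimension vector $\nu=\underline{\dim}\bbV$, and the full statement is obtained by varying $\nu$.
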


For any representation $V$, let $\mathcal{O}_{V}$ be the orbit corresponding to $[V]$.

\subsection{}

In this section, we follow the notations in \cite{Kiehl_Weissauer_Weil_conjectures_perverse_sheaves_and_l'adic_Fourier_transform,Bernstein_Lunts_Equivariant_sheaves_and_functors}.

Let $\bbF_q$ be a finite field with $q$ elements and ${K}$ the algebraic closure of $\bbF_q$.
Let $G$ be an algebraic group over $K$ and $X$ a scheme of finite type over $K$ together with a $G$-action.
Assume that $X$ and $G$ have $\mathbb{F}_q$-structures and let $F_{X}: X\rightarrow X$ and $F_{G}: G\rightarrow G$ be the Frobenius morphisms. 
Let $G^F$ and $X^F$ be the sets of closed points of $G$ and $X$ fixed by  the Frobenius morphisms.

Denote by $\mathcal{D}_G^b(X)$ the $G$-equivariant bounded derived category of $\overline{\mathbb{Q}}_l$-constructible mixed complexes on $X$.
Let $K_{G}(X)$ be the Grothendieck group of $\mathcal{D}_{G}^b(X)$.
Denoted by
$\mathcal{F}_{G^F}(X^F)$
the set of $G^F$-invariant finitely supported $\mathbb{C}$-valued functions on $X^F$.
According to the sheaf-function correspondence (\cite{Kiehl_Weissauer_Weil_conjectures_perverse_sheaves_and_l'adic_Fourier_transform}), there is a map $$\chi^{F}: K_{G}(X)\rightarrow\mathcal{F}_{G^F}(X^F)$$ sending $\mathcal{L}$ to $\chi_{\mathcal{L}}^{F}$.

\subsection{}
Let $Q=(Q_0,Q_1)$ be a quiver. Fix a dimension vector $\nu$ and a $Q_0$-graded ${K}$-vector space $\bbV$ such that $\underline{\dim}\bbV=\nu$.
Note that $\bbV$, $E_\bbV$ and $G_\bbV$ have $\bbF_q$-structures.
Let $F_\bbV:\bbV\rightarrow\bbV$, $F_{E_\bbV}:E_\bbV\rightarrow E_\bbV$ and $F_{G_\bbV}:G_\bbV\rightarrow G_\bbV$ be the corresponding Frobenius morphisms. Let $\bbV^F$, $E_\bbV^F$ and $G_\bbV^F$ be the sets of closed points of $\bbV$, $E_\bbV$ and $G_\bbV$ fixed by the Frobenius morphisms, respectively.

For any $\nu,\nu',\nu''$ such that $\nu=\nu'+\nu''$, fix $Q_0$-graded ${K}$-vector spaces $\bbV$, $\bbV'$, $\bbV''$ with dimension vectors $\nu,\nu',\nu''$, respectively.

Consider the following diagram 
\begin{equation}\label{multi-L}
\xymatrix{E_{\bbV'}\times{E}_{\bbV''}&{E}'\ar[l]_-{p_1}\ar[r]^-{p_2}&{E}''\ar[r]^-{p_3}&{E}_{\bbV}},    
\end{equation}
where
\begin{enumerate}
  \item[(1)]${E}''=\{(\phi,\bbW)\}$, where $\phi\in {E}_{\bbV}$ and $\bbW$ is a $\phi$-stable (i.e. $\phi_h(\bbW_{s(h)})\subset\bbW_{t(h)}$ for any $h\in Q_1$) subspace of $\bbV$ with dimension vector $\nu''$;
  \item[(2)]${E}'=\{(\phi,\bbW,R'',R')\}$, where $(\phi,\bbW)\in {E}''$, $R'':\bbV''\simeq \bbW$ and $R':\bbV'\simeq{\bbV}/\bbW$;
  \item[(3)]$p_1(\phi,\bbW,R'',R')=(\phi',\phi'')$, where $\phi'$ and $\phi''$ are induced by the following commutative diagrams
      $$\xymatrix{\bbV'_{s(h)}\ar[r]^-{\phi'_{h}}\ar[d]^-{R'_{s(h)}}&\bbV'_{t(h)}\ar[d]^-{R'_{t(h)}}\\
      (\bbV/\bbW)_{s(h)}\ar[r]^-{\phi_{h}}&(\bbV/\bbW)_{t(h)}}$$ and
      $$\xymatrix{\bbV''_{s(h)}\ar[r]^-{\phi''_{h}}\ar[d]^-{R''_{s(h)}}&\bbV''_{t(h)}\ar[d]^-{R''_{t(h)}}\\
      \bbW_{s(h)}\ar[r]^-{\phi_{h}}&\bbW_{t(h)}}$$
      for all $h\in Q_1$;
  \item[(4)]$p_2(\phi,\bbW,R'',R')=(\phi,\bbW)$;
  \item[(5)]$p_3(\phi,\bbW)=\phi$.
\end{enumerate}

The group $G_{\bbV}$ acts on $E''$ by $g.(\phi, \bbW)=(g\phi g^{-1}, g\bbW)$ for any $g\in G_{\bbV}$.
The groups $G_{\bbV'}\times G_{\bbV''}$ and $G_{\bbV}$ act on $E'$ by $(g_1, g_2).(\phi,\bbW,R'',R')=(\phi, \bbW, g_2R'', g_1R')$ and $g.(\phi,\bbW, R'',R')=(g\phi g^{-1}, g\bbW, R''g^{-1}, R'g^{-1})$ for any $(g_1, g_2)\in G_{\bbV'}\times G_{\bbV''}$ and $g\in G_{\bbV}$. The map $p_1$ is $G_{\bbV'}\times G_{\bbV''}\times G_{\bbV}$-equivariant ($G_{\bbV}$ acts on $E_{\bbV'}\times E_{\bbV''}$ trivially)  and $p_2$ is a principal $G_{\bbV'}\times G_{\bbV''}$-bundle.

Since $p_2$ is a principal $G_{\bbV'}\times G_{\bbV''}$-bundle, there is an equivalence $$p^\ast_2:\mathcal{D}^b_{G_\bbV}(E'')\rightarrow\mathcal{D}^b_{G_\bbV\times G_{\bbV'}\times G_{\bbV''}}(E').$$ The inverse of $p^\ast_2$ is denoted by $${p_2}_\flat:\mathcal{D}^b_{G_\bbV\times G_{\bbV'}\times G_{\bbV''}}(E')\rightarrow\mathcal{D}^b_{G_\bbV}(E'').$$
Then we can define the induction functor
\begin{eqnarray*}
\ast:\mathcal{D}^b_{G_{\bbV'}}(E_{\bbV'})\times\mathcal{D}^b_{G_{\bbV''}}(E_{\bbV''})&\rightarrow&\mathcal{D}^b_{G_\bbV}(E_\bbV)\\
(\mathcal{L}',\mathcal{L}'')&\mapsto&\mathcal{L}'\ast\mathcal{L}''={p_3}_{!}{p_2}_\flat p_1^{\ast}(\mathcal{L}'\boxtimes\mathcal{L}'').
\end{eqnarray*}

Let 
$$K_{{Q}}=\bigoplus_{\nu\in\mathbb{N}{Q_0}}K_{G_\bbV}(E_\bbV),$$
which is a $\mathbb{Z}[v,v^{-1}]$-module by $v^{\pm1}[\mathcal{L}]=[\mathcal{L}[\pm1](\pm\frac{1}{2})]$, where $[n]$ is the shift functor and $(\frac{n}{2})$ is the Tate twist.

\begin{proposition}[\cite{Lusztig_Canonical_bases_arising_from_quantized_enveloping_algebra,Lusztig_Quivers_perverse_sheaves_and_the_quantized_enveloping_algebras,Lusztig_Introduction_to_quantum_groups}]
The induction functors for various $\nu$, $\nu'$, $\nu''$ induce an associative multiplication structure $$\ast:K_{{Q}}\times K_{{Q}}\rightarrow K_{{Q}}$$ sending $(\mathcal{L}',\mathcal{L}'')$ to $\mathcal{L}'\ast\mathcal{L}''$.
\end{proposition}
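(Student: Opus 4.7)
The plan is to first establish well-definedness and bilinearity of the operation at the level of Grothendieck groups, then reduce associativity to the existence of a symmetric ``triple'' induction diagram that generalises~(\ref{multi-L}).

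First, I would check that the construction $(\mathcal{L}',\mathcal{L}'')\mapsto {p_3}_!{p_2}_\flat p_1^\ast(\mathcal{L}'\boxtimes\mathcal{L}'')$ descends to the Grothendieck groups and is $\mathbb{Z}[v,v^{-1}]$-bilinear. Since $p_1^\ast$, ${p_2}_\flat$ and ${p_3}_!$ all preserve distinguished triangles and commute with the shift $[\pm 1]$ and the half Tate twist $(\pm\tfrac{1}{2})$, the resulting operation on classes respects the relation $v^{\pm 1}[\mathcal{L}]=[\mathcal{L}[\pm 1](\pm\tfrac{1}{2})]$ and is biadditive. This produces a well-defined $\mathbb{Z}[v,v^{-1}]$-bilinear map $\ast\colon K_Q\times K_Q\to K_Q$.

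For associativity, fix a triple of dimension vectors $\nu^{(1)},\nu^{(2)},\nu^{(3)}$ with sum $\nu$, together with $Q_0$-graded spaces $\bbV^{(i)}$ of those dimension vectors. I would introduce a three-step analogue of~(\ref{multi-L}):
$$\xymatrix{E_{\bbV^{(1)}}\times E_{\bbV^{(2)}}\times E_{\bbV^{(3)}} & \widetilde{E}'\ar[l]_-{\tilde p_1}\ar[r]^-{\tilde p_2} & \widetilde{E}''\ar[r]^-{\tilde p_3} & E_\bbV,}$$
where $\widetilde{E}''$ parametrises pairs $(\phi,\bbW_\bullet)$ with $\phi\in E_\bbV$ and $0\subset\bbW_2\subset\bbW_1\subset\bbV$ a $\phi$-stable flag satisfying $\underline{\dim}\bbW_2=\nu^{(3)}$ and $\underline{\dim}(\bbW_1/\bbW_2)=\nu^{(2)}$, and $\widetilde{E}'$ additionally records trivialisations $\bbV^{(1)}\simeq\bbV/\bbW_1$, $\bbV^{(2)}\simeq\bbW_1/\bbW_2$ and $\bbV^{(3)}\simeq\bbW_2$. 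Setting $\mathcal{L}^{(1)}\ast\mathcal{L}^{(2)}\ast\mathcal{L}^{(3)}:={\tilde p_3}_!\,{\tilde p_2}_\flat\,\tilde p_1^\ast(\mathcal{L}^{(1)}\boxtimes\mathcal{L}^{(2)}\boxtimes\mathcal{L}^{(3)})$ gives a manifestly symmetric triple induction.

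The heart of the argument is then to identify both $(\mathcal{L}^{(1)}\ast\mathcal{L}^{(2)})\ast\mathcal{L}^{(3)}$ and $\mathcal{L}^{(1)}\ast(\mathcal{L}^{(2)}\ast\mathcal{L}^{(3)})$ with this triple induction. For the right-associated product I would factor $\widetilde{E}''$ through the two-step $E''$ parametrising just the subspace $\bbW_1\subset\bbV$ of dimension vector $\nu^{(2)}+\nu^{(3)}$: remembering $\bbW_1$ defines a smooth map whose fibres are two-step diagrams for $\phi|_{\bbW_1}$, so proper base change in the resulting Cartesian square commutes the induction inside $\bbW_1$ with the induction from $(\bbV/\bbW_1,\bbW_1)$ up to $\bbV$. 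The left-associated product is handled symmetrically by factoring through the two-step diagram with $\bbW=\bbW_2$. The key inputs are proper base change for ${p_3}_!$ (the maps $p_3,\tilde p_3$ are proper because the relevant Grassmannians and flag varieties are proper), the compatibility of $(\cdot)_\flat$ with flat pullback so that it commutes with the forgetful smooth maps, and the Künneth-type identity relating $p_1^\ast$ and $\tilde p_1^\ast$ under the appropriate factorisation.

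The step I expect to be the main obstacle is the careful bookkeeping of equivariance groups across all these Cartesian squares. Each principal-bundle functor $(\cdot)_\flat$ lives in a specific equivariant derived category, and in passing between the two-step and three-step diagrams one has to insert and remove factors $G_{\bbV^{(i)}}$ of the structure group while tracking how each projection is equivariant. Ensuring that every Cartesian square used for base change is genuinely equivariant and that the iterated $\flat$-functors compose canonically with the pullbacks is where the real work lies; once this is in place, associativity follows immediately from the symmetry of the triple-induction construction.
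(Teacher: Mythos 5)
Your argument is correct and is essentially the standard proof from the cited sources (Lusztig's construction), which the paper itself does not reproduce but simply quotes: one descends the triangulated functors to the Grothendieck group and proves associativity by identifying both bracketings with a symmetric triple induction over two-step flags, using base change for $(\cdot)_!$ and the compatibility of the principal-bundle descent functor $(\cdot)_\flat$ with the intermediate smooth forgetful maps. The equivariance bookkeeping you flag is exactly where the technical care is needed, but it is routine and does not present a genuine obstruction.
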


\subsection{}
Similarly to (\ref{multi-L}), consider the following diagram over $\mathbb{F}_q$
$$
\xymatrix{E^F_{\bbV'}\times{E}^F_{\bbV''}&{E}'^F\ar[l]_-{p_1}\ar[r]^-{p_2}&{E}''^F\ar[r]^-{p_3}&{E}^F_{\bbV}}.
$$

Since $p_2$ is a principal $G^F_{\bbV'}\times G^F_{\bbV''}$-bundle, there is bijection $$p^\ast_2:\mathcal{F}_{G^F_\bbV}(E''^F)\rightarrow\mathcal{F}_{G^F_\bbV\times G^F_{\bbV'}\times G^F_{\bbV''}}(E'^F).$$
At the same time, $|p_3^{-1}(\phi)|<\infty$.
Then we can define the induction map
\begin{eqnarray*}
\ast:\mathcal{F}_{G^F_{\bbV'}}(E^F_{\bbV'})\times\mathcal{F}_{G^F_{\bbV''}}(E^F_{\bbV''})&\rightarrow&\mathcal{F}_{G^F_\bbV}(E^F_\bbV)\\
(f',f'')&\mapsto&f'\ast f''={p_3}_{!}({p_2}^{\ast})^{-1}\ p_1^{\ast}(f'\otimes f'').
\end{eqnarray*}

Let $$\mathcal{F}_{Q}=\bigoplus_{\nu\in\mathbb{N}{Q_0}}\mathcal{F}_{G^F_\bbV}(E^F_\bbV).$$

\begin{proposition}[\cite{Lusztig_Canonical_bases_and_Hall_algebras}]
The induction maps for various $\nu$, $\nu'$, $\nu''$ induce an associative multiplication structure $$\ast:\mathcal{F}_{Q}\times \mathcal{F}_{Q}\rightarrow \mathcal{F}_{Q}$$ sending $(f',f'')$ to $f'\ast f''$.
\end{proposition}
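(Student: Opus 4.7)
The plan is to mimic, in the function setting, the standard proof of associativity for geometric induction, by reducing both $(f_1\ast f_2)\ast f_3$ and $f_1\ast(f_2\ast f_3)$ to a common three-step flag construction. There are two natural ways to carry this out: either deduce the result from the already stated geometric associativity via the sheaf-function correspondence $\chi^F$, or argue directly. I would argue directly, because the proof is elementary and at the same time confirms that $\chi^F$ will be an algebra homomorphism later on.

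First, I would unravel the definition of $\ast$ into a closed formula. Since $p_2$ is a principal $G_{\bbV'}^F\times G_{\bbV''}^F$-bundle and $p_1^\ast(f'\otimes f'')$ is invariant under that group (because $f'$ and $f''$ are themselves invariant), the inverse image $({p_2}^\ast)^{-1}p_1^\ast(f'\otimes f'')$ is the well-defined function on $E''^F$ whose value at $(\phi,\bbW)$ equals $f'(\bar\phi)\,f''(\phi|_{\bbW})$, where $\bar\phi$ denotes the induced action on $\bbV/\bbW$ and $\phi|_{\bbW}$ the restriction to $\bbW$; both are well defined up to isomorphism, which is enough by invariance of $f'$ and $f''$. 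The fibres of $p_3$ are finite (they parametrize $\phi$-stable subspaces of a fixed dimension vector), so $p_{3!}$ is the finite sum over the fibre. Hence
$$(f'\ast f'')(\phi)\;=\;\sum_{\bbW}\,f'(\bar\phi)\,f''(\phi|_{\bbW}),$$
where $\bbW$ ranges over $\phi$-stable graded subspaces of $\bbV$ with $\underline{\dim}\,\bbW=\nu''$.

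Next, for a decomposition $\nu=\nu_1+\nu_2+\nu_3$, I would introduce the three-step flag variety parametrizing pairs $(\phi,\bbW_3\subset\bbU\subset\bbV)$ with both $\bbW_3$ and $\bbU$ being $\phi$-stable of dimension vectors $\nu_3$ and $\nu_2+\nu_3$ respectively, together with the appropriate torsors of isomorphisms to fixed graded spaces $\bbV_1,\bbV_2,\bbV_3$. The same descent-and-sum argument as above gives the corresponding triple induction
$$(f_1,f_2,f_3)\;\mapsto\;\Big(\phi\mapsto\sum_{\bbW_3\subset\bbU\subset\bbV}f_1(\phi_{\bbV/\bbU})\,f_2(\phi_{\bbU/\bbW_3})\,f_3(\phi_{\bbW_3})\Big).$$
Finally, I would expand the two associations: $((f_1\ast f_2)\ast f_3)(\phi)$ is first a sum over $\phi$-stable $\bbW_3\subset\bbV$ of dimension $\nu_3$, and then, via the formula for $f_1\ast f_2$ applied to $\bar\phi$ on $\bbV/\bbW_3$, an inner sum over $\bar\phi$-stable subspaces of $\bbV/\bbW_3$, which lift uniquely to $\phi$-stable $\bbU$ with $\bbW_3\subset\bbU\subset\bbV$. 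Dually, $(f_1\ast(f_2\ast f_3))(\phi)$ is a sum over $\bbU$ followed by a sum over $\bbW_3\subset\bbU$. Both expressions coincide with the triple product, and associativity follows.

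The only real obstacle is the bookkeeping: one must check that the subquotient representations $\phi_{\bbV/\bbU}$, $\phi_{\bbU/\bbW_3}$, $\phi_{\bbW_3}$ evaluated against $f_i$ are independent of the choices of trivializations of the torsors, and that compositions of torsor descents behave correctly. Both points are immediate from $G^F$-invariance of the $f_i$, so there is no substantive difficulty beyond verifying the two parameterizations of two-step flags match.
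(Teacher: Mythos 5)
Your argument is correct. Note, however, that the paper itself offers no proof of this proposition: it is quoted from Lusztig's \emph{Canonical bases and Hall algebras} and used as a black box, so there is no internal argument to compare yours against. What you supply is the classical Hall-algebra proof of associativity, carried out at the level of functions: you first descend $p_1^{\ast}(f'\otimes f'')$ along the principal $G^F_{\bbV'}\times G^F_{\bbV''}$-bundle $p_2$ to get the closed formula $(f'\ast f'')(\phi)=\sum_{\bbW}f'(\bar\phi)\,f''(\phi|_{\bbW})$, and then identify both bracketings of a triple product with the sum over two-step flags $\bbW_3\subset\bbU\subset\bbV$ of $\phi$-stable graded subspaces, using the standard bijection between subspaces of $\bbV/\bbW_3$ and subspaces of $\bbV$ containing $\bbW_3$. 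This is exactly Lusztig's (and, in substance, Ringel's) argument, and all the points you flag as needing care — independence of the chosen trivializations, finiteness of the fibres of $p_3$ over $\mathbb{F}_q$, and the matching of the two flag parameterizations — are indeed the only things to check and are settled by $G^F$-invariance and finiteness of everything at a fixed dimension vector. The one step you pass over silently is that $p_2$ is surjective on $\mathbb{F}_q$-points (every rational $(\phi,\bbW)$ admits rational trivializations $R',R''$), which is what makes $(p_2^{\ast})^{-1}$ genuinely defined on all of $\mathcal{F}_{G^F_{\bbV}}(E''^F)$; this holds because graded $\mathbb{F}_q$-spaces of equal dimension vector are isomorphic, and is implicit in the paper's assertion that $p_2^{\ast}$ is a bijection. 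Your direct proof has the added benefit you mention: it makes the later compatibility of $\chi^F$ with the products transparent.
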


\begin{proposition}[\cite{Schiffmann_Lectures2,XXZ2022254}]
The  map $\chi^F_Q: K_Q\rightarrow \mathcal{F}_Q$ is a homomorphism of $\mathbb{Z}$-algebras.
\end{proposition}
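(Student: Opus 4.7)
The proof reduces to verifying the multiplicativity $\chi^F_Q(\mathcal{L}'\ast\mathcal{L}'')=\chi^F_Q(\mathcal{L}')\ast\chi^F_Q(\mathcal{L}'')$ for $\mathcal{L}'\in\mathcal{D}^b_{G_{\bbV'}}(E_{\bbV'})$ and $\mathcal{L}''\in\mathcal{D}^b_{G_{\bbV''}}(E_{\bbV''})$, since additivity of $\chi^F_Q$ is automatic from additivity of the trace of Frobenius on distinguished triangles. The plan is to propagate $\chi^F$ through the diagram defining the induction, matching each piece with the analogous operation on functions, and then conclude by comparing with the function-theoretic definition of $\ast$.

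The toolkit consists of the standard compatibilities of the sheaf-function correspondence. For any morphism $f$ of schemes over $\mathbb{F}_q$ one has $\chi^F(f^*\mathcal{L})=f^*\chi^F(\mathcal{L})$ and, whenever the fibers of $f$ on $\mathbb{F}_q$-points are finite, $\chi^F(f_!\mathcal{L})=f_!\chi^F(\mathcal{L})$, where the right-hand $f_!$ is summation along $f^{-1}(x)\cap X^F$; moreover $\chi^F(\mathcal{L}'\boxtimes\mathcal{L}'')=\chi^F(\mathcal{L}')\otimes\chi^F(\mathcal{L}'')$. The finiteness required for $p_{3!}$ holds because $p_3^{-1}(\phi)^F$ is the set of $\phi$-stable $\mathbb{F}_q$-subspaces of $\bbV$ of the prescribed dimension vector, which is a finite set, matching the hypothesis $|p_3^{-1}(\phi)|<\infty$ already invoked in the construction of the function-theoretic induction.

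The only non-formal step, and the one I expect to be the main obstacle, is matching the descent functor ${p_2}_\flat$ with $(p_2^*)^{-1}$ at the level of functions on complexes of the form $p_1^*(\mathcal{L}'\boxtimes\mathcal{L}'')$. Since $p_2$ is a principal $G_{\bbV'}\times G_{\bbV''}$-bundle, $p_2^*$ is an equivalence between $G_\bbV$-equivariant complexes on $E''$ and $G_\bbV\times G_{\bbV'}\times G_{\bbV''}$-equivariant complexes on $E'$, and its function-theoretic analogue is a bijection between the corresponding spaces of invariant functions on $\mathbb{F}_q$-points (both statements use surjectivity of $p_2$ together with the constant fiber size $|G^F_{\bbV'}|\cdot|G^F_{\bbV''}|$). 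Combining the pullback identity $\chi^F\circ p_2^*=p_2^*\circ\chi^F$ with these bijectivity statements yields $\chi^F\circ{p_2}_\flat=(p_2^*)^{-1}\circ\chi^F$ on the image of $p_2^*$, which is exactly where $p_1^*(\mathcal{L}'\boxtimes\mathcal{L}'')$ lives. Chaining all the identifications then gives
\[
\chi^F_Q(\mathcal{L}'\ast\mathcal{L}'')=p_{3!}(p_2^*)^{-1}p_1^*\bigl(\chi^F_Q(\mathcal{L}')\otimes\chi^F_Q(\mathcal{L}'')\bigr)=\chi^F_Q(\mathcal{L}')\ast\chi^F_Q(\mathcal{L}''),
\]
which is the desired homomorphism property. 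The technical bookkeeping is to check that every intermediate object lies in the correct equivariant category or invariant function space, so that each identification is legitimate; once set up, no substantive computation remains.
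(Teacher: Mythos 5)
Your argument is correct and is exactly the standard proof of this compatibility: the paper itself states this proposition with a citation to \cite{Schiffmann_Lectures2,XXZ2022254} and supplies no proof, and the argument in those sources is the one you give (propagate $\chi^F$ through the induction diagram using its compatibility with $f^\ast$, $f_!$ and $\boxtimes$, and identify $\chi^F\circ{p_2}_\flat$ with $(p_2^\ast)^{-1}\circ\chi^F$ via injectivity of the function-level $p_2^\ast$). The only point worth making explicit is that surjectivity of $p_2$ on $\mathbb{F}_q$-points, and hence the bijectivity of the function-level $p_2^\ast$ onto invariant functions, rests on Lang's theorem applied to the connected group $G_{\bbV'}\times G_{\bbV''}$.
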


The embedding $\mathbb{Z}[v,v^{-1}]\rightarrow\mathbb{C}$ sending $v$ to $\sqrt{q}$ induces a $\mathbb{Z}[v,v^{-1}]$-module structure on $\mathbb{C}$.
The $\mathbb{Z}$-linear homomorphism
$\chi^F_Q$ induces an epimorphism of $\mathbb{C}$-algebras $$\chi^F_Q:K_Q\otimes_{\mathbb{Z}[v,v^{-1}]}\mathbb{C}\rightarrow\mathcal{F}_Q.$$ 

\subsection{}

Assume that $Q$ is a quiver of finite type in this section.
Let $B_\bbV$ be the set of isomorphism classes $[\mathcal{L}[n](\frac{n}{2})]$, where $\mathcal{L}$ is a simple perverse sheaf in $\mathcal{D}^b_{G_\bbV}(E_\bbV)$ and $n\in\bbZ$.

For any finitely dimensional representation $V$ of $Q$, let $\mathcal{O}_{V}$ be the orbit in $E_{\bbV}$ corresponding to $V$ and $j_V:\mathcal{O}_{V}\rightarrow E_{\bbV}$ the natural embedding. Let $$IC_{\mathcal{O}_{V}}=(j_V)_{!\ast}1_{\mathcal{O}_{V}},$$
where $1_{\mathcal{O}_{V}}$ is the constant sheaf on $\mathcal{O}_{V}$.

\begin{theorem}[\cite{Lusztig_Canonical_bases_arising_from_quantized_enveloping_algebra,Lusztig_Quivers_perverse_sheaves_and_the_quantized_enveloping_algebras,Lusztig_Introduction_to_quantum_groups}]\label{lusztig_canonical}
For any $Q_0$-graded ${K}$-vector space $\bbV$ such that $\underline{\dim}\bbV=\nu$, it holds that
$$B_\bbV=\{[IC_{\mathcal{O}_{V}}[n](\frac{n}{2})]|\underline{\dim}V=\nu, n\in\bbZ\}$$ and the set $B_\bbV$ is a $\mathbb{Z}$-basis of $K_{G_\bbV}(E_\bbV)$. 
\end{theorem}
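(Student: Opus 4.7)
The plan is to reduce Theorem \ref{lusztig_canonical} to the standard classification of $G$-equivariant simple perverse sheaves on a $G$-variety with finitely many orbits and connected stabilizers, after which the basis property follows from a $t$-structure argument on the Grothendieck group.

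First, I would invoke Gabriel's theorem: since $Q$ is of finite type, for every dimension vector $\nu$ there are only finitely many isomorphism classes of representations $V$ of $Q$ with $\underline{\dim}V=\nu$. Combined with Proposition \ref{one-to-one}, this shows that $E_\bbV$ decomposes as a disjoint union of finitely many $G_\bbV$-orbits $\mathcal{O}_V$, one for each such isomorphism class $[V]$. In particular every orbit is locally closed and the stratification $E_\bbV=\bigsqcup_V \mathcal{O}_V$ is a finite algebraic stratification.

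Next, I would check that every stabilizer is connected. The stabilizer of a point $\phi\in E_\bbV$ under $G_\bbV$ equals $\Aut(V_\phi)$, which is the unit group of the finite-dimensional $K$-algebra $\End(V_\phi)$. As a Zariski open subset of the affine space $\End(V_\phi)$, it is a connected algebraic group; in addition $G_\bbV$ itself is connected. With these two ingredients in place I would apply the general theory of equivariant perverse sheaves: on a variety acted upon by a connected algebraic group with finitely many orbits and connected stabilizers, the simple $G$-equivariant perverse sheaves are exactly the intersection cohomology extensions $IC_{\mathcal{O}_V}=(j_V)_{!\ast}1_{\mathcal{O}_V}$ of the orbit closures, because there are no nontrivial $G$-equivariant local systems on the orbits. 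Twisting by the self-dual shifts $[n](\tfrac{n}{2})$ and identifying simple mixed complexes then yields the description
\[
B_\bbV=\{[IC_{\mathcal{O}_V}[n]({\textstyle\frac{n}{2}})]\mid \underline{\dim}V=\nu,\ n\in\bbZ\}.
\]

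Finally, to see that $B_\bbV$ is a $\bbZ$-basis of $K_{G_\bbV}(E_\bbV)$, I would appeal to the perverse $t$-structure on $\mathcal{D}^b_{G_\bbV}(E_\bbV)$: any object admits a finite filtration whose graded pieces are shifts of simple perverse sheaves, so the classes $[IC_{\mathcal{O}_V}[n](\tfrac{n}{2})]$ generate the Grothendieck group over $\bbZ$. Linear independence is obtained by the usual stratification argument: restricting a hypothetical relation to the open stratum among the orbits appearing in its support, and applying perverse cohomology, forces all coefficients to vanish, and one proceeds by Noetherian induction on the closed complement. The main obstacle, and the essential content of the theorem, is the classification step: identifying the stabilizers with $\Aut(V_\phi)$, verifying their connectedness, and importing the general result that simple $G$-equivariant perverse sheaves on such a variety are precisely the $IC$-sheaves of the orbit closures; everything else amounts to tracking the $\bbZ[v,v^{-1}]$-action given by shifts and Tate twists.
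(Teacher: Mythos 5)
The paper gives no proof of this theorem --- it is imported verbatim from Lusztig --- and your argument is exactly the standard one underlying the cited result for finite-type quivers: finitely many orbits by Gabriel's theorem together with Proposition \ref{one-to-one}, connected stabilizers $\Aut(V_\phi)=\End(V_\phi)^\times$ (open in an affine space) forcing trivial equivariant local systems, and the perverse/weight filtration plus the open-stratum restriction argument for generation and independence. The only point worth flagging is a convention: for $B_\bbV$ to be literally a $\mathbb{Z}$-basis with all the shifts $[n](\frac{n}{2})$ counted as distinct elements, $K_{G_\bbV}(E_\bbV)$ must be read in Lusztig's sense as the split Grothendieck group of semisimple (mixed) complexes --- the free $\mathbb{Z}$-module on simple perverse sheaves together with their shifts and Tate twists --- which is what your $t$-structure argument implicitly produces.
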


Let $$B_Q=\bigcup_{\nu\in\mathbb{N}Q_0}B_\bbV,$$ which is a $\mathbb{Z}$-basis of $K_Q$ and called the canonical basis of $K_Q$.

\section{Continuous quivers of type $A$ and their representations}

In this section, we shall recall the definitions of continuous quivers of type $A$. We follow the notations in \cite{Igusa2022Continuous}.

\subsection{}
A continuous quiver $A_\mathbb{R}=(\mathbb{R},\mathbf{S},\prec)$ of type $A$ is consisting of
\begin{enumerate}
\item[(1)] the set $\mathbb{R}$ of real numbers;
\item[(2)] a discrete subset $\mathbf{S}=\{S_i|i=I\}$  of $\mathbb{R}$, where the index set $I$ is one of the following four sets $$\{i\in\mathbb{Z}|m\leq i\leq n\},\{i\in\mathbb{Z}|i\leq n\},\{i\in\mathbb{Z}|i\geq m\} \textrm{ and } \mathbb{Z};$$
\item[(3)] a partial order $\prec$ such that $\prec = <$ or $>$ in every intervals $$[S_i,S_{i+1}],\,(-\infty,S_{m}] \textrm{ and } [S_{n},+\infty).$$
\end{enumerate}


A representation $V= (V(x), V(x,y))$ of $A_\mathbb{R}$ over ${K}$ is consisting of
\begin{enumerate}
\item[(1)] a ${K}$-vector space $V(x)$ for any $x\in\mathbb{R}$;
\item[(2)] a ${K}$-linear map $V(x,y):V(x)\rightarrow V(y)$ for any $x \preccurlyeq y$ such that $$V(y,z)V(x,y)=V(x,z)$$ for any $x\preccurlyeq y\preccurlyeq z$.
\end{enumerate}

The collection of linear maps $f_{x}:V(x)\rightarrow W(x)$ for all $x\in\mathbb{R}$ is called a map from $V$ to $W$, if the following diagram is commutative
$$\xymatrix{
V(x)\ar[d]^-{f_x}\ar[r]^-{V(x,y)}&V(y) \ar[d]^-{f_y}\\
W(x)\ar[r]^-{W(x,y)}&{W(y)}.}$$

Denote by $\textrm{Rep}_{{K}}(A_\mathbb{R})$ the category of ${K}$-linear representations of $A_\mathbb{R}$, and by $\textrm{Rep}^{pwf}_{{K}}(A_\mathbb{R})$ the subcategory of pointwise finitely dimensional representations.
Denote by $\textrm{rep}_{{K}}(A_\mathbb{R})$ the subcategory of finitely generated representations of $A_\mathbb{R}$ defined in \cite{Igusa2022Continuous}. The category $\textrm{rep}_{{K}}(A_\mathbb{R})$ is the category of tame representations of $A_\mathbb{R}$ in \cite{Sala_2021}.

Let $\nu:\mathbb{R}\rightarrow\mathbb{Z}_{\geq0}$ be a function and $V= (V(x), V(x,y))$ be a representation of $A_\mathbb{R}$. If $\dim V(x)=\nu(x)$, then $\nu$ is called the dimension function of $V$ and we denote $\underline{\dim}V=\nu$.

\subsection{}
Fix a dimension function $\nu$ and a $\mathbb{R}$-graded ${K}$-vector space $\bfV=\oplus_{x\in\mathbb{R}}\bfV_x$ such that $\dim\bfV_x=\nu(x)$.
In this case, we also denote $\underline{\dim}\bfV=\nu$.
Let
$$\tilde{E}_\bfV=\tilde{E}_{A_{\mathbb{R}},\bfV}=\prod_{x\preccurlyeq y}\Hom_K(\bfV_{x},\bfV_{y})$$
and
$$E'_\bfV=E'_{A_{\mathbb{R}},\bfV}=
\{\phi=(\phi_{xy})_{x\preccurlyeq y}\in \tilde{E}_\bfV|\phi_{xz}=\phi_{yz}\phi_{xy}\}.$$
Let 
$E_\bfV=E_{A_{\mathbb{R}},\bfV}$ be the subset of $E'_\bfV$ consisting of $\phi$ such that $V=(\bfV,\phi)$ is a finitely generated representation in $\textrm{rep}_{{K}}(A_\mathbb{R})$.

The group
$G_\bfV=\prod_{x\in\mathbb{R}}GL_{K}(\bfV_x)$ acts on $\tilde{E}_\bfV$ by $g.\phi=g\phi g^{-1}$
for any $g\in G_\bfV$ and $\phi\in\tilde{E}_\bfV$.
Note that $G_\bfV(E'_\bfV)\subset E'_\bfV$ and $G_\bfV(E_\bfV)\subset E_\bfV$. Hence there are $G_\bfV$-actions on $E'_\bfV$ and $E_\bfV$.

Similarly to Proposition \ref{one-to-one}, we have the following proposition.

\begin{proposition}
There is an one to one correspondence between the $G_\bfV$-orbits in $E_\bfV$ and the isomorphism classes of finitely generated representations of $A_\mathbb{R}$. 
\end{proposition}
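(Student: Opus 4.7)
The plan is to mimic the standard argument behind Proposition~\ref{one-to-one} verbatim, with the additional bookkeeping that $E_\bfV$ has been carved out of $E'_\bfV$ precisely so that its points parametrize objects of $\textrm{rep}_{K}(A_\mathbb{R})$. First I would set up the dictionary: to any $\phi=(\phi_{xy})\in E_\bfV$ associate the representation $V_\phi=(\bfV,\phi)$, where $V_\phi(x)=\bfV_x$ and $V_\phi(x,y)=\phi_{xy}$ for $x\preccurlyeq y$. The cocycle condition $\phi_{xz}=\phi_{yz}\phi_{xy}$ defining $E'_\bfV$ is exactly the functoriality required of a representation of $A_\mathbb{R}$, and the extra restriction cutting out $E_\bfV\subseteq E'_\bfV$ guarantees $V_\phi\in\textrm{rep}_K(A_\mathbb{R})$ by definition. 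Thus $\phi\mapsto[V_\phi]$ is a well-defined map from $E_\bfV$ to the set of isomorphism classes of finitely generated representations with underlying dimension function $\nu=\underline{\dim}\bfV$.

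Second I would verify that this map is constant on $G_\bfV$-orbits and separates them. If $g=(g_x)\in G_\bfV$ and $\phi'=g\phi g^{-1}$, then the tuple $(g_x)_{x\in\mathbb R}$ is by construction an isomorphism $V_\phi\xrightarrow{\sim} V_{\phi'}$, since $g_y\phi_{xy}=\phi'_{xy}g_x$ for all $x\preccurlyeq y$. Conversely, suppose $f=(f_x):V_\phi\to V_{\phi'}$ is an isomorphism. Because both source and target have underlying graded space $\bfV$, each $f_x$ is an invertible endomorphism of $\bfV_x$, hence $f\in G_\bfV$; the naturality relation $f_y\phi_{xy}=\phi'_{xy}f_x$ then reads $\phi'=f\phi f^{-1}$, so $\phi$ and $\phi'$ lie in the same $G_\bfV$-orbit.

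Third, I would prove surjectivity onto isomorphism classes with dimension function $\nu$. Given any $V\in\textrm{rep}_K(A_\mathbb{R})$ with $\underline{\dim}V=\nu$, choose for each $x\in\mathbb R$ a $K$-linear isomorphism $\iota_x:\bfV_x\xrightarrow{\sim}V(x)$, which exists because $\dim\bfV_x=\nu(x)=\dim V(x)$. Define $\phi_{xy}=\iota_y^{-1}V(x,y)\iota_x$ for $x\preccurlyeq y$. The composition law $V(y,z)V(x,y)=V(x,z)$ transports directly to $\phi_{yz}\phi_{xy}=\phi_{xz}$, so $\phi\in E'_\bfV$; the collection $(\iota_x)$ is then a $K$-linear isomorphism $V_\phi\xrightarrow{\sim}V$, forcing $V_\phi\in\textrm{rep}_K(A_\mathbb{R})$ and therefore $\phi\in E_\bfV$, with $[V_\phi]=[V]$.

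The only genuine point of care, and the step I would double-check in detail, is that ``finitely generated'' really does pass through the isomorphism $V_\phi\cong V$ in the last step, so that the defining condition of $E_\bfV$ is satisfied; this is automatic once one recalls that $\textrm{rep}_K(A_\mathbb{R})$ as defined in \cite{Igusa2022Continuous} is closed under isomorphism in $\textrm{Rep}_K(A_\mathbb{R})$. Apart from this verification, the argument is entirely formal and is a line-by-line analogue of the finite-quiver case (Proposition~\ref{one-to-one}), with the continuous index set $\mathbb R$ and the partial order $\preccurlyeq$ playing no role beyond providing the domain of the cocycle relations.
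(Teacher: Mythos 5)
Your argument is correct and is exactly the standard transport-of-structure argument that the paper invokes implicitly: the paper offers no written proof, only the remark that the statement follows "similarly to Proposition~2.1" for finite quivers, and your three steps (well-definedness via the cocycle condition, orbit--isomorphism matching via $G_\bfV$-conjugation, surjectivity via choosing bases $\iota_x$) are precisely that analogy carried out. Your closing caveat is also the right one to flag, namely that the correspondence is with isomorphism classes of finitely generated representations of the fixed dimension function $\nu=\underline{\dim}\,\bfV$ and that $\textrm{rep}_K(A_{\mathbb R})$ is closed under isomorphism, so the defining condition of $E_\bfV$ is preserved.
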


For any finitely generated representation $V$ of $A_{\mathbb{R}}$, let $\mathcal{O}_{V}$ be the orbit in $E_{\bfV}$ corresponding to $V$.

\subsection{}\label{top}
Let $A_\mathbb{R}=(\mathbb{R},\mathbf{S},\prec)$ be a continuous quiver of type $A$.

A sequence $\mathcal{I}=(I_1,I_2,\ldots,I_n)$ is called a partition of $\mathbb{R}$, if
\begin{enumerate}
\item[(1)] $I_1=(-\infty,a_{1}|$, $I_i=|a_{i-1},a_{i}|$ ($1<i<n$),
$I_n=|a_{n-1},+\infty)$ such that $a_1\leq a_2\leq \cdots\leq a_{n-1}$, where the notation $|$ is $[$, $($, $]$ or $)$;
\item[(2)]  $\mathbb{R}=\cup_{i}I_i$ and $I_i\cap I_{i+1}=\emptyset$  ($1\leq i<n$).
\end{enumerate}

For any partition $\mathcal{I}$ of $\mathbb{R}$, let 
$E_{\bfV,\mathcal{I}}$ be the subset of $E_\bfV$ consisting of $\phi$ such that $\phi_{xy}$ is an isomorphsim for any $x\preccurlyeq y$ in some $I\in\mathcal{I}$.
In \cite{Igusa2022Continuous}, it is proved that
$$E_{\bfV}=\bigcup_{\mathcal{I}}E_{\bfV,\mathcal{I}}.$$
Denote by $\tau_\mathcal{I}$ the embedding of $E_{\bfV,\mathcal{I}}$ in $E_{\bfV}$.

Fix $\phi\in E_{\bfV,\mathcal{I}}$, let $\phi_{yx}=\phi_{xy}^{-1}$ for $x\prec y$ in some $I\in\mathcal{I}$.
For any $x<y$ in some $I\in\mathcal{I}$, there exist $x_0=x<x_1<x_2<\ldots<x_s=y$ such that $x_i\prec x_{i+1}$ or $x_{i+1}\prec x_i$. Let $$\phi_{xy}=\phi_{x_{s-1}x_s}\cdots\phi_{x_1x_2}\phi_{x_0x_1}$$ and $\phi_{yx}=\phi^{-1}_{xy}$.

Let  $\mathcal{I}_1$ and $\mathcal{I}_2$ be two partitions of $\mathbb{R}$. The partition $\mathcal{I}_2$ is called a refinement of $\mathcal{I}_1$ if, for any $I\in\mathcal{I}_2$, there exists an interval $J\in\mathcal{I}_1$ such that $I$ is a subset of $J$. 

Since $E_{\bfV,\mathcal{I}_1}$ is a subset of $E_{\bfV,\mathcal{I}_2}$,
there is a natural embedding
$$\tau_{\mathcal{I}_1,\mathcal{I}_2}:E_{\bfV,\mathcal{I}_1}\rightarrow E_{\bfV,\mathcal{I}_2}.$$

\subsection{}
Let  $\mathcal{I}=(I_1,I_2,\ldots,I_n)$ be a partition of $\mathbb{R}$.
Denote by $I_i\prec I_{i+1}$ in $\mathcal{I}$, if
\begin{enumerate}
\item[(1)] $a_{i}\prec a_{i}+\epsilon$, when
$I_i=|a_{i-1},a_{i}], I_{i+1}=(a_i,a_{i+1}|$,
\item[(2)]  $a_{i}-\epsilon\prec a_{i}$, when
$I_i=|a_{i-1},a_{i}), I_{i+1}=[a_i,a_{i+1}|$,
\end{enumerate}
and by $I_{i+1}\prec I_{i}$ in $\mathcal{I}$, if
\begin{enumerate}
\item[(1)] $a_{i}+\epsilon\prec a_{i}$, when
$I_i=|a_{i-1},a_{i}], I_{i+1}=(a_i,a_{i+1}|$,
\item[(2)]  $a_{i}\prec a_{i}-\epsilon$, when
$I_i=|a_{i-1},a_{i}), I_{i+1}=[a_i,a_{i+1}|$,
\end{enumerate}
where $\epsilon$ is a real number small enough.

Consider a finite quiver $Q_{\mathcal{I}}=(Q_{\mathcal{I},0},Q_{\mathcal{I},1})$ of type $A$ associated to the   continuous quiver $A_{\mathbb{R}}$ and the partition $\mathcal{I}$ of $\mathbb{R}$, where
\begin{enumerate}
\item[(1)] $Q_{\mathcal{I},0}=\{I_1,I_2,\ldots,I_n\}$ is the set of vertices; 
\item[(2)] $Q_{\mathcal{I},1}=\{h_1,h_2,\ldots,h_{n-1}\}$, where
$$h_i=\left\{
\begin{aligned}
I_i\rightarrow{I_{i+1}},  & \qquad  I_i\prec I_{i+1};\\
I_i\leftarrow{I_{i+1}}, & \qquad I_{i+1}\prec I_{i}.
\end{aligned}
\right.$$
\end{enumerate}

For any finite dimensional $\mathbb{R}$-graded ${K}$-vector space $\bfV=\oplus_{x\in\mathbb{R}}\bfV_x$,
let $\bfV_{\mathcal{I}}$ be the $Q_{\mathcal{I},0}$-graded ${K}$-vector space with $(\bfV_{\mathcal{I}})_{I_i}=\bfV_{b_i}$, where
$$b_i=\left\{
\begin{aligned}
a_{1}-1,& \qquad  i=1;\\
\frac{a_i+a_{i-1}}{2},& \qquad 1<i<n;\\
a_{n-1}+1,& \qquad i=n.
\end{aligned}
\right.$$

Define a map 
$$\sigma_{\mathcal{I}}:E_{\bfV,\mathcal{I}}\rightarrow E_{Q_{\mathcal{I}},\bfV_{\mathcal{I}}},$$
by
\begin{enumerate}
\item[(1)]  $\sigma_{\mathcal{I}}(\phi)_{I_i\rightarrow{I_{i+1}}}=\phi_{a_i+\epsilon b_{i+1}}\phi_{a_ia_i+\epsilon}\phi_{b_ia_i}$, 
when 
$I_i=|a_{i-1},a_{i}], I_{i+1}=(a_i,a_{i+1}|$ and $a_{i}\prec a_{i}+\epsilon$ ;
\item[(2)]  $\sigma_{\mathcal{I}}(\phi)_{I_i\leftarrow{I_{i+1}}}=\phi_{a_i b_{i}}\phi_{a_i+\epsilon a_i}\phi_{b_{i+1}a_i+\epsilon}$, 
when 
$I_i=|a_{i-1},a_{i}], I_{i+1}=(a_i,a_{i+1}|$ and $a_{i}+\epsilon\prec a_{i}$;
\item[(3)]  $\sigma_{\mathcal{I}}(\phi)_{I_i\rightarrow{I_{i+1}}}=\phi_{a_i b_{i+1}}\phi_{a_i-\epsilon a_i}\phi_{b_ia_i-\epsilon}$,
when
$I_i=|a_{i-1},a_{i}), I_{i+1}=[a_i,a_{i+1}|$ and $a_{i}-\epsilon\prec a_{i}$;
\item[(4)]  $\sigma_{\mathcal{I}}(\phi)_{I_i\leftarrow{I_{i+1}}}=\phi_{a_i-\epsilon b_{i}}\phi_{a_i a_i-\epsilon}\phi_{b_{i+1}a_i}$, 
when
$I_i=|a_{i-1},a_{i}), I_{i+1}=[a_i,a_{i+1}|$ and $a_{i}\prec a_{i}-\epsilon$.
\end{enumerate}

\begin{proposition}
The map $\sigma_{\mathcal{I}}$ induces an one to one correspondence between the $G_\bfV$-orbits in $E_{\bfV,\mathcal{I}}$ and the $G_{\bfV_{\mathcal{I}}}$-orbits in $E_{Q_{\mathcal{I}},\bfV_{\mathcal{I}}}$. 
\end{proposition}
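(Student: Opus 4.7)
The plan is to establish the orbit correspondence by treating $\sigma_{\mathcal{I}}$ as coming from a ``contraction'' that collapses each interval $I_i\in\mathcal{I}$ onto the basepoint $b_i$. Concretely, I will verify that $\sigma_{\mathcal{I}}$ is equivariant with respect to the natural group homomorphism $\pi:G_{\bfV}\to G_{\bfV_{\mathcal{I}}}$, $g\mapsto(g_{b_i})_{1\le i\le n}$, and then exhibit an explicit construction that recovers an orbit in $E_{\bfV,\mathcal{I}}$ from any given orbit in $E_{Q_{\mathcal{I}},\bfV_{\mathcal{I}}}$, realising the inverse bijection. Equivariance is a routine check: for each of the four cases in the definition of $\sigma_{\mathcal{I}}$, the inner factors of the form $g_{a_i\pm\epsilon}$ and $g_{a_i}$ cancel pairwise when one substitutes $g\phi g^{-1}$ in place of $\phi$, leaving only the outer factors $g_{b_i}$ and $g_{b_{i+1}}$, which yields $\sigma_{\mathcal{I}}(g\phi g^{-1})=\pi(g)\,\sigma_{\mathcal{I}}(\phi)\,\pi(g)^{-1}$. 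In particular, $\sigma_{\mathcal{I}}$ descends to a well-defined map on orbits.

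For \emph{surjectivity on orbits}, I would start from $\psi\in E_{Q_{\mathcal{I}},\bfV_{\mathcal{I}}}$ and produce $\phi\in E_{\bfV,\mathcal{I}}$ with $\sigma_{\mathcal{I}}(\phi)=\psi$. Fix a family of isomorphisms $\alpha_{x}:\bfV_{b_i}\xrightarrow{\sim}\bfV_{x}$ for each $x\in I_i$, with $\alpha_{b_i}=\mathrm{id}$ (such a family exists precisely when $\dim\bfV_{x}=\dim\bfV_{b_i}$ for all $x\in I_i$; otherwise $E_{\bfV,\mathcal{I}}$ is empty in the given dimension and the claim is vacuous). Set $\phi_{xy}=\alpha_{y}\alpha_{x}^{-1}$ whenever $x\preccurlyeq y$ lie in the same $I_i$, and for $x\in I_i$, $y\in I_{i+1}$ with $x\preccurlyeq y$ use the appropriate composition of $\alpha_{\bullet}$'s with $\psi_{h_i}$ or its inverse, transported through the chain described in Section \ref{top}. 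Values across non-adjacent intervals are then forced by the associativity axiom $\phi_{yz}\phi_{xy}=\phi_{xz}$, and the representation so obtained lies in $E_{\bfV}$ because it is an extension of the finitely generated representation $\psi$ of $Q_{\mathcal{I}}$ by finitely many isomorphism-only intervals. A case-by-case check against the four formulas defining $\sigma_{\mathcal{I}}$ gives $\sigma_{\mathcal{I}}(\phi)=\psi$.

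For \emph{injectivity on orbits}, suppose $\phi,\phi'\in E_{\bfV,\mathcal{I}}$ and $\bar g\in G_{\bfV_{\mathcal{I}}}$ satisfies $\bar g\,\sigma_{\mathcal{I}}(\phi)\,\bar g^{-1}=\sigma_{\mathcal{I}}(\phi')$. Using that $\phi_{xb_i}$ and $\phi'_{xb_i}$ are isomorphisms for each $x\in I_i$ (with the convention $\phi_{yx}=\phi_{xy}^{-1}$ from Section \ref{top}), define $g\in G_{\bfV}$ by
\[
g_{x}=\phi'_{b_i x}\,\bar g_{I_i}\,\phi_{x b_i}\qquad(x\in I_i).
\]
When $x,y$ lie in the same interval, $g_{y}\phi_{xy}=\phi'_{xy}g_{x}$ is immediate from the composition rules. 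When $x\in I_i$, $y\in I_{i+1}$ with $x\preccurlyeq y$, unwinding the factorisation of $\phi_{xy}$ through the chain of points used in the definition of $\sigma_{\mathcal{I}}(\phi)_{h_i}$ reduces the required identity to $\bar g_{I_{i+1}}\,\sigma_{\mathcal{I}}(\phi)_{h_i}=\sigma_{\mathcal{I}}(\phi')_{h_i}\,\bar g_{I_i}$, which is the hypothesis. For non-adjacent pairs the identity follows by composing adjacent ones. Hence $g\phi g^{-1}=\phi'$, finishing the bijection.

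The main obstacle is the case analysis forced by the four orientations in the definition of $\sigma_{\mathcal{I}}$: each endpoint $a_i$ between $I_i$ and $I_{i+1}$ can be closed or open and the partial order $\prec$ can flip at $a_i$ in either direction, yielding the four formulas listed. All four configurations must be handled so that the pairwise cancellation in the equivariance step and the unwinding of $\sigma_{\mathcal{I}}(\phi)_{h_i}$ in the injectivity step are performed consistently; in particular, one must track carefully the extension $\phi_{yx}=\phi_{xy}^{-1}$ along chains that cross a source or a sink of $A_{\mathbb{R}}$ and verify that the limit factors indexed by $\epsilon$ do not interfere. Each individual case is a short composition computation, but keeping the four orientation conventions aligned is what requires the most care.
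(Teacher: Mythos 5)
Your proof is correct and takes essentially the same approach as the paper's, whose entire argument is the single assertion that $(\bfV,\phi_1)\cong(\bfV,\phi_2)$ if and only if $(\bfV_{\mathcal{I}},\sigma_{\mathcal{I}}(\phi_1))\cong(\bfV_{\mathcal{I}},\sigma_{\mathcal{I}}(\phi_2))$; your equivariance and injectivity steps are exactly the two directions of that equivalence written out, and you additionally supply the surjectivity-on-orbits construction that the paper leaves implicit. (One small caveat: if $\nu$ fails to be constant on some $I_i$ then $E_{\bfV,\mathcal{I}}$ is empty while $E_{Q_{\mathcal{I}},\bfV_{\mathcal{I}}}$ is not, so the statement in that case is false rather than vacuous as you claim --- but this implicit hypothesis is equally absent from the paper.)
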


\begin{proof}
Let $\phi_1,\phi_2\in E_{\bfV,\mathcal{I}}$. Since $(\bfV,\phi_1)\cong(\bfV,\phi_2)$ if and only if $(\bfV_{\mathcal{I}},\sigma_{\mathcal{I}}(\phi_1))\cong(\bfV_{\mathcal{I}},\sigma_{\mathcal{I}}(\phi_2))$, we get the desired result.

\end{proof}

\section{Geometric realization of Ringel-Hall algebra of $A_{\mathbb{R}}$}
\subsection{}
In this section, we shall recall the definitions of Ringel-Hall algebras of $A_{\mathbb{R}}$ introduced in \cite{Sala_2019,Sala_2021}.

Let $A_\mathbb{R}$ be a continuous quiver.
Fix a dimension function $\nu$ and a $\mathbb{R}$-graded $\mathbb{F}_q$-vector space $\bfV$ such that $\underline{\dim}\bfV=\nu$.
Denote by
$$\mathcal{F}_{G_\bfV}(E_{\bfV})$$
the set of $G_\bfV$-invariant finitely supported $\mathbb{C}$-valued functions on $E_{\bfV}$.
Let $$\mathcal{F}_{A_{\mathbb{R}}}=\bigoplus_{\nu}\mathcal{F}_{G_\bfV}(E_{\bfV}).$$

For dimension functions $\nu,\nu',\nu''$ such that $\nu=\nu'+\nu''$, fix $\mathbb{R}$-graded $\mathbb{F}_q$-vector spaces $\bfV$, $\bfV'$, $\bfV''$ with $\underline{\dim}\bfV=\nu$, $\underline{\dim}\bfV'=\nu'$ and $\underline{\dim}\bfV''=\nu''$.

Consider the following diagram
$$
\xymatrix{\tilde{E}_{\bfV'}\times \tilde{E}_{\bfV''}&\tilde{E}'\ar[l]_-{q_1}\ar[r]^-{q_2}&\tilde{E}''\ar[r]^-{q_3}&\tilde{E}_{\bfV}},
$$
where
\begin{enumerate}
  \item[(1)]$\tilde{E}''=\{(\phi,\bfW)\}$, where $\phi\in \tilde{E}_{\bfV}$ and $\bfW$ is a $\phi$-stable subspace of $\bfV$ with dimension function $\nu''$;
  \item[(2)]$\tilde{E}'=\{(\phi,\bfW,R'',R')\}$, where $(\phi,\bfW)\in \tilde{E}''$, $R'':\bfV''\simeq \bfW$ and $R':\bfV'\simeq{\bfV}/\bfW$;
  \item[(3)]$q_1(\phi,\bfW,R'',R')=(\phi',\phi'')$, where $\phi'=(\phi'_{xy})$ and $\phi''=(\phi''_{xy})$ are induced by the following commutative diagrams
      $$\xymatrix{\bfV'_{x}\ar[r]^-{\phi'_{xy}}\ar[d]^-{R'_{x}}&\bfV'_{y}\ar[d]^-{R'_{y}}\\
      (\bfV/\bfW)_{x}\ar[r]^-{\phi_{xy}}&(\bfV/\bfW)_{y}}$$ and
      $$\xymatrix{\bfV''_{x}\ar[r]^-{\phi''_{xy}}\ar[d]^-{R''_{x}}&\bfV''_{y}\ar[d]^-{R''_{y}}\\
      \bfW_{x}\ar[r]^-{\phi_{xy}}&\bfW_{y}}$$
      for all $x\prec y$;
  \item[(4)]$q_2(\phi,\bfW,R'',R')=(\phi,\bfW)$;
  \item[(5)]$q_3(\phi,\bfW)=\phi$.
\end{enumerate}

The group $G_{\bfV}$ acts on $\tilde{E}''$ by $g.(\phi, \mathbf{W})=(g\phi g^{-1}, g\mathbf{W})$ for any $g\in G_{\bfV}$.
The groups $G_{\bfV'}\times G_{\bfV''}$ and $G_{\bfV}$ act on $\tilde{E}'$ by $(g_1, g_2).(\phi, \mathbf{W}, R'', R')=(\phi, \mathbf{W}, g_2R'', g_1R')$ and $g.(\phi, \mathbf{W}, R'', R')=(g\phi g^{-1}, g\mathbf{W}, R''g^{-1}, R'g^{-1})$ for any $(g_1, g_2)\in G_{\bfV'}\times G_{\bfV''}$ and $g\in G_{\bfV}$.

Since $\phi\in E_{\bfV}$ implies that $\phi'\in E_{\bfV'}$ and $\phi''\in E_{\bfV''}$, we have the following diagram
\begin{equation*}
\xymatrix{E_{\bfV'}\times E_{\bfV''}&E'\ar[l]_-{q_1}\ar[r]^-{q_2}&E''\ar[r]^-{q_3}&E_{\bfV}},
\end{equation*}
where
\begin{enumerate}
  \item[(1)]$E''=\{(\phi,\bfW)\in\tilde{E}''|\phi\in E_{\bfV}\}$;
  \item[(2)]$E'=\{(\phi,\bfW,R'',R')\in\tilde{E}'|(\phi,\bfW)\in{E}''\}$;
  \item[(3)]the restrictions of $q_1$ and $q_2$ on $E'$ are still denoted by $q_1$ and $q_2$, respectively;
  \item[(4)]the restriction of $q_3$ on $E''$ is still denoted by $q_3$.
\end{enumerate}

Since $G_{\bfV}(E'')=E''$, there is a $G_{\bfV}$-action on $E''$.
Similarly, there are $G_{\bfV'}\times G_{\bfV''}$ and $G_{\bfV}$-actions on $E'$.
The map $q_1$ induces a map
$$q^\ast_1:\mathcal{F}_{G_{\bfV'}\times G_{\bfV''}}(E_{\bfV'}\times E_{\bfV''})\rightarrow\mathcal{F}_{G_\bfV\times G_{\bfV'}\times G_{\bfV''}}(E').$$
Since the map $q_2$ is a principal $G_{\bfV'}\times G_{\bfV''}$-bundle, it induces a bijection $$q^\ast_2:\mathcal{F}_{G_\bfV}(E'')\rightarrow\mathcal{F}_{G_\bfV\times G_{\bfV'}\times G_{\bfV''}}(E').$$ 
Since $p_3^{-1}(\phi)$ is a finite set for any $\phi\in E_{\bfV}$, it induces a map
$${p_3}_{!}:\mathcal{F}_{G_\bfV}(E'')\rightarrow\mathcal{F}_{G_\bfV}(E_{\bfV}).$$
Then we can define the induction map
\begin{eqnarray*}
\ast:\mathcal{F}_{G_{\bfV'}}(E_{\bfV'})\times\mathcal{F}_{G_{\bfV''}}(E_{\bfV''})&\rightarrow&\mathcal{F}_{G_\bfV}(E_\bfV)\\
(f',f'')&\mapsto&f'\ast f''={p_3}_{!}(p_2^{\ast})^{-1}p_1^{\ast}(f'\otimes f'').
\end{eqnarray*}

\begin{proposition}[\cite{Sala_2019,Sala_2021}]
The induction maps for various dimension functions $\nu$, $\nu'$, $\nu''$ induce an associative multiplication structure $$\ast:\mathcal{F}_{A_{\mathbb{R}}}\times \mathcal{F}_{A_{\mathbb{R}}}\rightarrow \mathcal{F}_{A_{\mathbb{R}}}$$ sending $(f',f'')$ to $f'\ast f''$.
\end{proposition}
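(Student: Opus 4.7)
The plan is to adapt the classical Ringel--Hall associativity argument to the continuous setting by exhibiting both $(f_1\ast f_2)\ast f_3$ and $f_1\ast(f_2\ast f_3)$ as a single pull-push along a diagram parametrising two-step filtrations.

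First I would fix $\mathbb{R}$-graded $\mathbb{F}_q$-vector spaces $\bfV,\bfV_1,\bfV_2,\bfV_3$ with $\underline{\dim}\bfV=\underline{\dim}\bfV_1+\underline{\dim}\bfV_2+\underline{\dim}\bfV_3$, and form the variety $E''_{123}$ of triples $(\phi,\bfW_1,\bfW_2)$ with $\phi\in E_\bfV$ and $\bfW_2\subset\bfW_1\subset\bfV$ both $\phi$-stable of dimension functions $\underline{\dim}\bfW_2=\underline{\dim}\bfV_3$ and $\underline{\dim}(\bfW_1/\bfW_2)=\underline{\dim}\bfV_2$, together with the corresponding $E'_{123}$ adding isomorphisms $R_1:\bfV_1\simeq \bfV/\bfW_1$, $R_2:\bfV_2\simeq \bfW_1/\bfW_2$, $R_3:\bfV_3\simeq\bfW_2$. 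This sits in a diagram
$$\xymatrix{E_{\bfV_1}\times E_{\bfV_2}\times E_{\bfV_3} & E'_{123} \ar[l]_-{r_1}\ar[r]^-{r_2} & E''_{123}\ar[r]^-{r_3} & E_\bfV}$$
in which $r_2$ is a principal $G_{\bfV_1}\times G_{\bfV_2}\times G_{\bfV_3}$-bundle and $r_3$ has finite fibres.

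Next I would verify that both iterated products coincide with the common function $F:={r_3}_!(r_2^\ast)^{-1}r_1^\ast(f_1\otimes f_2\otimes f_3)$. The two forgetful maps $E''_{123}\to E''$, obtained by forgetting either $\bfW_2$ or $\bfW_1$, factor the $r_i$ through the single-step versions $q_1,q_2,q_3$ of the original diagram, and the resulting Cartesian squares imply that the iterated computation of $(f_1\ast f_2)\ast f_3$ amounts to summing first over pairs $\bfW_2\subset\bfW_1$ and then over $\bfW_1\subset\bfV$, while $f_1\ast(f_2\ast f_3)$ amounts to summing first over $\bfW_1\subset\bfV$ and then over $\bfW_2\subset\bfW_1$; both yield $F$. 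This is the same formal calculation as for $\mathcal{F}_Q$ in the finite case recalled above, with $A_\mathbb{R}$ in place of the underlying quiver and $\phi$-stability referring to all maps $\phi_{xy}$ with $x\preccurlyeq y$.

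The hard part will be ensuring that the continuous setting does not spoil the two-step flag construction. Two issues need attention: (i) the fibres $r_3^{-1}(\phi)$ must be finite so that ${r_3}_!$ sends finitely supported functions to finitely supported functions; and (ii) $E_\bfV$ must be closed under passing to $\phi$-stable subrepresentations and quotients of the prescribed dimension function, so that $r_1$ actually lands in $E_{\bfV_1}\times E_{\bfV_2}\times E_{\bfV_3}$. For (i), each $(\bfV,\phi)\in E_\bfV$ is a finitely generated representation of $A_\mathbb{R}$ over $\mathbb{F}_q$, hence has finite total dimension, so only finitely many graded subspaces of any prescribed dimension function exist. For (ii), the closure of $\mathrm{rep}_K(A_\mathbb{R})$ under sub- and quotient representations is a standard property of finitely generated representations that I would cite from \cite{Igusa2022Continuous}. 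Once these points are in place, the associativity identity follows by the same calculation as in the finite-quiver case.
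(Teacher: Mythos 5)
Your overall strategy is the right one, and in fact the paper gives no proof of this proposition at all: it is quoted from Sala--Schiffmann \cite{Sala_2019,Sala_2021}. The two-step flag diagram parametrising $\phi$-stable chains $\bfW_2\subset\bfW_1\subset\bfV$, with both iterated products identified with the single pull--push $F={r_3}_!(r_2^\ast)^{-1}r_1^\ast(f_1\otimes f_2\otimes f_3)$ via the two forgetful maps and base change, is exactly the standard Ringel--Hall argument and transfers verbatim once your points (i) and (ii) are secured; these are precisely the two facts the paper itself takes for granted in defining the product (``$p_3^{-1}(\phi)$ is a finite set'' and ``$\phi\in E_{\bfV}$ implies $\phi'\in E_{\bfV'}$ and $\phi''\in E_{\bfV''}$'').

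However, your justification of (i) is wrong as stated. A finitely generated representation of $A_{\mathbb{R}}$ does \emph{not} have finite total dimension: it is only pointwise finite-dimensional, and is typically supported on an uncountable subset of $\mathbb{R}$ (the interval module supported on $[0,1]$ is generated by a single element, yet $\bigoplus_{x\in[0,1]}V(x)$ is uncountable-dimensional). So you cannot conclude finiteness of the set of $\phi$-stable graded subspaces by counting subspaces of a finite-dimensional space over $\mathbb{F}_q$. The correct argument goes through the partitions of Section 3: since $E_{\bfV}=\bigcup_{\mathcal{I}}E_{\bfV,\mathcal{I}}$, choose a finite partition $\mathcal{I}$ with $\phi\in E_{\bfV,\mathcal{I}}$; on each interval $I\in\mathcal{I}$ the maps $\phi_{xy}$ identify all the $\bfV_x$, $x\in I$, with a single finite-dimensional space $U_I$, and a $\phi$-stable graded subspace of prescribed dimension function restricts on $I$ to a monotone family of subspaces of $U_I$ with at most $\dim U_I$ jumps, whose jump locus is dictated by $\nu''$. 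Hence such a subspace is determined by finitely many subspaces of the finitely many $U_I$, giving a finite fibre over $\mathbb{F}_q$. With that repair, and with the closure of $\mathrm{rep}_K(A_{\mathbb{R}})$ under the relevant subobjects and quotients cited from \cite{Igusa2022Continuous} (or \cite{Sala_2021}, where the category of tame representations is set up for exactly this purpose), your proof is complete.
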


The $\mathbb{C}$-algebra $(\mathcal{F}_{A_{\mathbb{R}}},\ast)$ is called the Ringel-Hall algebra of $A_{\mathbb{R}}$.

For any partition $\mathcal{I}$ of $\mathbb{R}$,
let
$$\mathcal{F}_{G_{\bfV}}(E_{\bfV,\mathcal{I}})$$
be the set of $G_\bfV$-invariant finitely supported $\mathbb{C}$-valued functions on $E_{\bfV,\mathcal{I}}$ and $$\mathcal{F}_{A_{\mathbb{R},\mathcal{I}}}=\bigoplus_{\nu}\mathcal{F}_{G_{\bfV}}(E_{\bfV,\mathcal{I}}).$$

\begin{lemma}\label{subalg_I}
The subset $\mathcal{F}_{A_{\mathbb{R},\mathcal{I}}}$ is a subalgebra of $\mathcal{F}_{A_\mathbb{R}}$.
\end{lemma}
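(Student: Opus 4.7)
The plan is to verify directly that $\mathcal{F}_{A_{\mathbb{R},\mathcal{I}}}$, viewed as the subspace of functions in $\mathcal{F}_{A_\mathbb{R}}$ supported on the union $\bigsqcup_\nu E_{\bfV,\mathcal{I}}$, is closed under the convolution product $\ast$. Fix $f' \in \mathcal{F}_{G_{\bfV'}}(E_{\bfV',\mathcal{I}})$ and $f'' \in \mathcal{F}_{G_{\bfV''}}(E_{\bfV'',\mathcal{I}})$. Unwinding the definition of $\ast$, the value $(f'\ast f'')(\phi)$ is a weighted sum (coming from $(q_3)_!(q_2^{\ast})^{-1}q_1^{\ast}$) over the fibre $q_3^{-1}(\phi)$, i.e., over $\phi$-stable subspaces $\bfW \subset \bfV$ of dimension function $\nu''$, of the products $f'(\phi')f''(\phi'')$, where $\phi''$ is the restriction of $\phi$ to $\bfW$ transported via $R''$ and $\phi'$ is the induced map on $\bfV/\bfW$ transported via $R'$. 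Therefore it suffices to show: if $\bfW$ is $\phi$-stable and the resulting $\phi'$, $\phi''$ both lie in their $\mathcal{I}$-subsets, then $\phi \in E_{\bfV,\mathcal{I}}$.

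Fix any interval $I \in \mathcal{I}$ and any $x \preccurlyeq y$ in $I$. The $\phi$-stability of $\bfW$ gives a short exact sequence of graded vector spaces $0 \to \bfW \to \bfV \to \bfV/\bfW \to 0$, which componentwise produces a commutative diagram with exact rows
$$
\xymatrix{
0 \ar[r] & \bfW_x \ar[r]\ar[d]^-{\phi_{xy}|_{\bfW}} & \bfV_x \ar[r]\ar[d]^-{\phi_{xy}} & (\bfV/\bfW)_x \ar[r]\ar[d]^-{\bar{\phi}_{xy}} & 0 \\
0 \ar[r] & \bfW_y \ar[r] & \bfV_y \ar[r] & (\bfV/\bfW)_y \ar[r] & 0.
}
$$
Since $R''$ and $R'$ are isomorphisms of $\mathbb{R}$-graded vector spaces, they identify $\phi_{xy}|_{\bfW}$ with $\phi''_{xy}$ and $\bar{\phi}_{xy}$ with $\phi'_{xy}$; by hypothesis both of these are isomorphisms on the interval $I$. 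The five lemma then forces $\phi_{xy}:\bfV_x \to \bfV_y$ to be an isomorphism as well. Since $I$ and $x,y$ were arbitrary, $\phi \in E_{\bfV,\mathcal{I}}$.

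Combined with the observation that $\mathcal{F}_{A_{\mathbb{R},\mathcal{I}}}$ is manifestly a $\mathbb{C}$-linear subspace of $\mathcal{F}_{A_\mathbb{R}}$ (closed under $G_\bfV$-invariance and finite support) containing the unit (the delta function at the zero representation), this proves it is a subalgebra. I do not expect any significant obstacle: the whole argument is a matter of translating the support condition across the convolution diagram, and the homological input is the elementary five lemma. The only point requiring mild care is ensuring that transport via the isomorphisms $R'$ and $R''$ respects the \emph{interval-wise} isomorphism condition defining $E_{?,\mathcal{I}}$, which it does because $R'$ and $R''$ are graded vector space isomorphisms whose constituent components commute with the diagrams defining $\phi'$ and $\phi''$.
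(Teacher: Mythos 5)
Your proposal is correct and follows essentially the same route as the paper: both reduce the lemma to the claim that if the induced $\phi'$ and $\phi''$ lie in $E_{\bfV',\mathcal{I}}$ and $E_{\bfV'',\mathcal{I}}$ then $\phi\in E_{\bfV,\mathcal{I}}$, which the paper merely asserts while you justify it with the short five lemma applied to $0\to\bfW_x\to\bfV_x\to(\bfV/\bfW)_x\to0$. Your version is the more complete write-up of the same argument.
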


\begin{proof}
Note that $p_1(\phi,\bfW,R'',R')=(\phi',\phi'')$ and $(\phi',\phi'')\in E_{\bfV',\mathcal{I}}\times E_{\bfV'',\mathcal{I}}$ implies that $\phi\in E_{\bfV,\mathcal{I}}$.
Hence,
for any $f\in\mathcal{F}_{G_{\bfV'}}(E_{\bfV',\mathcal{I}})$ and $g\in\mathcal{F}_{G_{\bfV''}}(E_{\bfV'',\mathcal{I}})$, we have $f\ast g\in\mathcal{F}_{G_{\bfV}}(E_{\bfV,\mathcal{I}})$.

\end{proof}

Let  $\mathcal{I}_1$ and $\mathcal{I}_2$ be two partitions of $\mathbb{R}$ such that  $\mathcal{I}_2$ is a refinement of $\mathcal{I}_1$.
The embedding $$\tau_{\mathcal{I}_1,\mathcal{I}_2}:E_{\bfV,\mathcal{I}_1}\rightarrow E_{\bfV,\mathcal{I}_2}$$
induces a map
$$(\tau_{\mathcal{I}_1,\mathcal{I}_2})_!:\mathcal{F}_{G_{\bfV}}(E_{\bfV,\mathcal{I}_1})\rightarrow \mathcal{F}_{G_{\bfV}}(E_{\bfV,\mathcal{I}_2}).$$

\begin{corollary}\label{cor_sub_alg}
The algebra $\mathcal{F}_{A_{\mathbb{R},\mathcal{I}_1}}$ is a subalgebra of $\mathcal{F}_{A_{\mathbb{R},\mathcal{I}_2}}$ and $(\tau_{\mathcal{I}_1,\mathcal{I}_2})_!$ is an embedding of algebras.
\end{corollary}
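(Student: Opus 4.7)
The plan is to reduce the corollary to Lemma~\ref{subalg_I} by establishing the set-theoretic inclusion $E_{\bfV,\mathcal{I}_1} \subset E_{\bfV,\mathcal{I}_2}$ underlying $\tau_{\mathcal{I}_1,\mathcal{I}_2}$, and then invoking the fact that both $\mathcal{F}_{A_{\mathbb{R},\mathcal{I}_1}}$ and $\mathcal{F}_{A_{\mathbb{R},\mathcal{I}_2}}$ sit as subalgebras of a common ambient algebra $\mathcal{F}_{A_\mathbb{R}}$.

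First I would verify $E_{\bfV,\mathcal{I}_1} \subset E_{\bfV,\mathcal{I}_2}$. Since $\mathcal{I}_2$ refines $\mathcal{I}_1$, every interval of $\mathcal{I}_2$ lies inside some interval of $\mathcal{I}_1$; hence the defining condition of $E_{\bfV,\mathcal{I}_1}$, that $\phi_{xy}$ be invertible for all $x \preccurlyeq y$ in a common interval of $\mathcal{I}_1$, is strictly stronger than the corresponding condition for $\mathcal{I}_2$. Both subsets are $G_\bfV$-stable, so $\tau_{\mathcal{I}_1,\mathcal{I}_2}$ is a $G_\bfV$-equivariant inclusion, and extension by zero produces an injective linear map $(\tau_{\mathcal{I}_1,\mathcal{I}_2})_! : \mathcal{F}_{G_\bfV}(E_{\bfV,\mathcal{I}_1}) \hookrightarrow \mathcal{F}_{G_\bfV}(E_{\bfV,\mathcal{I}_2})$.

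Second, I would identify, via further extension by zero from $E_{\bfV,\mathcal{I}_i}$ to $E_\bfV$, each space $\mathcal{F}_{A_{\mathbb{R},\mathcal{I}_i}}$ with the subspace of $\mathcal{F}_{A_\mathbb{R}}$ of functions supported on $\bigsqcup_\nu E_{\bfV,\mathcal{I}_i}$. By Lemma~\ref{subalg_I} these subspaces are closed under $\ast$, hence subalgebras of $\mathcal{F}_{A_\mathbb{R}}$. Under these identifications $(\tau_{\mathcal{I}_1,\mathcal{I}_2})_!$ becomes the tautological inclusion $\mathcal{F}_{A_{\mathbb{R},\mathcal{I}_1}} \hookrightarrow \mathcal{F}_{A_{\mathbb{R},\mathcal{I}_2}}$, and both convolutions are restrictions of the single operation $\ast$ on $\mathcal{F}_{A_\mathbb{R}}$; so the inclusion is automatically an embedding of algebras.

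Since the whole argument is bookkeeping on top of Lemma~\ref{subalg_I}, there is no substantive obstacle. The only point one must watch is that the \emph{intrinsic} convolution on $\mathcal{F}_{G_\bfV}(E_{\bfV,\mathcal{I}_i})$, read off from the restriction of the fibered diagram $E_{\bfV'} \times E_{\bfV''} \leftarrow E' \to E'' \to E_\bfV$ to the $\mathcal{I}_i$-stratum, agrees with the restriction of $\ast$ from $\mathcal{F}_{A_\mathbb{R}}$; this is precisely the content of the lemma, since the implication $(\phi', \phi'') \in E_{\bfV',\mathcal{I}_i} \times E_{\bfV'',\mathcal{I}_i} \Rightarrow \phi \in E_{\bfV,\mathcal{I}_i}$ is exactly what is needed for the two descriptions to coincide.
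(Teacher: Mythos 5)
Your proposal is correct and matches the paper's (implicit) argument: the corollary is exactly the observation that, since $\mathcal{I}_2$ refines $\mathcal{I}_1$, one has $E_{\bfV,\mathcal{I}_1}\subset E_{\bfV,\mathcal{I}_2}$, so both function spaces sit inside $\mathcal{F}_{A_\mathbb{R}}$ as subalgebras by Lemma~\ref{subalg_I}, with $(\tau_{\mathcal{I}_1,\mathcal{I}_2})_!$ the tautological inclusion. The paper offers no further proof, and your bookkeeping (including the check that the convolution on each $\mathcal{F}_{A_{\mathbb{R},\mathcal{I}_i}}$ is the restriction of the ambient $\ast$) supplies precisely what is needed.
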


\subsection{}

Let $Q_{\mathcal{I}}=(Q_{\mathcal{I},0},Q_{\mathcal{I},1})$ be the finite quiver associated to the continuous quiver $A_{\mathbb{R}}$ and a partition $\mathcal{I}$ and ${\bfV}_{\mathcal{I}}$ be the $Q_{\mathcal{I},0}$-graded vector space induced by $\bfV$.

The map 
$$\sigma_{\mathcal{I}}:E_{\bfV,\mathcal{I}}\rightarrow E_{Q_{\mathcal{I}},\bfV_{\mathcal{I}}}$$
induces a map
$$\sigma^\ast_{\mathcal{I}}:\mathcal{F}_{G_{\bfV_{\mathcal{I}}}}(E_{Q_{\mathcal{I}},\bfV_{\mathcal{I}}})
\rightarrow
\mathcal{F}_{G_{\bfV}}(E_{\bfV,\mathcal{I}}).
$$

\begin{lemma}\label{iso_I_1}
The map $\sigma^\ast_{\mathcal{I}}$ is an isomorphism of algebras.
\end{lemma}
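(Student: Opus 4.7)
The plan is to check two things: that $\sigma^\ast_\mathcal{I}$ is a bijection of vector spaces, and that it respects the Hall product.

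First I would verify that $\sigma_\mathcal{I}$ is equivariant for the natural surjective group homomorphism $\pi:G_\bfV\to G_{\bfV_\mathcal{I}}$, $(g_x)_{x\in\mathbb{R}}\mapsto (g_{b_i})_i$, by a direct computation from $(g.\phi)_{xy}=g_y\phi_{xy}g_x^{-1}$ and the explicit formula defining $\sigma_\mathcal{I}$, yielding $\sigma_\mathcal{I}(g.\phi)=\pi(g).\sigma_\mathcal{I}(\phi)$. Hence $\sigma^\ast_\mathcal{I}$ is well defined on invariant functions. Bijectivity then follows from the preceding proposition, which identifies the two orbit sets: a $G$-invariant finitely supported function on either side is determined by its finitely many nonzero values on orbits, so the bijection of orbits induces a bijection of the corresponding function spaces.

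Next I would address multiplicativity. I would build a commutative diagram linking the two induction correspondences,
$$
\xymatrix{
E_{\bfV',\mathcal{I}}\times E_{\bfV'',\mathcal{I}}\ar[d]_-{\sigma'_\mathcal{I}\times\sigma''_\mathcal{I}} & \widetilde{E}'_\mathcal{I}\ar[l]_-{q_1}\ar[r]^-{q_2}\ar[d] & \widetilde{E}''_\mathcal{I}\ar[r]^-{q_3}\ar[d] & E_{\bfV,\mathcal{I}}\ar[d]^-{\sigma_\mathcal{I}}\\
E_{Q_\mathcal{I},\bfV'_\mathcal{I}}\times E_{Q_\mathcal{I},\bfV''_\mathcal{I}} & E'\ar[l]_-{p_1}\ar[r]^-{p_2} & E''\ar[r]^-{p_3} & E_{Q_\mathcal{I},\bfV_\mathcal{I}},
}
$$
in which $\widetilde{E}'_\mathcal{I},\widetilde{E}''_\mathcal{I}$ denote the restrictions of the continuous correspondence to preimages of $E_{\bfV,\mathcal{I}}$, and the vertical arrows send a $\phi$-stable subspace $\bfW\subset\bfV$ to the subspace $\bfW_\mathcal{I}$ defined by $(\bfW_\mathcal{I})_{I_i}=\bfW_{b_i}$, and each splitting $R',R''$ to its restriction at the marked points. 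The key geometric observation is that, for $\phi\in E_{\bfV,\mathcal{I}}$, the correspondence $\bfW\mapsto\bfW_\mathcal{I}$ gives a bijection between $\phi$-stable $\mathbb{R}$-graded subspaces of $\bfV$ of dimension function $\nu''$ and $\sigma_\mathcal{I}(\phi)$-stable $Q_{\mathcal{I},0}$-graded subspaces of $\bfV_\mathcal{I}$ of dimension vector $\nu''_\mathcal{I}$; the inverse sends $\bfU\subset\bfV_\mathcal{I}$ to $\bfW$ with $\bfW_x=\phi_{b_i,x}(\bfU_{I_i})$ for $x\in I_i$, which is well defined and $\phi$-stable because $\phi$ is an isomorphism inside every interval.

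Once the diagram is in place, I would conclude by standard base change: the vertical maps identify the two correspondences up to the surjection $\pi$, and since $q_2$ and $p_2$ are principal bundles for the respective groups, the pull-push of $G_{\bfV_\mathcal{I}}$-invariant functions along the lower row matches, after pullback by $\sigma^\ast_\mathcal{I}$, the pull-push of $G_\bfV$-invariant functions along the upper row, giving $\sigma^\ast_\mathcal{I}(f\ast g)=\sigma^\ast_\mathcal{I}(f)\ast\sigma^\ast_\mathcal{I}(g)$. The main obstacle is the handling of the trivialization data $R',R''$, which are families of pointwise isomorphisms at every $x\in\mathbb{R}$ on the continuous side but only at the finitely many points $b_i$ on the finite side. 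The extra pointwise data is absorbed by the kernel of $\pi$, which acts freely on the fibres of the central vertical arrows because $\phi_{xy}$ is an isomorphism within each interval of $\mathcal{I}$; verifying that this action is compatible with the principal bundle structures of $q_2$ and $p_2$, so that the base-change formalism actually produces the desired identity, will be the main technical step.
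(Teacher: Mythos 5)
Your proposal is correct and follows essentially the same route as the paper: both arguments rest on the bijection between $\phi$-stable $\mathbb{R}$-graded subspaces $\bfW\subset\bfV$ and $\sigma_{\mathcal{I}}(\phi)$-stable subspaces of $\bfV_{\mathcal{I}}$ (the paper's map $\delta_1$), followed by base change through a commutative comparison of the two induction correspondences. The only cosmetic difference is that the paper inserts an intermediate row (the hybrid spaces $X$ and $Y$) to break your single vertical comparison into individual Cartesian squares and principal-bundle identifications, which is exactly the bookkeeping you flag as the ``main technical step.''
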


\begin{proof}
Consider the following commutative diagram
\begin{equation*}
\xymatrix{E_{\bfV',\mathcal{I}}\times E_{\bfV'',\mathcal{I}}\ar[d]^-{\sigma_{\mathcal{I}}\times\sigma_{\mathcal{I}}}&E_1'\ar[d]^-{\gamma_1}\ar[l]_-{q_1}\ar[r]^-{q_2}&E_1''\ar[d]^-{\delta_1}\ar[r]^-{q_3}&E_{\bfV,\mathcal{I}}\ar[d]^-{id}\\
E_{\bfV'_\mathcal{I}}\times E_{\bfV''_\mathcal{I}}\ar[d]^-{id\times id}&X\ar[d]^-{\gamma_2}\ar[l]_-{\alpha_1}\ar[r]^-{\alpha_2}&Y\ar[d]^-{\delta_2}\ar[r]^-{\alpha_3}&E_{\bfV,\mathcal{I}}\ar[d]^-{\sigma_{\mathcal{I}}}\\
E_{\bfV'_\mathcal{I}}\times E_{\bfV''_\mathcal{I}}&E_2'\ar[l]_-{p_1}\ar[r]^-{p_2}&E_2''\ar[r]^-{p_3}&E_{\bfV_\mathcal{I}}.}
\end{equation*}
Here,
\begin{enumerate}
  \item[(1)]$E_1''=\{(\phi,\bfW)\}$, where $\phi\in E_{\bfV,\mathcal{I}}$ and $\bfW$ is a $\phi$-stable subspace of $\bfV$ with dimension function $\nu''$;
  \item[(2)]$E_1'=\{(\phi,\bfW,R'',R')\}$, where $(\phi,\bfW)\in E_1''$, $R'':\bfV''\simeq \bfW$ and $R':\bfV'\simeq{\bfV}/\bfW$;
  \item[(3)]$E_2''=\{(\phi,\bbW)\}$, where $\phi\in E_{\bfV_\mathcal{I}}$ and $\bbW$ is a $\phi$-stable subspace of $\bfV_\mathcal{I}$ with dimension vector $\nu''_\mathcal{I}$;
  \item[(4)]$E_2'=\{(\phi,\bbW,R'',R')\}$, where $(\phi,\bbW)\in E_1''$, $R'':\bfV''_\mathcal{I}\simeq\bbW$ and $R':\bfV'_\mathcal{I}\simeq{\bfV_\mathcal{I}}/\bbW$;
  \item[(5)]$Y=\{(\phi,\bbW)\}$, where $\phi\in E_{\bfV,\mathcal{I}}$ and $\bbW$ is an $\sigma_{\mathcal{I}}(\phi)$-stable subspace of $\bfV_\mathcal{I}$ with dimension function $\nu''_\mathcal{I}$;
  \item[(6)]$X=\{(\phi,\bbW,R'',R')\}$, where $(\phi,\bbW)\in Y$, $R'':\bfV''_\mathcal{I}\simeq\bbW$ and $R':\bfV'_\mathcal{I}\simeq{\bfV_\mathcal{I}}/\bbW$;
  \item[(7)]$\delta_1(\phi,\bfW)=(\phi,\bfW_{\mathcal{I}})$, which is a bijection;
  \item[(8)]$\gamma_1(\phi,\bfW,R'',R')=(\phi,\bfW_{\mathcal{I}},R''_{\mathcal{I}},R'_{\mathcal{I}})$, where $R_{\mathcal{I}}'':\bfV''_\mathcal{I}\simeq\bfW_{\mathcal{I}}$ and $R_{\mathcal{I}}':\bfV'_\mathcal{I}\simeq{\bfV_\mathcal{I}}/\bfW_{\mathcal{I}}$ are indeced by $R''$ and $R'$;
  \item[(9)]$\delta_2(\phi,\bbW)=(\sigma_{\mathcal{I}}(\phi),\bbW)$;
  \item[(10)]$\gamma_2(\phi,\bbW,R'',R')=(\sigma_{\mathcal{I}}(\phi),\bbW,R'',R')$;
  \item[(11)]$\alpha_1=p_1\gamma_2$;
  \item[(12)]$\alpha_2(\phi,\bbW,R'',R')=(\phi,\bbW)$;
  \item[(13)]$\alpha_3(\phi,\bbW)=\phi$.
\end{enumerate}

Now, for any $f\in\mathcal{F}_{G_{\bfV'_\mathcal{I}}}(E_{\bfV'_\mathcal{I}})$
and $g\in\mathcal{F}_{G_{\bfV''_\mathcal{I}}}(E_{\bfV''_\mathcal{I}})$, let
$A=\sigma^\ast_{\mathcal{I}}(f\ast g)$.
By definition, we have $$A=\sigma^\ast_{\mathcal{I}}{p_3}_{!}({p_2^\ast})^{-1}p_1^{\ast}(f\otimes g).$$
Since the commutative diagram
$$
\xymatrix{
Y\ar[d]^-{\delta_2}\ar[r]^-{\alpha_3}&E_{\bfV,\mathcal{I}}\ar[d]^-{\sigma_{\mathcal{I}}}\\
E_2''\ar[r]^-{p_3}&E_{\bfV_\mathcal{I}}}
$$
is a fiber product, we have
$$A={\alpha_3}_{!}\delta_2^\ast({p_2^\ast})^{-1}p_1^{\ast}(f\otimes g).$$
Then,
$$
A={\alpha_3}_{!}({\alpha_2^\ast})^{-1}\gamma_2^\ast p_1^{\ast}(f\otimes g)
={\alpha_3}_{!}({\alpha_2^\ast})^{-1}\alpha_1^{\ast}(f\otimes g).
$$
Since $\delta_1$ is a bijection, we have $\delta^\ast_1=(\delta_{1!})^{-1}$ and
$$
A
={q_3}_{!}\delta_1^\ast({\alpha_2^\ast})^{-1}\alpha_1^{\ast}(f\otimes g).
$$
Then, 
$$
A
={q_3}_{!}({q_2^\ast})^{-1}\gamma_1^\ast \alpha_1^{\ast}(f\otimes g)
={q_3}_{!}({q_2^\ast})^{-1}q_1^{\ast}(\sigma^\ast_{\mathcal{I}}(f)\otimes \sigma^\ast_{\mathcal{I}}(g))
=\sigma^\ast_{\mathcal{I}}(f)\ast\sigma^\ast_{\mathcal{I}}(g).
$$

Hence, the map $\sigma^\ast_{\mathcal{I}}$ is an isomorphism of algebras.

\end{proof}

Let $\bbV_{\mathcal{I}}$ be a $Q_{\mathcal{I},0}$-graded $K$-vector space such that $\bbV_{\mathcal{I}}^F={\bfV}_{\mathcal{I}}$. Then, we have $E^F_{\bbV_{\mathcal{I}}}=E_{\bfV_\mathcal{I}}$ and $G^F_{\bbV_{\mathcal{I}}}=G_{\bfV_\mathcal{I}}$.
Consider the trace map
$$\chi_{\mathcal{I}}:K_{G_{{\bbV_{\mathcal{I}}}}}^b(E_{\bbV_{\mathcal{I}}})\rightarrow \mathcal{F}_{G_{\bfV_\mathcal{I}}}(E_{\bfV_\mathcal{I}})\rightarrow\mathcal{F}_{G_\bfV}(E_{\bfV,\mathcal{I}})\rightarrow\mathcal{F}_{G_\bfV}(E_\bfV)$$
sending $[\mathcal{L}]$ to $\sigma^\ast_{\mathcal{I}}(\chi^{F}_\mathcal{L})$.

\begin{proposition}
Considering all dimension vectors, the map $$\chi_{\mathcal{I}}: K_{Q_\mathcal{I}}\rightarrow \mathcal{F}_{A_{\mathbb{R}}}$$ is a homomorphism of $\mathbb{Z}$-algebras.
\end{proposition}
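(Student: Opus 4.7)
The strategy is to recognize $\chi_{\mathcal{I}}$ as a three-step composition and check that each factor preserves multiplication, then assemble the statement over all dimension vectors. On each summand (fixing $\nu$, $\bfV$ of dimension function $\nu$, and a $\overline{\mathbb{F}_q}$-lift $\bbV_{\mathcal{I}}$ of $\bfV_{\mathcal{I}}$), the definition of $\chi_{\mathcal{I}}$ factors as
\[
K_{G_{\bbV_{\mathcal{I}}}}(E_{\bbV_{\mathcal{I}}}) \xrightarrow{\chi^F} \mathcal{F}_{G_{\bfV_{\mathcal{I}}}}(E_{\bfV_{\mathcal{I}}}) \xrightarrow{\sigma^\ast_{\mathcal{I}}} \mathcal{F}_{G_\bfV}(E_{\bfV,\mathcal{I}}) \xrightarrow{(\tau_{\mathcal{I}})_!} \mathcal{F}_{G_\bfV}(E_\bfV),
\]
and summing over $\nu$ yields the composition
\[
K_{Q_{\mathcal{I}}} \xrightarrow{\chi^F_{Q_{\mathcal{I}}}} \mathcal{F}_{Q_{\mathcal{I}}} \xrightarrow{\sigma^\ast_{\mathcal{I}}} \mathcal{F}_{A_{\mathbb{R},\mathcal{I}}} \xrightarrow{(\tau_{\mathcal{I}})_!} \mathcal{F}_{A_\mathbb{R}}.
\]

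Next I would invoke three results already in hand: (i) $\chi^F_{Q_{\mathcal{I}}}$ is a $\mathbb{Z}$-algebra homomorphism by the sheaf-function proposition recalled in Section~2 (Lusztig, Schiffmann, Xiao--Xu--Zhao); (ii) $\sigma^\ast_{\mathcal{I}}$ is an isomorphism of algebras by Lemma~\ref{iso_I_1}; (iii) the extension-by-zero map $(\tau_{\mathcal{I}})_!$ is an algebra embedding $\mathcal{F}_{A_{\mathbb{R},\mathcal{I}}} \hookrightarrow \mathcal{F}_{A_\mathbb{R}}$ by Lemma~\ref{subalg_I}, whose content is precisely that if the sub- and quotient-representations lie in $E_{\bfV'',\mathcal{I}}$ and $E_{\bfV',\mathcal{I}}$ respectively, then the total representation automatically lies in $E_{\bfV,\mathcal{I}}$, so the induction on $\mathcal{F}_{A_\mathbb{R}}$ restricts to the induction on the subspace.

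Since a composition of $\mathbb{Z}$-algebra homomorphisms is again a $\mathbb{Z}$-algebra homomorphism, combining (i)--(iii) gives the result. The technical heart of the argument was already absorbed by Lemma~\ref{iso_I_1}, where the fiber-product identification $\delta_2^\ast (p_2^\ast)^{-1} = (\alpha_2^\ast)^{-1}\gamma_2^\ast$ in the big diagram matched the induction on $E_{\bfV_{\mathcal{I}}}$ with the induction on $E_{\bfV,\mathcal{I}}$; the present proposition is then a bookkeeping consequence. Accordingly, I do not anticipate a serious obstacle: the only thing worth double-checking is that one is consistent about the $\mathbb{F}_q$-structures, namely that $E^F_{\bbV_{\mathcal{I}}} = E_{\bfV_{\mathcal{I}}}$ and $G^F_{\bbV_{\mathcal{I}}} = G_{\bfV_{\mathcal{I}}}$ (noted just before the definition of $\chi_{\mathcal{I}}$), so that the output of $\chi^F$ is genuinely the input accepted by $\sigma^\ast_{\mathcal{I}}$.
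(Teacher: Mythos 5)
Your proposal is correct and matches the paper's argument: the paper defines $\chi_{\mathcal{I}}$ as exactly the composite $(\tau_{\mathcal{I}})_!\circ\sigma^\ast_{\mathcal{I}}\circ\chi^F_{Q_{\mathcal{I}}}$ and its proof simply cites Lemma \ref{iso_I_1} and Lemma \ref{subalg_I}, which is the content of your steps (ii) and (iii). You additionally make explicit the use of the proposition that $\chi^F_{Q_{\mathcal{I}}}$ is itself a $\mathbb{Z}$-algebra homomorphism and the compatibility of $\mathbb{F}_q$-structures, both of which the paper leaves implicit, but this is elaboration rather than a different route.
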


\begin{proof}
This result is direct by Lemma \ref{iso_I_1} and \ref{subalg_I}.

\end{proof}

The homomorphism $\chi_{\mathcal{I}}$ of $\mathbb{Z}$-algebras
 induces a homomorphsim of $\mathbb{C}$-algebras $$\chi^{\mathbb{C}}_{\mathcal{I}}: K_{Q_\mathcal{I}}\otimes_{\mathbb{Z}[v,v^{-1}]}\mathbb{C}\rightarrow \mathcal{F}_{A_{\mathbb{R}}}.$$

\subsection{}\label{sec_4.3}

Let  $\mathcal{I}_1$ and $\mathcal{I}_2$ be two partitions of $\mathbb{R}$ such that the partition $\mathcal{I}_2$ is a refinement of $\mathcal{I}_1$.
Assume that $$\mathcal{I}_1=(I_1,I_2,\ldots,I_n)$$ and 
$$\mathcal{I}_2=(I_{11},I_{12},\ldots,I_{1s_1},\ldots,I_{n1},I_{n2},\ldots,I_{ns_n})$$
such that $I_i=\bigcup_{j=1}^{n_i}I_{ij}$.

Let $\hat{E}$ be the subset of $E_{{\bbV}_{\mathcal{I}_2}}$ consisting of $\phi$ such that $\phi_{h_{ij}}$ are isomorphisms for all $1\leq j<n_i$.
For any $\phi\in\hat{E}$ and $1\leq j<n_i$, let $\phi_{ij}=\phi_{h_{ij}}$ if $I_{ij}\prec I_{ij+1}$ and 
$\phi_{ij}=\phi^{-1}_{h_{ij}}$ if $I_{ij+1}\prec I_{ij}$.

Consider the following correspondence
\begin{equation}\label{corres_1}
  \xymatrix{E_{{\bbV}_{\mathcal{I}_1}}&\hat{E}\ar[l]_-{r_1}\ar[r]^-{r_2}&E_{{\bbV}_{\mathcal{I}_2}}},  
\end{equation}
where
$${r_1(\phi)}_{h_i}=\left\{
\begin{aligned}
\phi_{h_{in_i}}\phi_{in_i-1}\ldots\phi_{i1},& \qquad  s(h_i)=I_i;\\
\phi^{-1}_{i1}\ldots\phi^{-1}_{in_i-1}\phi_{h_{in_i}},& \qquad s(h_i)=I_{i+1};
\end{aligned}
\right.$$
for any $\phi\in\hat{E}$ and $r_2$ is the natural embedding.

The group $G_{\bbV_{\mathcal{I}_2}}$ acts on $\hat{E}$.
Note that $G_{\bbV_{\mathcal{I}_2}}=G_{\bbV_{\mathcal{I}_1}}\times G_{\bbV_{\mathcal{I}_2-\mathcal{I}_1}}$, where $$G_{\bbV_{\mathcal{I}_2-\mathcal{I}_1}}=\prod_{i=1}^{n}\prod^{n_i}_{j=2}GL_{K}(({\bbV_{\mathcal{I}_2}})_{I_{ij}}).$$
The map $r_1$ is a principal $G_{\bbV_{\mathcal{I}_2-\mathcal{I}_1}}$-bundle.
Hence, there is an equivalence
$$r^\ast_1:\cD_{G_{\bbV_{\mathcal{I}_1}}}^b(E_{{\bbV}_{\mathcal{I}_1}})\rightarrow\cD_{G_{\bbV_{\mathcal{I}_2}}}^b(\hat{E}).$$
The map $r_2$ is $G_{\bbV_{\mathcal{I}_2}}$-equivariant, which induces a functor
$${r_2}_!:\cD_{G_{\bbV_{\mathcal{I}_2}}}^b(\hat{E})\rightarrow\cD_{G_{\bbV_{\mathcal{I}_2}}}^b(E_{{\bbV}_{\mathcal{I}_2}}).$$

Then we can define a functor
$$\Phi_{\mathcal{I}_1,\mathcal{I}_2}={r_2}_!r_1^\ast:\cD_{G_{\bbV_{\mathcal{I}_1}}}^b(E_{{\bbV}_{\mathcal{I}_1}})\rightarrow\cD_{G_{\bbV_{\mathcal{I}_2}}}^b(E_{{\bbV}_{\mathcal{I}_2}}).$$
Consider all dimension vectors, we get
a map
$$\Phi_{\mathcal{I}_1,\mathcal{I}_2}:K_{Q_{\mathcal{I}_1}}\rightarrow K_{Q_{\mathcal{I}_2}}.$$

\begin{lemma}\label{homo_alg_two_par}
For any partitions $\mathcal{I}_1$ and $\mathcal{I}_2$ of $\mathbb{R}$ such that $\mathcal{I}_2$ is a refinement of $\mathcal{I}_1$,
the map $\Phi_{\mathcal{I}_1,\mathcal{I}_2}:K_{Q_{\mathcal{I}_1}}\rightarrow K_{Q_{\mathcal{I}_2}}$ is a homomorphism of $\mathbb{Z}[v,v^{-1}]$-algebras.
\end{lemma}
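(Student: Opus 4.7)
The plan is to show that $\Phi_{\mathcal{I}_1,\mathcal{I}_2}=(r_2)_! r_1^\ast$ intertwines the induction products by factoring the verification through an intermediate induction on $\hat E$-supported complexes and applying base change. The crucial observation is a support lemma: fix dimension vectors $\nu',\nu''$ for $Q_{\mathcal{I}_1}$ and extend them by pullback to $Q_{\mathcal{I}_2}$ (they are then constant on each refinement group $\{I_{ij}\}_j$), and take any $\phi\in\hat E$ together with a $\phi$-stable subspace $\bbW\subset\bbV_{\mathcal{I}_2}$ of dimension $\nu''_{\mathcal{I}_2}$. Because $\dim\bbW_{I_{ij}}=\dim\bbW_{I_{ij+1}}$ for every refinement arrow, the injections $\phi_{h_{ij}}|_{\bbW}$ are automatically isomorphisms, and similarly for the induced surjections on $\bbV_{\mathcal{I}_2}/\bbW$; moreover such a $\bbW$ is determined by the corresponding subspace of $\bbV_{\mathcal{I}_1}$. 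Conversely, the snake lemma forces an extension whose sub- and quotient-objects lie in $\hat E$ to have its middle object in $\hat E$, so the $\mathcal{I}_2$-induction of two $\hat E$-supported complexes is again $\hat E$-supported.

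Using this, I would arrange a three-row ladder of correspondence diagrams
$$
\xymatrix@R=1.1em@C=1.3em{
E_{\bbV'_{\mathcal{I}_1}}\times E_{\bbV''_{\mathcal{I}_1}} & E_1'\ar[l]\ar[r] & E_1''\ar[r] & E_{\bbV_{\mathcal{I}_1}}\\
\hat E'\times\hat E''\ar[u] & \tilde E'\ar[l]\ar[r]\ar[u] & \tilde E''\ar[r]\ar[u] & \hat E\ar[u]_{r_1}\\
E_{\bbV'_{\mathcal{I}_2}}\times E_{\bbV''_{\mathcal{I}_2}}\ar[u] & E_2'\ar[l]\ar[r]\ar[u] & E_2''\ar[r]\ar[u] & E_{\bbV_{\mathcal{I}_2}}\ar[u]_{r_2}
}
$$
whose middle row consists of the restrictions of the bottom row to $\hat E$-supports. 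The upward arrows from the middle row to the top row are principal $G_{\bbV_{\mathcal{I}_2-\mathcal{I}_1}}$-bundles, and the support lemma ensures each square is Cartesian; combining smooth base change (for the principal bundles) with proper base change (for the induction pushforwards) then yields
$$r_1^\ast(\mathcal{L}'\ast\mathcal{L}'')=r_1^\ast\mathcal{L}'\star r_1^\ast\mathcal{L}'',$$
where $\star$ denotes the induction defined via the middle row. The arrows from the middle to the bottom row are closed embeddings, and the same support lemma makes the corresponding squares Cartesian once both sides are restricted to the relevant supports; a second application of base change yields
$$(r_2)_!(\mathcal{M}'\star\mathcal{M}'')=(r_2)_!\mathcal{M}'\ast(r_2)_!\mathcal{M}''.$$
Concatenating the two identities gives $\Phi(\mathcal{L}'\ast\mathcal{L}'')=\Phi(\mathcal{L}')\ast\Phi(\mathcal{L}'')$, while the $\bbZ[v,v^{-1}]$-linearity is automatic since both $r_1^\ast$ and $(r_2)_!$ commute with shifts and Tate twists.

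The main obstacle will be the careful verification that the set-level identifications in the support lemma upgrade to genuine scheme-theoretic Cartesian squares; in particular, one has to check that the intermediate spaces $\tilde E'$ and $\tilde E''$ sit as principal $G_{\bbV_{\mathcal{I}_2-\mathcal{I}_1}}$-bundles over the appropriate pieces of the top row, and that the proper and smooth hypotheses required for the base-change arguments hold uniformly along the ladder.
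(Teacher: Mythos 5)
Your proposal is correct and takes essentially the same route as the paper: the paper likewise builds a ladder of induction correspondences over $E_{\bbV_{\mathcal{I}_1}}$, $\hat{E}$ and $E_{\bbV_{\mathcal{I}_2}}$, uses exactly your support observation (for $\phi\in\hat{E}$ a $\phi$-stable subspace with the pulled-back dimension vector has all refinement-arrow maps isomorphisms and is determined by its $\mathcal{I}_1$-part, and conversely extensions of $\hat{E}$-objects stay in $\hat{E}$) to identify the intermediate rows and make the squares Cartesian, and concludes by base change at both ends together with the principal-bundle equivalences. The only slip is that $r_2:\hat{E}\rightarrow E_{\bbV_{\mathcal{I}_2}}$ is an open, not closed, immersion (and the bundle group over the leftmost column is the product $G_{\bbV'_{\mathcal{I}_2-\mathcal{I}_1}}\times G_{\bbV''_{\mathcal{I}_2-\mathcal{I}_1}}$), but neither affects the base-change arguments.
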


\begin{proof}
Consider the following commutative diagram
$$
\xymatrix{E_{\bbV'_{\mathcal{I}_1}}\times E_{\bbV''_{\mathcal{I}_1}}&E_1'\ar[l]_-{p_1}\ar[r]^-{p_2}&E_1''\ar[r]^-{p_3}&E_{\bbV_{\mathcal{I}_1}}\\
E_{\bbV'_{\mathcal{I}_1}}\times E_{\bbV''_{\mathcal{I}_1}}\ar[u]_-{id}&X_1\ar[u]_-{\gamma_1}\ar[l]_-{\alpha_1}\ar[r]^-{\alpha_2}&Y_1\ar[u]_-{\delta_1}\ar[r]^-{\alpha_3}&\hat{E}\ar[u]_-{r_1}\\
\hat{E}'\times \hat{E}''\ar[d]^-{r_2\times r_2}\ar[u]_-{r_1\times r_1}& X_2\ar[l]_-{\beta_1}\ar[r]^-{\beta_2}\ar[d]^-{\gamma_3}\ar[u]_-{\gamma_2} & Y_2\ar[d]^-{\delta_3}\ar[u]_-{\delta_2}\ar[r]^-{\beta_3}&\hat{E}\ar[d]^-{r_2}\ar[u]_-{id}
\\
E_{\bbV'_{\mathcal{I}_2}}\times E_{\bbV''_{\mathcal{I}_2}}&E_2'\ar[l]_-{p_1}\ar[r]^-{p_2}&E_2''\ar[r]^-{p_3}&E_{\bbV_{\mathcal{I}_2}}
.}
$$
Here,
\begin{enumerate}
  \item[(1)]$E_1''=\{(\phi,\bbW)\}$, where $\phi\in E_{\bbV_{\mathcal{I}_1}}$ and $\bbW$ is a $\phi$-stable subspace of $\bbV_{\mathcal{I}_1}$ with dimension vector $\nu''_{\mathcal{I}_1}$;
  \item[(2)]$E_1'=\{(\phi,\bbW,R'',R')\}$, where $(\phi,\bbW)\in E_1''$, $R'':\bbV''_{\mathcal{I}_1}\simeq\bbW$ and $R':\bbV'_{\mathcal{I}_1}\simeq{\bbV_{\mathcal{I}_1}}/\bbW$;
  \item[(3)]$E_2''=\{(\phi,\bbW)\}$, where $\phi\in E_{\bbV_{\mathcal{I}_2}}$ and $\bbW$ is a $\phi$-stable subspace of $\bbV_{\mathcal{I}_2}$ with dimension vector $\nu''_{\mathcal{I}_2}$;
  \item[(4)]$E_2'=\{(\phi,\bbW,R'',R')\}$, where $(\phi,\bbW)\in E_2''$, $R'':\bbV''_{\mathcal{I}_2}\simeq\bbW$ and $R':\bbV'_{\mathcal{I}_2}\simeq{\bbV_{\mathcal{I}_2}}/\bbW$;
  \item[(5)]$\hat{E}$ is the subset of $E_{{\bbV}_{\mathcal{I}_2}}$ consisting of $\phi$ such that $\phi_{h_{ij}}$ are isomorphism for all $1\leq j<n_i$;
  \item[(6)]$\hat{E}'$ is the subset of $E_{\bbV'_{\mathcal{I}_2}}$ consisting of $\phi$ such that $\phi_{h_{ij}}$ are isomorphism for all $1\leq j<n_i$;
  \item[(7)]$\hat{E}''$ is the subset of $E_{\bbV''_{\mathcal{I}_2}}$ consisting of $\phi$ such that $\phi_{h_{ij}}$ are isomorphism for all $1\leq j<n_i$;
  \item[(8)]$Y_2=\{(\phi,\bbW)\}$, where $\phi\in\hat{E}$ and $\bbW$ is a $\phi$-stable subspace of $\bbV_{\mathcal{I}_2}$ with dimension function $\nu''_{\mathcal{I}_2}$;
  \item[(9)]$X_2=\{(\phi,\bbW,R'',R')\}$, where $(\phi,\bbW)\in Y_2$, $R'':\bbV''_{\mathcal{I}_2}\simeq \bbW$ and $R':\bbV'_{\mathcal{I}_2}\simeq{\bbV_{\mathcal{I}_2}}/\bbW$;
  \item[(10)]$Y_1=\{(\phi,\bbW)\}$, where $\phi\in\hat{E}$ and $\bbW$ is a $\phi$-stable subspace of $\bbV_{\mathcal{I}_1}$ with dimension function $\nu''_{\mathcal{I}_1}$;
  \item[(11)]$X_1=\{(\phi,W,R'',R')\}$, where $(\phi,\bbW)\in Y_1$, $R'':\bbV''_{\mathcal{I}_1}\simeq \bbW$ and $R':\bbV'_{\mathcal{I}_1}\simeq{\bbV_{\mathcal{I}_1}}/\bbW$;
  \item[(12)]$\delta_1(\phi,\bbW)=(r_1(\phi),\bbW)$;
  \item[(13)]$\gamma_1(\phi,\bbW,R'',R')=(r_1(\phi),\bbW,R'',R')$;
  \item[(14)]$\delta_2(\phi,\bbW)=(\phi,\bbW_{\mathcal{I}_1})$, where $\bbW_{\mathcal{I}_1}$ is the restriction of $\bbW$ on $\mathcal{I}_1$;
  \item[(15)]$\gamma_2(\phi,\bbW,R'',R')=(\phi,\bbW_{\mathcal{I}_1},R''_{\mathcal{I}_1},R'_{\mathcal{I}_1})$, where $R''_{\mathcal{I}_1}$ and $R'_{\mathcal{I}_1}$ are the restrictions of $R''$ and $R'$ on $\mathcal{I}_1$;
  \item[(16)]$\delta_3(\phi,\bbW)=(\phi,\bbW)$;
  \item[(17)]$\gamma_3(\phi,\bbW,R'',R')=(\phi,\bbW,R'',R')$;
  \item[(18)]$\alpha_1=p_1\gamma_1$;
  \item[(19)]$\alpha_2(\phi,\bbW,R'',R')=(\phi,\bbW)$;
  \item[(20)]$\alpha_3(\phi,\bbW)=\phi$.
  \item[(21)]$\beta_1=p_1|_{X_2}$;
  \item[(22)]$\beta_2(\phi,\bbW,R'',R')=(\phi,\bbW)$;
  \item[(23)]$\beta_3(\phi,\bbW)=\phi$.
\end{enumerate}

Now, for any $\mathcal{L}_1\in\cD_{G_{\bbV'_{\mathcal{I}_1}}}^b(E_{\bbV'_{\mathcal{I}_1}})$
and $\mathcal{L}_2\in\cD_{G_{\bbV''_{\mathcal{I}_1}}}^b(E_{\bbV''_{\mathcal{I}_1}})$, let
$$A=\Phi_{\mathcal{I}_1,\mathcal{I}_2}(\mathcal{L}_1\ast\mathcal{L}_2).$$
By definition, we have
$$A\cong{r_2}_! r_1^\ast{p_3}_{!}{p_2}_{\flat}p_1^{\ast}(\mathcal{L}_1\boxtimes\mathcal{L}_2).$$
Since the commutative diagram$$
\xymatrix{
 Y_1\ar[d]^-{\delta_1}\ar[r]^-{\alpha_3}&\hat{E}\ar[d]^-{r_1}\\
E_1''\ar[r]^-{p_3}&E_{\bbV_{\mathcal{I}_1}}}
$$
is a fiber product, we have
$$A\cong{r_2}_! {\alpha_3}_{!}\delta_1^\ast{p_2}_{\flat}p_1^{\ast}(\mathcal{L}_1\boxtimes\mathcal{L}_2).$$
Since $\delta_2$ is an isomorphism and $\beta_3=\alpha_3\delta_2$, we have
$$A\cong{r_2}_! {\beta_3}_{!}\delta_2^\ast\delta_1^\ast{p_2}_{\flat}p_1^{\ast}(\mathcal{L}_1\boxtimes\mathcal{L}_2).$$
Then we have
\begin{eqnarray*}
A&\cong&{p_3}_! {\delta_3}_{!}\delta_2^\ast\delta_1^\ast{p_2}_{\flat}p_1^{\ast}(\mathcal{L}_1\boxtimes\mathcal{L}_2)\\
&\cong&{p_3}_! {\delta_3}_{!}\delta_2^\ast{\alpha_2}_{\flat}\gamma_1^\ast p_1^{\ast}(\mathcal{L}_1\boxtimes\mathcal{L}_2)\\
&\cong&{p_3}_! {\delta_3}_{!}{\beta_2}_{\flat}\gamma_2^\ast\gamma_1^\ast p_1^{\ast}(\mathcal{L}_1\boxtimes\mathcal{L}_2)\\
&\cong&{p_3}_! {p_2}_{\flat}{\gamma_3}_{!}\gamma_2^\ast \gamma_1^\ast p_1^{\ast}(\mathcal{L}_1\boxtimes\mathcal{L}_2)\\
&\cong&{p_3}_! {p_2}_{\flat}{\gamma_3}_{!}\gamma_2^\ast \alpha_1^\ast(\mathcal{L}_1\boxtimes\mathcal{L}_2)\\
&\cong&{p_3}_! {p_2}_{\flat}{\gamma_3}_{!} \beta_1^\ast(r_1^\ast(\mathcal{L}_1)\boxtimes r_1^\ast(\mathcal{L}_2)).
\end{eqnarray*}
Since the commutative diagram
$$
\xymatrix{X_2\ar[r]^-{\beta_1}\ar[d]^-{\gamma_3}&\hat{E}'\times\hat{E}''\ar[d]^-{r_2\times r_2}\\
E'_2\ar[r]^-{p_1}&E_{\bbV'_{\mathcal{I}_2}}\times E_{\bbV''_{\mathcal{I}_2}}}
$$
is a fiber product, we have
\begin{eqnarray*}
A&\cong&{p_3}_! {p_2}_{\flat}p_1^\ast({r_2}_{!}r_1^\ast(\mathcal{L}_1)\boxtimes {r_2}_{!}r_1^\ast(\mathcal{L}_2))\\
&\cong&{p_3}_{!}{p_2}_{\flat}p_1^\ast(\Phi_{\mathcal{I}_1,\mathcal{I}_2}(\mathcal{L}_1)\boxtimes\Phi_{\mathcal{I}_1,\mathcal{I}_2}(\mathcal{L}_2))\\
&\cong&
\Phi_{\mathcal{I}_1,\mathcal{I}_2}(\mathcal{L}_1)\ast\Phi_{\mathcal{I}_1,\mathcal{I}_2}(\mathcal{L}_2).\end{eqnarray*}

Hence, the map $\Phi_{\mathcal{I}_1,\mathcal{I}_2}$ is a homomorphism of algebras.

\end{proof}

Note that $$\Phi_{\mathcal{I}_2,\mathcal{I}_3}\Phi_{\mathcal{I}_1,\mathcal{I}_2}=\Phi_{\mathcal{I}_1,\mathcal{I}_3}$$ for partitions $\mathcal{I}_1$, $\mathcal{I}_2$ and $\mathcal{I}_3$ of $\mathbb{R}$ such that $\mathcal{I}_2$ is a refinement of $\mathcal{I}_1$ and $\mathcal{I}_3$ is a refinement of $\mathcal{I}_2$.
Hence, 
$$(K_{Q_\mathcal{I}},\Phi_{\mathcal{I}_1,\mathcal{I}_2})$$ is a direct system.
Let $$(K_{A_{\mathbb{R}}},\Phi_{\mathcal{I}})$$ be the direct limit of this direct system, where $K_{A_{\mathbb{R}}}$ is an $\mathbb{Z}[v,v^{-1}]$-algebra and 
$$\Phi_{\mathcal{I}}: K_{Q_\mathcal{I}}\rightarrow K_{A_{\mathbb{R}}}$$
is homomorphism of algebras.

Similarly, consider the following correspondence over $\mathbb{F}_q$
\begin{equation}\label{corres_2}
 \xymatrix{E_{{\bfV}_{\mathcal{I}_1}}&\hat{E}\ar[l]_-{r_1}\ar[r]^-{r_2}&E_{{\bfV}_{\mathcal{I}_2}}},   
\end{equation}
and we can define a map $$\Phi^F_{\mathcal{I}_1,\mathcal{I}_2}={r_2}_! r_1^\ast:\mathcal{F}_{G_{{\bfV}_{\mathcal{I}_1}}}(E_{{\bfV}_{\mathcal{I}_1}})\rightarrow\mathcal{F}_{G_{{\bfV}_{\mathcal{I}_2}}}(E_{{\bfV}_{\mathcal{I}_2}}).$$

Consider all dimension vectors, we get a
map
$$\Phi^F_{\mathcal{I}_1,\mathcal{I}_2}:\mathcal{F}_{Q_{\mathcal{I}_1}}\rightarrow \mathcal{F}_{Q_{\mathcal{I}_2}}.$$

\begin{lemma}\label{prop_homo_alg_fun}
For any partitions $\mathcal{I}_1$ and $\mathcal{I}_2$ of $\mathbb{R}$ such that $\mathcal{I}_2$ is a refinement of $\mathcal{I}_1$,
the map $\Phi^F_{\mathcal{I}_1,\mathcal{I}_2}$ is a homomorphism of algebras such that the follwoing diagram is commutative
$$\xymatrix{
K_{Q_{\mathcal{I}_1}}\ar[r]^-{\Phi_{\mathcal{I}_1,\mathcal{I}_2}}\ar[d]_-{\chi^F_{Q_{\mathcal{I}_1}}}&K_{Q_{\mathcal{I}_2}}\ar[d]^-{\chi^F_{Q_{\mathcal{I}_2}}}\\
\mathcal{F}_{Q_{\mathcal{I}_1}}\ar[r]^-{\Phi^F_{\mathcal{I}_1,\mathcal{I}_2}}&\mathcal{F}_{Q_{\mathcal{I}_2}}.
}$$
\end{lemma}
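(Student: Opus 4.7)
The plan is to transplant the proof of Lemma \ref{homo_alg_two_par} verbatim to the function-theoretic setting, since the construction of $\Phi^F_{\mathcal{I}_1,\mathcal{I}_2}={r_2}_! r_1^\ast$ mirrors that of $\Phi_{\mathcal{I}_1,\mathcal{I}_2}={r_2}_! r_1^\ast$ with complexes replaced by functions. More precisely, I would first verify that $\Phi^F_{\mathcal{I}_1,\mathcal{I}_2}$ is an algebra homomorphism, and then deduce commutativity of the square from the sheaf-function correspondence.

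For the multiplicativity, I would reuse the same big commutative diagram of spaces $X_1,X_2,Y_1,Y_2$ appearing in the proof of Lemma \ref{homo_alg_two_par}, now reading all varieties as their sets of $\mathbb{F}_q$-points. The key observations are: (i) $r_1:\hat E\rightarrow E_{{\bfV}_{\mathcal{I}_1}}$ is a principal $G_{\bfV_{\mathcal{I}_2-\mathcal{I}_1}}$-bundle, so $r_1^\ast$ restricts to a bijection on $G$-invariant functions with inverse given by averaging along fibers; (ii) the analogous statements hold for the maps $\alpha_2,\beta_2,\gamma_1,\gamma_2,\delta_1$, which are all principal bundles for appropriate subgroups; (iii) the two small squares used in the original proof (the one built from $\delta_1,\alpha_3,r_1,p_3$ and the one built from $\gamma_3,\beta_1,r_2\times r_2,p_1$) are fiber products, so proper/smooth base change for the ${}_!$ and ${}^\ast$ operations carries over to counting points over $\mathbb{F}_q$. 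Chasing through the same identifications in the same order as in Lemma \ref{homo_alg_two_par} then yields $\Phi^F_{\mathcal{I}_1,\mathcal{I}_2}(f\ast g)=\Phi^F_{\mathcal{I}_1,\mathcal{I}_2}(f)\ast\Phi^F_{\mathcal{I}_1,\mathcal{I}_2}(g)$.

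For the commutativity of the diagram, I would invoke the sheaf-function correspondence of \cite{Kiehl_Weissauer_Weil_conjectures_perverse_sheaves_and_l'adic_Fourier_transform}: the trace-of-Frobenius map $\chi^F$ intertwines $f_!$ with ${f^F_!}$ and $f^\ast$ with $(f^F)^\ast$ for morphisms $f$ defined over $\mathbb{F}_q$. Since both $r_1$ and $r_2$ are defined over $\mathbb{F}_q$, applying $\chi^F$ to a complex $\mathcal{L}\in\mathcal{D}^b_{G_{\bbV_{\mathcal{I}_1}}}(E_{{\bbV}_{\mathcal{I}_1}})$ gives
\begin{equation*}
\chi^F_{Q_{\mathcal{I}_2}}\bigl(\Phi_{\mathcal{I}_1,\mathcal{I}_2}(\mathcal{L})\bigr)
=\chi^F_{Q_{\mathcal{I}_2}}\bigl({r_2}_! r_1^\ast\mathcal{L}\bigr)
={r_2}_!\,r_1^\ast\bigl(\chi^F_{Q_{\mathcal{I}_1}}(\mathcal{L})\bigr)
=\Phi^F_{\mathcal{I}_1,\mathcal{I}_2}\bigl(\chi^F_{Q_{\mathcal{I}_1}}(\mathcal{L})\bigr),
\end{equation*}
which is precisely the desired commutativity.

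The only mildly delicate point I anticipate is the base change step: one must check that the two squares used in the proof of Lemma \ref{homo_alg_two_par} remain cartesian after passage to $\mathbb{F}_q$-points, so that the identities ${\alpha_3}_!\,\delta_1^\ast={r_1}^\ast\,{p_3}_!$ and ${\gamma_3}_!\,\beta_1^\ast=p_1^\ast\,({r_2\times r_2})_!$ hold in the function setting. Since the squares are fiber products of varieties defined over $\mathbb{F}_q$ and all maps are $\mathbb{F}_q$-morphisms, this reduces to a direct counting argument over fibers, and no further obstacle appears. The rest is a purely formal rearrangement identical to the one carried out in the proof of Lemma \ref{homo_alg_two_par}.
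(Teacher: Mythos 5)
Your proposal is correct and follows essentially the same route as the paper: the paper's proof simply says that the multiplicativity of $\Phi^F_{\mathcal{I}_1,\mathcal{I}_2}$ is shown ``similarly to the proof of Lemma \ref{homo_alg_two_par}'' and that the commutative square follows from the sheaf-function correspondence, which is precisely your two-step plan (with the base-change squares and principal-bundle identifications spelled out in somewhat more detail than the paper bothers to).
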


\begin{proof}
Similarly to the proof of Lemma \ref{homo_alg_two_par}, we can show that $\Phi^F_{\mathcal{I}_1,\mathcal{I}_2}$ is a homomorphism of algebras.
By the properties of sheaf-function correspondence (\cite{Kiehl_Weissauer_Weil_conjectures_perverse_sheaves_and_l'adic_Fourier_transform}), we get this commutative diagram.

\end{proof}

On the relations between $\Phi^F_{\mathcal{I}_1,\mathcal{I}_2}$ and $(\tau_{\mathcal{I}_1,\mathcal{I}_2})_!$, we have the following lemma.

\begin{lemma}\label{prop_homo_alg_fun_1}
For any partitions $\mathcal{I}_1$ and $\mathcal{I}_2$ of $\mathbb{R}$ such that $\mathcal{I}_2$ is a refinement of $\mathcal{I}_1$, we have the following commutative diagram
$$\xymatrix{
\mathcal{F}_{Q_{\mathcal{I}_1}}\ar[r]^-{\Phi^F_{\mathcal{I}_1,\mathcal{I}_2}}\ar[d]^-{\sigma_{\mathcal{I}_1}^\ast}&\mathcal{F}_{Q_{\mathcal{I}_2}}\ar[d]^-{\sigma_{\mathcal{I}_2}^\ast}
\\
\mathcal{F}_{A_{\mathbb{R}},{\mathcal{I}_1}}\ar[r]^-{(\tau_{\mathcal{I}_1,\mathcal{I}_2})_!}&\mathcal{F}_{A_{\mathbb{R}},{\mathcal{I}_2}}
.}$$
\end{lemma}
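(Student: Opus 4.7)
The plan is to evaluate both sides of the diagram pointwise at a function $f\in\mathcal{F}_{Q_{\mathcal{I}_1}}$ and $\phi\in E_{\bfV,\mathcal{I}_2}$, and reduce the commutativity to two purely geometric statements about the maps $\sigma_{\mathcal{I}_1},\sigma_{\mathcal{I}_2},\tau_{\mathcal{I}_1,\mathcal{I}_2},r_1,r_2$.

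Since $r_2$ is an embedding, ${r_2}_!$ is extension by zero; since $\tau_{\mathcal{I}_1,\mathcal{I}_2}$ is an embedding, $(\tau_{\mathcal{I}_1,\mathcal{I}_2})_!$ is extension by zero as well. Direct unwinding gives
\begin{equation*}
(\sigma_{\mathcal{I}_2}^\ast\Phi^F_{\mathcal{I}_1,\mathcal{I}_2}f)(\phi)=\begin{cases}f\bigl(r_1(\sigma_{\mathcal{I}_2}(\phi))\bigr)&\text{if }\sigma_{\mathcal{I}_2}(\phi)\in\hat E,\\ 0&\text{otherwise,}\end{cases}
\end{equation*}
while
\begin{equation*}
\bigl((\tau_{\mathcal{I}_1,\mathcal{I}_2})_!\sigma_{\mathcal{I}_1}^\ast f\bigr)(\phi)=\begin{cases}f(\sigma_{\mathcal{I}_1}(\phi))&\text{if }\phi\in E_{\bfV,\mathcal{I}_1},\\ 0&\text{otherwise.}\end{cases}
\end{equation*}
Thus the lemma reduces to verifying (a) $\phi\in E_{\bfV,\mathcal{I}_1}$ iff $\sigma_{\mathcal{I}_2}(\phi)\in\hat E$, and (b) for such $\phi$, the identity $r_1(\sigma_{\mathcal{I}_2}(\phi))=\sigma_{\mathcal{I}_1}(\phi)$.

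For (a), recall that $\phi\in E_{\bfV,\mathcal{I}_1}$ exactly when every $\phi_{xy}$ with $x\preccurlyeq y$ in a common $I_i\in\mathcal{I}_1$ is an isomorphism. The within-$I_i$ arrows $h_{ij}$, $1\leq j<n_i$, of $Q_{\mathcal{I}_2}$ connect subintervals $I_{ij},I_{i,j+1}\subset I_i$, and by the explicit formula for $\sigma_{\mathcal{I}_2}$ the image $\sigma_{\mathcal{I}_2}(\phi)_{h_{ij}}$ is a product of three $\phi_{xy}$'s whose endpoints all lie in $I_i$; hence these are simultaneously isomorphisms exactly when $\phi\in E_{\bfV,\mathcal{I}_1}$.

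For (b), fix $h_i\in Q_{\mathcal{I}_1,1}$ with $s(h_i)=I_i$ (the other orientation is symmetric). By definition, $r_1(\sigma_{\mathcal{I}_2}(\phi))_{h_i}=\sigma_{\mathcal{I}_2}(\phi)_{h_{i,n_i}}\circ\psi_{i,n_i-1}\cdots\circ\psi_{i,1}$, where each $\psi_{i,j}$ is $\sigma_{\mathcal{I}_2}(\phi)_{h_{ij}}^{\pm 1}$ according to the local orientation. Plugging in the four-case formula for $\sigma_{\mathcal{I}_2}(\phi)_{h_{ij}}$ in terms of $\phi_{xy}$ and using the cocycle relation $\phi_{yz}\phi_{xy}=\phi_{xz}$ (extended by $\phi_{yx}=\phi_{xy}^{-1}$ for $x,y$ in a common interval of $\mathcal{I}_1$), the intermediate basepoints $b_{i,1},\ldots,b_{i,n_i}$ and the interior boundary crossings at the points $a_{i,j}$ telescope, collapsing the whole product to the three-term expression $\phi_{a_i+\epsilon,b_{i+1}}\phi_{a_i,a_i+\epsilon}\phi_{b_i,a_i}$ that defines $\sigma_{\mathcal{I}_1}(\phi)_{h_i}$.

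\textbf{The main obstacle} is the bookkeeping in step (b): there are four cases at each refined boundary depending on orientation ($\prec$ vs.\ $\succ$) and on whether the endpoint belongs to the left or right subinterval, giving many subcases to handle, and one must carefully track the identifications of $\bfV_{b_i}$ with $\bfV_{b_{i,1}}$ (and analogously on the $I_{i+1}$ side) induced by $\phi$ before the telescoping can be carried out uniformly. Once this is organised, each case collapses to the same three-step formula as $\sigma_{\mathcal{I}_1}$.
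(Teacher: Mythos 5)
Your argument is correct and is essentially the paper's own proof: the paper verifies the identity on the characteristic functions $1_{\mathcal{O}_V}$ of orbits, and the content of that verification is exactly your claims (a) and (b), which the paper asserts as the isomorphism $V_2\cong V_3$ between the representations obtained along the two routes. One small adjustment: your identity (b) should be read as an isomorphism of representations of $Q_{\mathcal{I}_1}$ rather than a literal equality (the basepoints $b_i$ and $b_{i,1}$ give different underlying spaces, identified only via $\phi$, as you note), but since $f$ is $G$-invariant only the orbit matters, so your pointwise evaluation goes through unchanged.
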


\begin{proof}
Let $V=(\bfV,\phi)$ be a representation of $A_{\mathbb{R}}$ such that $\phi\in E_{\bfV,\mathcal{I}_1}$.
Let $V_1=(\bfV_{\mathcal{I}_1},\sigma_{\mathcal{I}_1}(\phi))$ and $V_2=(\bfV_{\mathcal{I}_2},\sigma_{\mathcal{I}_2}(\phi))$.
Let $V_3=(\bfV_{\mathcal{I}_2},\psi)$ be a representation of $Q_{\mathcal{I}_2}$ such that $r_1(\psi)=\sigma_{\mathcal{I}_1}(\phi)$.

Then, we have
$$(\sigma_{\mathcal{I}_2}^\ast)^{-1}(\tau_{\mathcal{I}_1,\mathcal{I}_2})_!(1_{\mathcal{O}_V})=(\sigma_{\mathcal{I}_2}^\ast)^{-1}(1_{\mathcal{O}_V})=1_{\mathcal{O}_{V_2}}$$
and
$$\Phi^F_{\mathcal{I}_1,\mathcal{I}_2}(\sigma_{\mathcal{I}_1}^\ast)^{-1}(1_{\mathcal{O}_V})=\Phi^F_{\mathcal{I}_1,\mathcal{I}_2}(1_{\mathcal{O}_{V_1}})=1_{\mathcal{O}_{V_3}}.$$
Since $V_2\cong V_3$, we have $1_{\mathcal{O}_{V_2}}=1_{\mathcal{O}_{V_3}}$
and 
$$\Phi^F_{\mathcal{I}_1,\mathcal{I}_2}(\sigma_{\mathcal{I}_1}^\ast)^{-1}(1_{\mathcal{O}_V})=(\sigma_{\mathcal{I}_2}^\ast)^{-1}(\tau_{\mathcal{I}_1,\mathcal{I}_2})_!(1_{\mathcal{O}_V}).$$

Hence, we have
$$\Phi^F_{\mathcal{I}_1,\mathcal{I}_2}(\sigma_{\mathcal{I}_1}^\ast)^{-1}=(\sigma_{\mathcal{I}_2}^\ast)^{-1}(\tau_{\mathcal{I}_1,\mathcal{I}_2})_!$$
and
$$\sigma_{\mathcal{I}_2}^\ast\Phi^F_{\mathcal{I}_1,\mathcal{I}_2}=(\tau_{\mathcal{I}_1,\mathcal{I}_2})_!\sigma_{\mathcal{I}_1}^\ast.$$
   
\end{proof}

\begin{proposition}\label{prop_1}
For any partitions $\mathcal{I}_1$ and $\mathcal{I}_2$ of $\mathbb{R}$ such that $\mathcal{I}_2$ is a refinement of $\mathcal{I}_1$, we have the following commutative diagram
$$\xymatrix{K_{Q_{\mathcal{I}_1}}\ar[rr]^-{\Phi_{\mathcal{I}_1,\mathcal{I}_2}}\ar[dr]_-{\chi_{\mathcal{I}_1}}&&K_{Q_{\mathcal{I}_2}}\ar[dl]^-{\chi_{\mathcal{I}_2}}\\&\mathcal{F}_{A_{\mathbb{R}}}.}$$
\end{proposition}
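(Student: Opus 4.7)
The plan is to decompose $\chi_{\mathcal{I}}$ into its three constituent pieces and then invoke the two lemmas immediately preceding the statement to move $\Phi_{\mathcal{I}_1,\mathcal{I}_2}$ across each piece in turn. Recall from the definition of $\chi_{\mathcal{I}}$ that, up to considering all dimension vectors at once, we can write
\[
\chi_{\mathcal{I}} \;=\; (\tau_{\mathcal{I}})_{!}\,\circ\,\sigma_{\mathcal{I}}^{\ast}\,\circ\,\chi^{F}_{Q_{\mathcal{I}}},
\]
where $\tau_{\mathcal{I}}\colon E_{\bfV,\mathcal{I}}\hookrightarrow E_{\bfV}$ is the natural embedding from Section~\ref{top}.

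First I would apply Lemma~\ref{prop_homo_alg_fun} to pull $\Phi_{\mathcal{I}_1,\mathcal{I}_2}$ through the sheaf-function correspondence:
\[
\chi^{F}_{Q_{\mathcal{I}_2}}\circ\Phi_{\mathcal{I}_1,\mathcal{I}_2}
\;=\;\Phi^{F}_{\mathcal{I}_1,\mathcal{I}_2}\circ\chi^{F}_{Q_{\mathcal{I}_1}}.
\]
Next I would apply Lemma~\ref{prop_homo_alg_fun_1} to pull the resulting $\Phi^{F}_{\mathcal{I}_1,\mathcal{I}_2}$ through the maps $\sigma^{\ast}_{\mathcal{I}_i}$:
\[
\sigma^{\ast}_{\mathcal{I}_2}\circ\Phi^{F}_{\mathcal{I}_1,\mathcal{I}_2}
\;=\;(\tau_{\mathcal{I}_1,\mathcal{I}_2})_{!}\circ\sigma^{\ast}_{\mathcal{I}_1}.
\]
Finally, since $\tau_{\mathcal{I}_1}=\tau_{\mathcal{I}_2}\circ\tau_{\mathcal{I}_1,\mathcal{I}_2}$ as embeddings of subsets of $E_{\bfV}$, functoriality of $(-)_{!}$ for closed/open inclusions gives $(\tau_{\mathcal{I}_2})_{!}\circ(\tau_{\mathcal{I}_1,\mathcal{I}_2})_{!}=(\tau_{\mathcal{I}_1})_{!}$. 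Chaining these three identities together yields
\[
\chi_{\mathcal{I}_2}\circ\Phi_{\mathcal{I}_1,\mathcal{I}_2}
=(\tau_{\mathcal{I}_2})_{!}\sigma^{\ast}_{\mathcal{I}_2}\chi^{F}_{Q_{\mathcal{I}_2}}\Phi_{\mathcal{I}_1,\mathcal{I}_2}
=(\tau_{\mathcal{I}_2})_{!}(\tau_{\mathcal{I}_1,\mathcal{I}_2})_{!}\sigma^{\ast}_{\mathcal{I}_1}\chi^{F}_{Q_{\mathcal{I}_1}}
=\chi_{\mathcal{I}_1},
\]
which is exactly the commutativity asserted by the diagram.

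Because both Lemma~\ref{prop_homo_alg_fun} and Lemma~\ref{prop_homo_alg_fun_1} are already available, there is no real obstacle remaining; the proof is a short diagram chase combining them with the transitivity $\tau_{\mathcal{I}_1}=\tau_{\mathcal{I}_2}\circ\tau_{\mathcal{I}_1,\mathcal{I}_2}$. The only subtlety worth pointing out is that each of $\chi^{F}_{Q_{\mathcal{I}}}$, $\sigma^{\ast}_{\mathcal{I}}$, and $(\tau_{\mathcal{I}})_{!}$ must be interpreted in its graded form (summed over all dimension vectors of $\bfV$ with the appropriate induced dimension vectors on $\bfV_{\mathcal{I}}$), so the $\Phi_{\mathcal{I}_1,\mathcal{I}_2}$ and $\Phi^{F}_{\mathcal{I}_1,\mathcal{I}_2}$ appearing on the two sides of the lemmas are referring to the same graded pieces; this is immediate from the construction of $\bfV_{\mathcal{I}}$ in Section~\ref{top}, and so the diagram chase is rigorous.
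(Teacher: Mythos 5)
Your proof is correct and follows the same route the paper takes: the paper's own proof simply cites Corollary \ref{cor_sub_alg} together with Lemmas \ref{prop_homo_alg_fun} and \ref{prop_homo_alg_fun_1}, and your diagram chase is exactly the explicit combination of those three facts (your appeal to functoriality of extension by zero, $(\tau_{\mathcal{I}_2})_!\circ(\tau_{\mathcal{I}_1,\mathcal{I}_2})_!=(\tau_{\mathcal{I}_1})_!$, is the content supplied by Corollary \ref{cor_sub_alg}). Nothing further is needed.
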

\begin{proof}
By Corollary \ref{cor_sub_alg}, Lemmas \ref{prop_homo_alg_fun} and \ref{prop_homo_alg_fun_1}, we get the desired commutative diagram.

\end{proof}

\begin{theorem}
There is a  homomorphism of $\mathbb{Z}$-algebras $$\chi_{{A_{\mathbb{R}}}}:K_{A_{\mathbb{R}}}\rightarrow\mathcal{F}_{A_{\mathbb{R}}}.$$
\end{theorem}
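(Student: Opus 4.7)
The plan is to obtain $\chi_{A_{\mathbb{R}}}$ as the map induced on the direct limit by the compatible family $\{\chi_{\mathcal{I}}\}_{\mathcal{I}}$ via the universal property of direct limits. First I would note that the poset of partitions of $\mathbb{R}$, ordered by refinement, is directed: given any two partitions $\mathcal{I}_1$ and $\mathcal{I}_2$, the partition obtained by listing in order all nonempty intersections $I\cap J$ with $I\in\mathcal{I}_1$ and $J\in\mathcal{I}_2$ is a common refinement. Hence $(K_{Q_\mathcal{I}},\Phi_{\mathcal{I}_1,\mathcal{I}_2})$ is a genuine directed system of $\mathbb{Z}[v,v^{-1}]$-algebras (with each $\Phi_{\mathcal{I}_1,\mathcal{I}_2}$ an algebra homomorphism by Lemma \ref{homo_alg_two_par}), and the direct limit $(K_{A_{\mathbb{R}}},\Phi_{\mathcal{I}})$ exists in the category of $\mathbb{Z}[v,v^{-1}]$-algebras, and a fortiori in the category of $\mathbb{Z}$-algebras.

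Next I would assemble the cocone. Each $\chi_{\mathcal{I}}:K_{Q_\mathcal{I}}\to\mathcal{F}_{A_{\mathbb{R}}}$ is a homomorphism of $\mathbb{Z}$-algebras by the proposition preceding Section \ref{sec_4.3} (which in turn rests on Lemma \ref{subalg_I} and Lemma \ref{iso_I_1}). Proposition \ref{prop_1} provides exactly the compatibility $\chi_{\mathcal{I}_2}\circ\Phi_{\mathcal{I}_1,\mathcal{I}_2}=\chi_{\mathcal{I}_1}$ whenever $\mathcal{I}_2$ refines $\mathcal{I}_1$, so $\{\chi_{\mathcal{I}}\}_{\mathcal{I}}$ is a cocone on the directed system with apex $\mathcal{F}_{A_{\mathbb{R}}}$ in the category of $\mathbb{Z}$-algebras.

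By the universal property of the direct limit, there exists a unique $\mathbb{Z}$-algebra homomorphism
$$\chi_{A_{\mathbb{R}}}:K_{A_{\mathbb{R}}}\longrightarrow\mathcal{F}_{A_{\mathbb{R}}}$$
characterized by $\chi_{A_{\mathbb{R}}}\circ\Phi_{\mathcal{I}}=\chi_{\mathcal{I}}$ for every partition $\mathcal{I}$ of $\mathbb{R}$, which is the desired map. I do not expect any genuine obstacle here: all the substantive content has already been extracted in Lemmas \ref{subalg_I}, \ref{iso_I_1}, \ref{homo_alg_two_par}, \ref{prop_homo_alg_fun}, \ref{prop_homo_alg_fun_1} and Proposition \ref{prop_1}, and the concluding step is a formal invocation of the universal property of direct limits in an algebraic category.
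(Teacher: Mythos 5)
Your proposal is correct and follows essentially the same route as the paper: the paper's proof likewise obtains $\chi_{A_{\mathbb{R}}}$ from the universal property of the direct limit $(K_{Q_\mathcal{I}},\Phi_{\mathcal{I}_1,\mathcal{I}_2})$ together with the compatibility of the maps $\chi_{\mathcal{I}}$ established in Proposition \ref{prop_1}. Your version merely spells out the directedness of the refinement order and the cocone condition, which the paper leaves implicit.
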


\begin{proof}
Since $K_{A_{\mathbb{R}}}$ is the direct limit of
$(K_{Q_\mathcal{I}},\Phi_{\mathcal{I}_1,\mathcal{I}_2})$, there is a  homomorphism of $\mathbb{Z}$-algebras
$$\chi_{{A_{\mathbb{R}}}}:K_{A_{\mathbb{R}}}\rightarrow\mathcal{F}_{A_{\mathbb{R}}}$$
by Proposition \ref{prop_1}.

\end{proof}

The homomorphism $\chi_{{A_{\mathbb{R}}}}$ of $\mathbb{Z}$-algebras
induces a homomorphism of $\mathbb{C}$-algebras $$\chi^{\mathbb{C}}_{{A_{\mathbb{R}}}}:K_{A_{\mathbb{R}}}\otimes_{\mathbb{Z}[v,v^{-1}]}\mathbb{C}\rightarrow\mathcal{F}_{A_{\mathbb{R}}}.$$ 

\begin{proposition}
The  homomorphism
$\chi^{\mathbb{C}}_{{A_{\mathbb{R}}}}$ is an epimorphism of $\mathbb{C}$-algebras. 
\end{proposition}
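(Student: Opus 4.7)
The plan is to exhibit a preimage of every element of $\mathcal{F}_{A_{\mathbb{R}}}$ by reducing, through the approximation scheme, to the finite-type epimorphism $\chi^{F}_{Q_{\mathcal{I}}}: K_{Q_{\mathcal{I}}} \otimes_{\mathbb{Z}[v,v^{-1}]} \mathbb{C} \twoheadrightarrow \mathcal{F}_{Q_{\mathcal{I}}}$ already recorded in Section 2.

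The first step is to show that $\mathcal{F}_{A_{\mathbb{R}}} = \bigcup_{\mathcal{I}} \mathcal{F}_{A_{\mathbb{R}},\mathcal{I}}$, where the unions are taken inside $\mathcal{F}_{A_{\mathbb{R}}}$ via the inclusions of Corollary \ref{cor_sub_alg}. Any $f$ in some dimension-$\nu$ component $\mathcal{F}_{G_{\bfV}}(E_{\bfV})$ is $G_{\bfV}$-invariant with finite support, so it is a $\mathbb{C}$-linear combination of orbit characteristic functions $1_{\mathcal{O}_V}$. Each orbit $\mathcal{O}_V$ corresponds to a finitely generated representation of $A_{\mathbb{R}}$ (a direct sum of finitely many interval modules) and hence lies in $E_{\bfV,\mathcal{I}_V}$ for some partition $\mathcal{I}_V$ cutting at the relevant endpoints; the common refinement $\mathcal{I}$ of the finitely many $\mathcal{I}_{V_1},\ldots,\mathcal{I}_{V_k}$ associated with the orbits in $\supp f$ then places $f$ in $\mathcal{F}_{A_{\mathbb{R}},\mathcal{I}}$.

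For any such $\mathcal{I}$, the isomorphism $\sigma_{\mathcal{I}}^{\ast}: \mathcal{F}_{Q_{\mathcal{I}}} \stackrel{\sim}{\longrightarrow} \mathcal{F}_{A_{\mathbb{R}},\mathcal{I}}$ of Lemma \ref{iso_I_1}, together with the finite-quiver epimorphism, gives
$$\chi_{\mathcal{I}}\bigl(K_{Q_{\mathcal{I}}} \otimes_{\mathbb{Z}[v,v^{-1}]} \mathbb{C}\bigr) = \sigma_{\mathcal{I}}^{\ast}\bigl(\chi^{F}_{Q_{\mathcal{I}}}(K_{Q_{\mathcal{I}}}\otimes_{\mathbb{Z}[v,v^{-1}]}\mathbb{C})\bigr) = \sigma_{\mathcal{I}}^{\ast}(\mathcal{F}_{Q_{\mathcal{I}}}) = \mathcal{F}_{A_{\mathbb{R}},\mathcal{I}}.$$
The identity $\chi_{A_{\mathbb{R}}} \circ \Phi_{\mathcal{I}} = \chi_{\mathcal{I}}$, which follows from Proposition \ref{prop_1} together with the universal property of the direct limit used in the preceding theorem, then forces
$$\mathcal{F}_{A_{\mathbb{R}},\mathcal{I}} = \chi_{\mathcal{I}}\bigl(K_{Q_{\mathcal{I}}} \otimes_{\mathbb{Z}[v,v^{-1}]}\mathbb{C}\bigr) \subset \chi^{\mathbb{C}}_{A_{\mathbb{R}}}\bigl(K_{A_{\mathbb{R}}} \otimes_{\mathbb{Z}[v,v^{-1}]} \mathbb{C}\bigr).$$
Taking the union over all partitions $\mathcal{I}$ delivers the desired surjectivity.

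The only step that is not purely formal is the assertion in the first paragraph that every orbit belongs to some stratum $E_{\bfV,\mathcal{I}}$; this rests on the Igusa-Rock-Todorov interval decomposition of finitely generated representations of $A_{\mathbb{R}}$ (equivalently, on the decomposition $E_{\bfV}=\bigcup_{\mathcal{I}} E_{\bfV,\mathcal{I}}$ recorded in Section \ref{top}) and on the fact that any two partitions admit a common refinement, which is automatic. Once this is in hand, the proof is a straightforward chase through the already-constructed isomorphism $\sigma_{\mathcal{I}}^{\ast}$, Proposition \ref{prop_1}, and the universal property defining $K_{A_{\mathbb{R}}}$.
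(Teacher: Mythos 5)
Your proof is correct and follows essentially the same route as the paper's: decompose $f$ into a finite $\mathbb{C}$-linear combination of orbit characteristic functions, place all the relevant orbits in a single stratum $E_{\bfV,\mathcal{I}}$ via a common refinement, and pull back surjectivity from the finite-type map $\chi^{F}_{Q_{\mathcal{I}}}$ through $\sigma_{\mathcal{I}}^{\ast}$ and the direct-limit compatibility $\chi_{A_{\mathbb{R}}}\circ\Phi_{\mathcal{I}}=\chi_{\mathcal{I}}$. Your version merely spells out more explicitly the step the paper leaves implicit, namely why each $1_{\mathcal{O}_{V_i}}$ lies in $\mathrm{Im}(\chi_{\mathcal{I}_i})$.
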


\begin{proof}
For any $f\in\mathcal{F}_{A_{\mathbb{R}}}$, we have $f=c_11_{\mathcal{Q}_{V_1}}+c_21_{\mathcal{Q}_{V_2}}+\cdots+c_s1_{\mathcal{Q}_{V_s}}$ with finitely generated representations $V_i$ and  real numbers $c_i$ ($1\leq i\leq s$).
For any $1\leq i\leq s$, there is a partition $\mathcal{I}_i$ of $\mathbb{R}$ such that $1_{\mathcal{Q}_{V_i}}\in\textrm{Im}(\chi_{\mathcal{I}_i})$. Then, we can construct a partition $\mathcal{I}$ of $\mathbb{R}$ such than $\mathcal{I}$ is a refinement of all $\mathcal{I}_i$. Hence, $1_{\mathcal{Q}_{V_i}}\in\textrm{Im}(\chi_{\mathcal{I}})$ and $f\in\textrm{Im}(\chi^{\mathbb{C}}_{\mathcal{I}})$.
By the constructions of $K_{A_{\mathbb{R}}}$ and $\chi^{\mathbb{C}}_{A_{\mathbb{R}}}$, we have $f\in\textrm{Im}(\chi^{\mathbb{C}}_{A_{\mathbb{R}}})$.

Hence, the $\mathbb{C}$-linear homomorphism $\chi^{\mathbb{C}}_{{A_{\mathbb{R}}}}$ is an epimorphism. 

\end{proof}

The algebra $K_{A_{\mathbb{R}}}$ is called the geometric version of Ringel-Hall algebra $\mathcal{F}_{A_{\mathbb{R}}}$ of $A_{\mathbb{R}}$.

\section{Canonical basis for a completion of $K_{A_{\mathbb{R}}}$}

\subsection{}
Let $A_\mathbb{R}$ be a continuous quiver.
Fix a dimension function $\nu$ and a $\mathbb{R}$-graded $\mathbb{F}_q$-vector space $\bfV$ such that $\underline{\dim}\bfV=\nu$.
Denote by
$\bar{\mathcal{F}}_{G_\bfV}(E_{\bfV})$
the set of $G_\bfV$-invariant $\mathbb{C}$-valued functions on $E_{\bfV}$.
Let $$\bar{\mathcal{F}}_{A_{\mathbb{R}}}=\bigoplus_{\nu}\bar{\mathcal{F}}_{G_\bfV}(E_{\bfV}).$$
Then
$\mathcal{F}_{A_{\mathbb{R}}}$ is a subspace of $\bar{\mathcal{F}}_{A_{\mathbb{R}}}$ and the embedding is denoted by
$$\theta_{A_{\mathbb{R}}}: \mathcal{F}_{A_{\mathbb{R}}}\rightarrow\bar{\mathcal{F}}_{A_{\mathbb{R}}}.$$

Let  $\mathcal{I}_1$ and $\mathcal{I}_2$ be two partitions of $\mathbb{R}$ such that the partition $\mathcal{I}_2$ is a refinement of $\mathcal{I}_1$.
The embedding $$\tau_{\mathcal{I}_1,\mathcal{I}_2}:E_{\bfV,\mathcal{I}_1}\rightarrow E_{\bfV,\mathcal{I}_2}$$
induces a linear map
$$\tau^\ast_{\mathcal{I}_1,\mathcal{I}_2}:\mathcal{F}_{G_{\bfV}}(E_{\bfV,\mathcal{I}_2})\rightarrow \mathcal{F}_{G_{\bfV}}(E_{\bfV,\mathcal{I}_1}).$$

Note that $$\tau_{\mathcal{I}_1,\mathcal{I}_2}^\ast\tau_{\mathcal{I}_2,\mathcal{I}_3}^\ast=\tau_{\mathcal{I}_1,\mathcal{I}_3}^\ast$$ for partitions $\mathcal{I}_1$, $\mathcal{I}_2$ and $\mathcal{I}_3$ of $\mathbb{R}$ such that $\mathcal{I}_2$ is a refinement of $\mathcal{I}_1$ and $\mathcal{I}_3$ is a refinement of $\mathcal{I}_2$.
Hence
$$(\mathcal{F}_{G_{\bfV}}(E_{\bfV,\mathcal{I}}),\tau_{\mathcal{I}_1,\mathcal{I}_2}^\ast)$$ is an inverse system.

\begin{proposition}\label{prop_inverse_5_1}
It holds that $(\bar{\mathcal{F}}_{G_\bfV}(E_{\bfV}),\tau^\ast_{\mathcal{I}})$ is the inverse limit of the inverse system $(\mathcal{F}_{G_{\bfV}}(E_{\bfV,\mathcal{I}}),\tau_{\mathcal{I}_1,\mathcal{I}_2}^\ast).$ \end{proposition}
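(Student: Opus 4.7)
The plan is to verify the universal property of the inverse limit directly. First I would check that the given maps $\tau^\ast_{\mathcal{I}}$ indeed land in $\mathcal{F}_{G_{\bfV}}(E_{\bfV,\mathcal{I}})$ and are compatible with the transition maps: the landing is automatic because $E_{\bfV,\mathcal{I}}$ has only finitely many $G_\bfV$-orbits (via $\sigma_{\mathcal{I}}$ these correspond to isomorphism classes of $Q_\mathcal{I}$-representations of finite type $A$ of fixed dimension vector), so any $G_\bfV$-invariant restriction is finitely supported; the compatibility $\tau^\ast_{\mathcal{I}_1,\mathcal{I}_2}\circ\tau^\ast_{\mathcal{I}_2}=\tau^\ast_{\mathcal{I}_1}$ follows from $\tau_{\mathcal{I}_1}=\tau_{\mathcal{I}_2}\circ\tau_{\mathcal{I}_1,\mathcal{I}_2}$.

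Next I would construct the inverse assignment. Given any compatible family $(f_{\mathcal{I}})$ with $\tau^\ast_{\mathcal{I}_1,\mathcal{I}_2}(f_{\mathcal{I}_2})=f_{\mathcal{I}_1}$ whenever $\mathcal{I}_2$ refines $\mathcal{I}_1$, I would define $f\colon E_\bfV\to\mathbb{C}$ by $f(\phi)=f_{\mathcal{I}}(\phi)$ for any partition $\mathcal{I}$ with $\phi\in E_{\bfV,\mathcal{I}}$. Such an $\mathcal{I}$ exists because the excerpt recalls $E_{\bfV}=\bigcup_{\mathcal{I}}E_{\bfV,\mathcal{I}}$. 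The $G_\bfV$-invariance of $f$ is inherited from each $f_{\mathcal{I}}$ since $G_\bfV$ preserves each stratum $E_{\bfV,\mathcal{I}}$, so $f$ lies in $\bar{\mathcal{F}}_{G_{\bfV}}(E_{\bfV})$.

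The key step, and the main obstacle, is well-definedness of $f$: if $\phi\in E_{\bfV,\mathcal{I}_1}\cap E_{\bfV,\mathcal{I}_2}$ then one must show $f_{\mathcal{I}_1}(\phi)=f_{\mathcal{I}_2}(\phi)$. To handle this I would exhibit a common refinement $\mathcal{I}_3$ of $\mathcal{I}_1$ and $\mathcal{I}_2$ (formed, for instance, by taking the partition generated by all endpoints of $\mathcal{I}_1$ and $\mathcal{I}_2$ together, with the inherited boundary conventions). From the definition of $E_{\bfV,\mathcal{I}}$ as the locus on which $\phi_{xy}$ is invertible inside each interval, refining the partition only shrinks the intervals, so $E_{\bfV,\mathcal{I}_1}\subset E_{\bfV,\mathcal{I}_3}$ and $E_{\bfV,\mathcal{I}_2}\subset E_{\bfV,\mathcal{I}_3}$. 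Then the compatibility relation evaluated at $\phi$ gives $f_{\mathcal{I}_1}(\phi)=(\tau^\ast_{\mathcal{I}_1,\mathcal{I}_3}f_{\mathcal{I}_3})(\phi)=f_{\mathcal{I}_3}(\phi)$, and identically $f_{\mathcal{I}_2}(\phi)=f_{\mathcal{I}_3}(\phi)$, so the two agree.

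Finally I would check the two required round-trip identities. For $f\in\bar{\mathcal{F}}_{G_\bfV}(E_\bfV)$, the compatible family $(\tau^\ast_{\mathcal{I}}f)$ clearly reassembles to $f$ under the construction above, since $f$ and the glued function agree pointwise on each $E_{\bfV,\mathcal{I}}$ and their union covers $E_\bfV$. Conversely, starting from a compatible family $(f_{\mathcal{I}})$, the glued $f$ satisfies $\tau^\ast_{\mathcal{I}}f=f_{\mathcal{I}}$ by construction. Uniqueness of the map $X\to\bar{\mathcal{F}}_{G_\bfV}(E_\bfV)$ in the universal property follows because any two global functions agreeing on every $E_{\bfV,\mathcal{I}}$ agree on their union. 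These steps together establish the inverse limit property; none of them is deep, the only real content being the common-refinement argument for well-definedness.
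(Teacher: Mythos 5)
Your proof is correct, and it supplies exactly the details that the paper omits: the paper's own proof is just ``This result is obvious by definitions.'' You correctly isolate the two points that actually require an argument --- that restriction to $E_{\bfV,\mathcal{I}}$ lands in \emph{finitely supported} invariant functions (because $E_{\bfV,\mathcal{I}}$ has finitely many $G_\bfV$-orbits via $\sigma_{\mathcal{I}}$), and that the poset of partitions is directed so the gluing is well defined --- and both are handled properly.
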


\begin{proof}
This result is obvious by definitions.

\end{proof}

\subsection{}
Let  $\mathcal{I}_1$ and $\mathcal{I}_2$ be two partitions of $\mathbb{R}$ such that the partition $\mathcal{I}_2$ is a refinement of $\mathcal{I}_1$.

By the following correspondence (\ref{corres_1}) in Section \ref{sec_4.3}
$$
\xymatrix{E_{{\bbV}_{\mathcal{I}_1}}&\hat{E}\ar[l]_-{r_1}\ar[r]^-{r_2}&E_{{\bbV}_{\mathcal{I}_2}}},
$$
there is  a functor 
$$\Psi_{\mathcal{I}_1,\mathcal{I}_2}={r_1}_\flat r^\ast_2 :\cD_{G_{\bbV_{\mathcal{I}_2}}}^b(E_{{\bbV}_{\mathcal{I}_2}})\rightarrow\cD_{G_{\bbV_{\mathcal{I}_1}}}^b(E_{{\bbV}_{\mathcal{I}_1}}),$$
where ${r_1}_\flat$ is the inverse of 
$r^\ast_1$.
Consider all dimension vectors, we get a
homomorphism of $\mathbb{Z}[v,v^{-1}]$-modules
$$\Psi_{\mathcal{I}_1,\mathcal{I}_2}:K_{Q_{\mathcal{I}_2}}\rightarrow K_{Q_{\mathcal{I}_1}}.$$
Note that $\Psi_{\mathcal{I}_1,\mathcal{I}_2}\Phi_{\mathcal{I}_1,\mathcal{I}_2}=id$.

Note that $$\Psi_{\mathcal{I}_1,\mathcal{I}_2}\Psi_{\mathcal{I}_2,\mathcal{I}_3}=\Psi_{\mathcal{I}_1,\mathcal{I}_3}$$ for partitions $\mathcal{I}_1$, $\mathcal{I}_2$ and $\mathcal{I}_3$ of $\mathbb{R}$ such that $\mathcal{I}_2$ is a refinement of $\mathcal{I}_1$ and $\mathcal{I}_3$ is a refinement of $\mathcal{I}_2$.
Hence, 
$$(K_{Q_\mathcal{I}},\Psi_{\mathcal{I}_1,\mathcal{I}_2})$$ is an inverse system.
Let $$(\bar{K}_{A_{\mathbb{R}}},\Psi_{\mathcal{I}})$$ be the inverse limit of this inverse system, where $\bar{K}_{A_{\mathbb{R}}}$ is a
$\mathbb{Z}[v,v^{-1}]$-module and 
$\Psi_{\mathcal{I}}:\bar{K}_{A_{\mathbb{R}}}\rightarrow K_{Q_\mathcal{I}}$ is a homomorphsim.

Similarly, by the following correspondence (\ref{corres_2}) over $\mathbb{F}_q$ in Section \ref{sec_4.3}
$$
\xymatrix{E_{{\bfV}_{\mathcal{I}_1}}&\hat{E}\ar[l]_-{r_1}\ar[r]^-{r_2}&E_{{\bfV}_{\mathcal{I}_2}}},
$$
we have
 a linear map
$$\Psi^F_{\mathcal{I}_1,\mathcal{I}_2}=(r_1^\ast)^{-1}r_2^\ast:\mathcal{F}_{G_{{\bfV}_{\mathcal{I}_2}}}(E_{{\bfV}_{\mathcal{I}_2}})\rightarrow\mathcal{F}_{G_{{\bfV}_{\mathcal{I}_1}}}(E_{{\bfV}_{\mathcal{I}_1}}).$$
Consider all dimension vectors, we get
a linear map
$$\Psi^F_{\mathcal{I}_1,\mathcal{I}_2}:\mathcal{F}_{Q_{\mathcal{I}_2}}\rightarrow \mathcal{F}_{Q_{\mathcal{I}_1}}.$$

\begin{lemma}\label{prop_5.1}
For any partitions $\mathcal{I}_1$ and $\mathcal{I}_2$ of $\mathbb{R}$ such that $\mathcal{I}_2$ is a refinement of $\mathcal{I}_1$,
the follwoing diagram is commutative
$$\xymatrix{
K_{Q_{\mathcal{I}_2}}\ar[r]^-{\Psi_{\mathcal{I}_1,\mathcal{I}_2}}\ar[d]_-{\chi^F_{Q_{\mathcal{I}_2}}}&K_{Q_{\mathcal{I}_1}}\ar[d]^-{\chi^F_{Q_{\mathcal{I}_1}}}\\
\mathcal{F}_{Q_{\mathcal{I}_2}}\ar[r]^-{\Psi^F_{\mathcal{I}_1,\mathcal{I}_2}}&\mathcal{F}_{Q_{\mathcal{I}_1}}.
}$$
\end{lemma}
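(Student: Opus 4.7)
The plan is to unfold the two compositions $\Psi_{\mathcal{I}_1,\mathcal{I}_2}={r_1}_\flat r_2^\ast$ and $\Psi^F_{\mathcal{I}_1,\mathcal{I}_2}=(r_1^\ast)^{-1}r_2^\ast$ and check that each elementary piece ($r_2^\ast$, $r_1^\ast$, and the inversions) commutes with the trace map $\chi^F$ given by the sheaf-function correspondence.

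First I would handle the pullback along $r_2$. Since $r_2:\hat{E}\to E_{\bbV_{\mathcal{I}_2}}$ is a $G_{\bbV_{\mathcal{I}_2}}$-equivariant open immersion (both source and target have compatible $\mathbb{F}_q$-structures), the standard compatibility of the trace map with pullback gives
$\chi^F_{r_2^\ast\mathcal{L}}=r_2^\ast\chi^F_{\mathcal{L}}$
for any $\mathcal{L}\in\cD^b_{G_{\bbV_{\mathcal{I}_2}}}(E_{\bbV_{\mathcal{I}_2}})$; see \cite{Kiehl_Weissauer_Weil_conjectures_perverse_sheaves_and_l'adic_Fourier_transform}. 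This yields a commutative square relating $K$ and $\mathcal{F}$ over $\hat{E}$.

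Next I would treat the inversion of $r_1^\ast$. The map $r_1:\hat{E}\to E_{\bbV_{\mathcal{I}_1}}$ is a principal $G_{\bbV_{\mathcal{I}_2-\mathcal{I}_1}}$-bundle, so $r_1^\ast$ is an equivalence with inverse $({r_1})_\flat$ on the sheaf side, and a bijection with inverse $(r_1^\ast)^{-1}$ on the function side. Since $\chi^F$ commutes with $r_1^\ast$ by the same sheaf-function correspondence, it also commutes with the respective inverses; concretely, for $\mathcal{M}\in\cD^b_{G_{\bbV_{\mathcal{I}_2}}}(\hat{E})$, writing $\mathcal{M}=r_1^\ast\mathcal{N}$ we get
$\chi^F_{({r_1})_\flat\mathcal{M}}=\chi^F_{\mathcal{N}}=(r_1^\ast)^{-1}\chi^F_{r_1^\ast\mathcal{N}}=(r_1^\ast)^{-1}\chi^F_{\mathcal{M}}.$
Composing this with the previous step yields
$\chi^F_{Q_{\mathcal{I}_1}}\circ\Psi_{\mathcal{I}_1,\mathcal{I}_2}=\Psi^F_{\mathcal{I}_1,\mathcal{I}_2}\circ\chi^F_{Q_{\mathcal{I}_2}},$
which is exactly the commutativity claimed in the diagram. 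Taking the direct sum over all dimension vectors finishes the proof.

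The argument is largely formal once the right pieces are in place; the main subtlety is verifying that $\chi^F$ genuinely commutes with the sheaf-theoretic inverse ${r_1}_\flat$ at the level of Grothendieck groups (not merely with the underived $r_1^\ast$). This follows by choosing a preimage under the equivalence $r_1^\ast$ and applying the pullback compatibility, but it is the step that must be spelled out carefully, exactly parallel to the verification used in the proof of Lemma \ref{prop_homo_alg_fun}.
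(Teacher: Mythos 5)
Your argument is correct and is essentially the paper's proof spelled out in full: the paper simply invokes ``the properties of the sheaf--function correspondence,'' and your two steps (compatibility of $\chi^F$ with $r_2^\ast$, and the formal transfer of that compatibility from $r_1^\ast$ to its inverses ${r_1}_\flat$ and $(r_1^\ast)^{-1}$ via choosing a preimage under the equivalence) are exactly the properties being invoked. No gap; the only cosmetic remark is that the pullback compatibility of the trace map holds for arbitrary morphisms, so you do not actually need $r_2$ to be an open immersion.
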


\begin{proof}
By the properties of sheaf-function correspondence (\cite{Kiehl_Weissauer_Weil_conjectures_perverse_sheaves_and_l'adic_Fourier_transform}), we can get this commutative diagram.

\end{proof}

On the relations between $\Psi^F_{\mathcal{I}_1,\mathcal{I}_2}$ and $\tau^\ast_{\mathcal{I}_1,\mathcal{I}_2}$, we have the following lemma.

\begin{lemma}\label{prop_5.2}
For any partitions $\mathcal{I}_1$ and $\mathcal{I}_2$ of $\mathbb{R}$ such that $\mathcal{I}_2$ is a refinement of $\mathcal{I}_1$, we have the following commutative diagram
$$\xymatrix{
\mathcal{F}_{Q_{\mathcal{I}_2}}\ar[r]^-{\Psi^F_{\mathcal{I}_1,\mathcal{I}_2}}\ar[d]^-{\sigma_{\mathcal{I}_2}^\ast}&\mathcal{F}_{Q_{\mathcal{I}_1}}\ar[d]^-{\sigma_{\mathcal{I}_1}^\ast}
\\
\mathcal{F}_{A_{\mathbb{R}},{\mathcal{I}_2}}\ar[r]^-{\tau_{\mathcal{I}_1,\mathcal{I}_2}^\ast}&\mathcal{F}_{A_{\mathbb{R}},{\mathcal{I}_1}}
.}$$
\end{lemma}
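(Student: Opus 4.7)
The plan is to verify the commutativity pointwise: take an arbitrary $f \in \mathcal{F}_{Q_{\mathcal{I}_2}}$ and a point $\phi \in E_{\bfV,\mathcal{I}_1}$, then compute $(\tau^\ast_{\mathcal{I}_1,\mathcal{I}_2}\sigma^\ast_{\mathcal{I}_2}(f))(\phi)$ and $(\sigma^\ast_{\mathcal{I}_1}\Psi^F_{\mathcal{I}_1,\mathcal{I}_2}(f))(\phi)$ separately, and show the two numbers agree. Since $\tau_{\mathcal{I}_1,\mathcal{I}_2}$ is simply the inclusion $E_{\bfV,\mathcal{I}_1}\hookrightarrow E_{\bfV,\mathcal{I}_2}$, the right-hand side immediately reduces to $f(\sigma_{\mathcal{I}_2}(\phi))$.

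For the left-hand side, I first unwind $\Psi^F_{\mathcal{I}_1,\mathcal{I}_2}(f) = (r_1^\ast)^{-1}r_2^\ast(f)$. Because $r_1\colon \hat{E}\to E_{\bfV_{\mathcal{I}_1}}$ is a principal $G_{\bfV_{\mathcal{I}_2-\mathcal{I}_1}}$-bundle (as recorded in Section \ref{sec_4.3}), the pullback $r_1^\ast$ is a bijection between $G_{\bfV_{\mathcal{I}_1}}$-invariant functions on $E_{\bfV_{\mathcal{I}_1}}$ and $G_{\bfV_{\mathcal{I}_2}}$-invariant functions on $\hat{E}$; hence the value of $\Psi^F_{\mathcal{I}_1,\mathcal{I}_2}(f)$ at $\sigma_{\mathcal{I}_1}(\phi)$ can be computed by picking \emph{any} preimage $\psi\in r_1^{-1}(\sigma_{\mathcal{I}_1}(\phi))$ and returning $f(r_2(\psi))=f(\psi)$. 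The remaining step is therefore to exhibit a canonical such preimage.

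The key geometric lemma is the identity
\begin{equation*}
r_1(\sigma_{\mathcal{I}_2}(\phi)) = \sigma_{\mathcal{I}_1}(\phi) \quad \text{for every } \phi\in E_{\bfV,\mathcal{I}_1}.
\end{equation*}
To establish this, I first note that since $\mathcal{I}_2$ refines $\mathcal{I}_1$, any $\phi\in E_{\bfV,\mathcal{I}_1}$ automatically lies in $E_{\bfV,\mathcal{I}_2}$, and moreover the components $\sigma_{\mathcal{I}_2}(\phi)_{h_{ij}}$ corresponding to arrows \emph{internal} to a block of $\mathcal{I}_1$ are isomorphisms; in particular $\sigma_{\mathcal{I}_2}(\phi)\in\hat{E}$. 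The identity then follows from the definition of $r_1$ as the composition of internal $\phi_{ij}$'s (and their inverses) along each block $I_i\in\mathcal{I}_1$: composing the piecewise-linear factors that assemble $\sigma_{\mathcal{I}_2}(\phi)_{h_{ij}}$ telescopes precisely to the four cases of $\sigma_{\mathcal{I}_1}(\phi)_{h_i}$ listed in Section \ref{top}. This is the routine but finicky step, and it is really the only place where anything happens.

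With the identity in hand the proof closes: taking $\psi:=\sigma_{\mathcal{I}_2}(\phi)\in r_1^{-1}(\sigma_{\mathcal{I}_1}(\phi))$ yields
\begin{equation*}
(\sigma^\ast_{\mathcal{I}_1}\Psi^F_{\mathcal{I}_1,\mathcal{I}_2}(f))(\phi) = f(\sigma_{\mathcal{I}_2}(\phi)) = (\tau^\ast_{\mathcal{I}_1,\mathcal{I}_2}\sigma^\ast_{\mathcal{I}_2}(f))(\phi),
\end{equation*}
which is the asserted commutativity. The main obstacle is purely bookkeeping: verifying that the four-case definition of $\sigma_{\mathcal{I}}$ is compatible with the telescoping prescribed by $r_1$ across every block of the coarser partition. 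This is essentially the same kind of verification already carried out in the proof of Lemma \ref{prop_homo_alg_fun_1}, and I would cite that lemma's pattern to shorten the exposition rather than re-enumerate the four geometric cases.
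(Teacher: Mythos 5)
Your proof is correct and follows essentially the same route as the paper's: both reduce the commutativity to the compatibility of $r_1\circ\sigma_{\mathcal{I}_2}$ with $\sigma_{\mathcal{I}_1}$ on $E_{\bfV,\mathcal{I}_1}$ (the paper phrases this as an isomorphism $V_1\cong V_3$ of the two representations of $Q_{\mathcal{I}_1}$ and checks the identity on orbit characteristic functions, while you evaluate arbitrary functions pointwise, which also lets you skip the paper's separate case $\phi\notin E_{\bfV,\mathcal{I}_1}$). The only caveat is that your key identity $r_1(\sigma_{\mathcal{I}_2}(\phi))=\sigma_{\mathcal{I}_1}(\phi)$ holds on the nose only after identifying the graded pieces $\bfV_{b_i}$ and $\bfV_{b_{i1}}$ attached to a block of $\mathcal{I}_1$ and to its first subblock in $\mathcal{I}_2$; without such an identification one only gets equality of $G_{\bfV_{\mathcal{I}_1}}$-orbits, which is exactly what the paper asserts and is all that the $G_{\bfV_{\mathcal{I}_1}}$-invariance of $\Psi^F_{\mathcal{I}_1,\mathcal{I}_2}(f)$ requires for your computation to close.
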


\begin{proof}
Let $V=(\bfV,\phi)$ be a representation of $A_{\mathbb{R}}$ such that $\phi\in E_{\bfV,\mathcal{I}_2}$.

If $\phi\not\in E_{\bfV,\mathcal{I}_1}$, then $$\tau_{\mathcal{I}_1,\mathcal{I}_2}^\ast(1_{\mathcal{O}_V})=0$$
and
$$\Psi^F_{\mathcal{I}_1,\mathcal{I}_2}(\sigma_{\mathcal{I}_2}^\ast)^{-1}(1_{\mathcal{O}_V})=0.$$
Hence, $$\Psi^F_{\mathcal{I}_1,\mathcal{I}_2}(\sigma_{\mathcal{I}_2}^\ast)^{-1}(1_{\mathcal{O}_V})=(\sigma_{\mathcal{I}_1}^\ast)^{-1}\tau_{\mathcal{I}_1,\mathcal{I}_2}^\ast(1_{\mathcal{O}_V}).$$
    
Assume that $\phi\in E_{\bfV,\mathcal{I}_1}$.
Let $V_1=(\bfV_{\mathcal{I}_1},\sigma_{\mathcal{I}_1}(\phi))$ and $V_2=(\bfV_{\mathcal{I}_2},\sigma_{\mathcal{I}_2}(\phi))$.
Note that $\sigma_{\mathcal{I}_2}(\phi)\in\hat{E}$ and let
$V_3=(\bfV_{\mathcal{I}_1},r_1\sigma_{\mathcal{I}_2}(\phi))$.
Then, we have
$$(\sigma_{\mathcal{I}_1}^\ast)^{-1}\tau_{\mathcal{I}_1,\mathcal{I}_2}^\ast(1_{\mathcal{O}_V})=(\sigma_{\mathcal{I}_1}^\ast)^{-1}(1_{\mathcal{O}_V})=1_{\mathcal{O}_{V_1}}$$
and
$$\Psi^F_{\mathcal{I}_1,\mathcal{I}_2}(\sigma_{\mathcal{I}_2}^\ast)^{-1}(1_{\mathcal{O}_V})=\Psi^F_{\mathcal{I}_1,\mathcal{I}_2}(1_{\mathcal{O}_{V_2}})=1_{\mathcal{O}_{V_3}}.$$
Since $V_1\cong V_3$, we have $1_{\mathcal{O}_{V_1}}=1_{\mathcal{O}_{V_3}}$
and 
$$\Psi^F_{\mathcal{I}_1,\mathcal{I}_2}(\sigma_{\mathcal{I}_2}^\ast)^{-1}(1_{\mathcal{O}_V})=(\sigma_{\mathcal{I}_1}^\ast)^{-1}\tau_{\mathcal{I}_1,\mathcal{I}_2}^\ast(1_{\mathcal{O}_V}).$$

Hence, we have
$$\Psi^F_{\mathcal{I}_1,\mathcal{I}_2}(\sigma_{\mathcal{I}_2}^\ast)^{-1}=(\sigma_{\mathcal{I}_1}^\ast)^{-1}\tau_{\mathcal{I}_1,\mathcal{I}_2}^\ast$$
and
$$\sigma_{\mathcal{I}_1}^\ast\Psi^F_{\mathcal{I}_1,\mathcal{I}_2}=\tau_{\mathcal{I}_1,\mathcal{I}_2}^\ast\sigma_{\mathcal{I}_2}^\ast.$$
   
\end{proof}

\begin{proposition}\label{prop_3}
For any partitions $\mathcal{I}_1$ and $\mathcal{I}_2$ of $\mathbb{R}$ such that $\mathcal{I}_2$ is a refinement of $\mathcal{I}_1$, we have the following commutative diagram
$$\xymatrix{K_{Q_{\mathcal{I}_2}}\ar[r]^-{\Psi_{\mathcal{I}_1,\mathcal{I}_2}}\ar[d]_-{\chi_{\mathcal{I}_2}}&K_{Q_{\mathcal{I}_1}}\ar[d]^-{\chi_{\mathcal{I}_1}}\\
\mathcal{F}_{Q_{\mathcal{I}_2}}\ar[r]^-{\tau_{\mathcal{I}_1,\mathcal{I}_2}^\ast}&\mathcal{F}_{Q_{\mathcal{I}_1}}.}$$
\end{proposition}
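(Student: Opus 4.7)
The plan is to deduce the commutativity of the square in Proposition \ref{prop_3} by pasting together the two commutative squares already supplied by Lemmas \ref{prop_5.1} and \ref{prop_5.2}, in direct analogy with how Proposition \ref{prop_1} is obtained from Lemmas \ref{prop_homo_alg_fun} and \ref{prop_homo_alg_fun_1}. The key observation is that each vertical map $\chi_{\mathcal{I}}$ factors, by its very definition in Section 4, as
$$
\chi_{\mathcal{I}}=\sigma_{\mathcal{I}}^\ast\circ\chi^F_{Q_{\mathcal{I}}},
$$
so the square to be verified decomposes vertically into a stack of two squares, one governed by the sheaf-function correspondence and one by the comparison between the approximating quiver and the continuous quiver.

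Concretely, I would start from an arbitrary class $[\mathcal{L}]\in K_{Q_{\mathcal{I}_2}}$ and compute $\tau_{\mathcal{I}_1,\mathcal{I}_2}^\ast\circ\chi_{\mathcal{I}_2}([\mathcal{L}])$. Expanding the definition of $\chi_{\mathcal{I}_2}$, this equals $\tau_{\mathcal{I}_1,\mathcal{I}_2}^\ast\circ\sigma_{\mathcal{I}_2}^\ast\circ\chi^F_{Q_{\mathcal{I}_2}}([\mathcal{L}])$. Applying Lemma \ref{prop_5.2} to the left two factors rewrites this as $\sigma_{\mathcal{I}_1}^\ast\circ\Psi^F_{\mathcal{I}_1,\mathcal{I}_2}\circ\chi^F_{Q_{\mathcal{I}_2}}([\mathcal{L}])$. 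Then applying Lemma \ref{prop_5.1} to the right two factors rewrites this as $\sigma_{\mathcal{I}_1}^\ast\circ\chi^F_{Q_{\mathcal{I}_1}}\circ\Psi_{\mathcal{I}_1,\mathcal{I}_2}([\mathcal{L}])$, which by the definition of $\chi_{\mathcal{I}_1}$ equals $\chi_{\mathcal{I}_1}\circ\Psi_{\mathcal{I}_1,\mathcal{I}_2}([\mathcal{L}])$. This chain of equalities is exactly the commutativity asserted.

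Since both Lemmas \ref{prop_5.1} and \ref{prop_5.2} are already established, the argument reduces to a mechanical diagram chase and I do not anticipate a substantive obstacle. The only point requiring care is to match the directions of the arrows: the functors $\Psi_{\mathcal{I}_1,\mathcal{I}_2}$ and $\Psi^F_{\mathcal{I}_1,\mathcal{I}_2}$ go from the finer partition $\mathcal{I}_2$ to the coarser partition $\mathcal{I}_1$, which correctly mirrors the direction of $\tau_{\mathcal{I}_1,\mathcal{I}_2}^\ast$ on the function side, so the two auxiliary squares glue along a common edge to give the single square in the statement. The proof itself will be a one-line citation of the two lemmas, as in the proof of Proposition \ref{prop_1}.
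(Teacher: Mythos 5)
Your argument is correct and is exactly the paper's proof: the paper also obtains Proposition \ref{prop_3} by stacking the square of Lemma \ref{prop_5.1} (sheaf--function compatibility of $\Psi_{\mathcal{I}_1,\mathcal{I}_2}$ with $\Psi^F_{\mathcal{I}_1,\mathcal{I}_2}$) on the square of Lemma \ref{prop_5.2} (compatibility of $\Psi^F_{\mathcal{I}_1,\mathcal{I}_2}$ with $\tau^\ast_{\mathcal{I}_1,\mathcal{I}_2}$ via $\sigma^\ast_{\mathcal{I}}$), using the factorization $\chi_{\mathcal{I}}=\sigma_{\mathcal{I}}^\ast\circ\chi^F_{Q_{\mathcal{I}}}$. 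Your version merely spells out the diagram chase that the paper leaves implicit.
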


\begin{proof}
This result is implied by Lemmas \ref{prop_5.1} and \ref{prop_5.2}.

\end{proof}

\begin{theorem}\label{thm_5.5}
There is a  $\mathbb{Z}$-linear map
$$\bar{\chi}_{{A_{\mathbb{R}}}}:\bar{K}_{A_{\mathbb{R}}}\rightarrow\bar{\mathcal{F}}_{A_{\mathbb{R}}}$$such that the following diagram is commutative
\begin{equation}\label{diagram_1}
\xymatrix{\bar{K}_{A_{\mathbb{R}}}\ar[r]^-{\Psi_\mathcal{I}}\ar[d]_-{\bar{\chi}_{{A_{\mathbb{R}}}}}&K_{Q_\mathcal{I}}\ar[d]^-{\chi^F_{Q_\mathcal{I}}}\\
\bar{\mathcal{F}}_{A_{\mathbb{R}}}\ar[r]^-{(\sigma^{\ast}_{\mathcal{I}})^{-1}\tau^\ast_{\mathcal{I}}}&\mathcal{F}_{Q_\mathcal{I}}.}   
\end{equation}
\end{theorem}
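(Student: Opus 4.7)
The plan is to invoke the universal property of the inverse limit after assembling the compatibility data already established in Section~5. In broad strokes, the preceding lemmas have been organized precisely so that $\chi^F_{Q_\mathcal{I}}\circ\Psi_\mathcal{I}$ forms a compatible cone on the inverse system identifying $\bar{\mathcal{F}}_{A_\mathbb{R}}$, and the desired $\bar{\chi}_{A_\mathbb{R}}$ arises as the unique induced map.

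First I would re-express $\bar{\mathcal{F}}_{A_\mathbb{R}}$ as an inverse limit of the system $(\mathcal{F}_{Q_\mathcal{I}},\Psi^F_{\mathcal{I}_1,\mathcal{I}_2})$. By Proposition~\ref{prop_inverse_5_1}, $\bar{\mathcal{F}}_{A_\mathbb{R}}$ is the inverse limit of $(\mathcal{F}_{G_\bfV}(E_{\bfV,\mathcal{I}}),\tau^\ast_{\mathcal{I}_1,\mathcal{I}_2})$ with projections $\tau^\ast_\mathcal{I}$. By Lemma~\ref{iso_I_1}, each $\sigma^\ast_\mathcal{I}\colon\mathcal{F}_{Q_\mathcal{I}}\to\mathcal{F}_{A_\mathbb{R},\mathcal{I}}$ is an isomorphism, and by Lemma~\ref{prop_5.2} these isomorphisms intertwine the two families of transition maps, i.e.\ $\tau^\ast_{\mathcal{I}_1,\mathcal{I}_2}\sigma^\ast_{\mathcal{I}_2}=\sigma^\ast_{\mathcal{I}_1}\Psi^F_{\mathcal{I}_1,\mathcal{I}_2}$. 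Therefore $(\bar{\mathcal{F}}_{A_\mathbb{R}},(\sigma^\ast_\mathcal{I})^{-1}\tau^\ast_\mathcal{I})$ is an inverse limit of $(\mathcal{F}_{Q_\mathcal{I}},\Psi^F_{\mathcal{I}_1,\mathcal{I}_2})$.

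Second, I would check that the maps $f_\mathcal{I}:=\chi^F_{Q_\mathcal{I}}\circ\Psi_\mathcal{I}\colon\bar{K}_{A_\mathbb{R}}\to\mathcal{F}_{Q_\mathcal{I}}$ assemble into a cone over this inverse system. Indeed, using that $(\bar{K}_{A_\mathbb{R}},\Psi_\mathcal{I})$ is the inverse limit of $(K_{Q_\mathcal{I}},\Psi_{\mathcal{I}_1,\mathcal{I}_2})$, hence $\Psi_{\mathcal{I}_1,\mathcal{I}_2}\Psi_{\mathcal{I}_2}=\Psi_{\mathcal{I}_1}$, and that by Lemma~\ref{prop_5.1} the sheaf-function correspondence intertwines $\Psi$ and $\Psi^F$, i.e.\ $\chi^F_{Q_{\mathcal{I}_1}}\Psi_{\mathcal{I}_1,\mathcal{I}_2}=\Psi^F_{\mathcal{I}_1,\mathcal{I}_2}\chi^F_{Q_{\mathcal{I}_2}}$, I get
\[
\Psi^F_{\mathcal{I}_1,\mathcal{I}_2}f_{\mathcal{I}_2}=\Psi^F_{\mathcal{I}_1,\mathcal{I}_2}\chi^F_{Q_{\mathcal{I}_2}}\Psi_{\mathcal{I}_2}=\chi^F_{Q_{\mathcal{I}_1}}\Psi_{\mathcal{I}_1,\mathcal{I}_2}\Psi_{\mathcal{I}_2}=\chi^F_{Q_{\mathcal{I}_1}}\Psi_{\mathcal{I}_1}=f_{\mathcal{I}_1},
\]
which is exactly the cone compatibility.

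Finally, the universal property of the inverse limit yields a unique $\mathbb{Z}$-linear map $\bar{\chi}_{A_\mathbb{R}}\colon\bar{K}_{A_\mathbb{R}}\to\bar{\mathcal{F}}_{A_\mathbb{R}}$ with $(\sigma^\ast_\mathcal{I})^{-1}\tau^\ast_\mathcal{I}\circ\bar{\chi}_{A_\mathbb{R}}=f_\mathcal{I}=\chi^F_{Q_\mathcal{I}}\circ\Psi_\mathcal{I}$ for every partition $\mathcal{I}$, which is precisely the commutativity of diagram~\eqref{diagram_1}. The only mildly delicate point is Step~1, where one has to check carefully that the indexing poset of refinements is filtered enough for the identification of inverse limits through the isomorphisms $\sigma^\ast_\mathcal{I}$; beyond that the argument is a straightforward diagram chase, since the preceding lemmas have already supplied all the needed intertwining identities.
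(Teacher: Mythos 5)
Your proposal is correct and follows essentially the same route as the paper: identify $(\bar{\mathcal{F}}_{A_{\mathbb{R}}},(\sigma^{\ast}_{\mathcal{I}})^{-1}\tau^\ast_{\mathcal{I}})$ as the inverse limit of $(\mathcal{F}_{Q_\mathcal{I}},\Psi^F_{\mathcal{I}_1,\mathcal{I}_2})$ via Proposition \ref{prop_inverse_5_1} and Lemma \ref{prop_5.2}, verify that $\chi^F_{Q_\mathcal{I}}\Psi_{\mathcal{I}}$ is a compatible cone (which is exactly the content the paper delegates to Proposition \ref{prop_3}, itself a consequence of Lemmas \ref{prop_5.1} and \ref{prop_5.2}), and invoke the universal property of the inverse limit. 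You have merely written out explicitly the diagram chase that the paper compresses into citations.
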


\begin{proof}
By Proposition \ref{prop_inverse_5_1}, $(\bar{\mathcal{F}}_{A_{\mathbb{R}}},(\sigma^{\ast}_{\mathcal{I}})^{-1}\tau^\ast_{\mathcal{I}})$ is the inverse limit of $(\mathcal{F}_{Q_\mathcal{I}},\Psi^F_{\mathcal{I}_1,\mathcal{I}_2}).$
At the same time, $(\bar{K}_{A_{\mathbb{R}}},\Psi_{\mathcal{I}})$ is the inverse limit of 
$(K_{Q_\mathcal{I}},\Psi_{\mathcal{I}_1,\mathcal{I}_2})$. By Proposition \ref{prop_3},
there is a $\mathbb{Z}$-linear map$$\bar{\chi}_{{A_{\mathbb{R}}}}:\bar{K}_{A_{\mathbb{R}}}\rightarrow\bar{\mathcal{F}}_{A_{\mathbb{R}}}$$such that the diagram (\ref{diagram_1}) is commutative.

\end{proof}

Since ${\chi^F_{Q_\mathcal{I}}}:K_{Q_\mathcal{I}}\otimes_{\mathbb{Z}[v,v^{-1}]}\mathbb{C}\rightarrow\mathcal{F}_{Q_\mathcal{I}}$ is surjective for all $\mathcal{I}$, the $\mathbb{Z}$-linear map
$\bar{\chi}_{{A_{\mathbb{R}}}}$ induces a surjective $\mathbb{C}$-linear map
$$\bar{\chi}^{\mathbb{C}}_{{A_{\mathbb{R}}}}:\bar{K}_{A_{\mathbb{R}}}\otimes_{\mathbb{Z}[v,v^{-1}]}\mathbb{C}\rightarrow\bar{\mathcal{F}}_{A_{\mathbb{R}}}.$$ 

\subsection{}

For any $A\in K_{A_{\mathbb{R}}}$ and a partition $\mathcal{I}$, there exists $\mathcal{I}_1$ such that $A\in \textrm{Im}(\Phi_{\mathcal{I}_1})$ and
$\mathcal{I}_1$ is a 
refinement of $\mathcal{I}$.
Let $A=\Phi_{\mathcal{I}_1}(A_1)$ and define $$\Theta_{\mathcal{I}}(A)=\Psi_{\mathcal{I},\mathcal{I}_1}(A_1).$$
Let $\mathcal{I}_2$ be another partition such that $A\in \textrm{Im}(\Phi_{\mathcal{I}_2})$ and $\mathcal{I}_2$ is a 
refinement of $\mathcal{I}$.
Assume that $A=\Phi_{\mathcal{I}_1}(A_2)$.
Let $\mathcal{I}_3$ be a partition such that $\mathcal{I}_3$ is 
a refinement of $\mathcal{I}_1$ and $\mathcal{I}_2$. Then, $\Phi_{\mathcal{I}_1,\mathcal{I}_3}(A_1)=\Phi_{\mathcal{I}_2,\mathcal{I}_3}(A_2)$.
Since
$$\Psi_{\mathcal{I},\mathcal{I}_2}(A_2)=\Psi_{\mathcal{I},\mathcal{I}_2}\Psi_{\mathcal{I}_2,\mathcal{I}_3}\Phi_{\mathcal{I}_2,\mathcal{I}_3}(A_2)=\Psi_{\mathcal{I},\mathcal{I}_3}\Phi_{\mathcal{I}_2,\mathcal{I}_3}(A_2)$$
and
$$\Psi_{\mathcal{I},\mathcal{I}_1}(A_1)=\Psi_{\mathcal{I},\mathcal{I}_1}\Psi_{\mathcal{I}_1,\mathcal{I}_3}\Phi_{\mathcal{I}_1,\mathcal{I}_3}(A_1)=\Psi_{\mathcal{I},\mathcal{I}_3}\Phi_{\mathcal{I}_1,\mathcal{I}_3}(A_1),$$
we have $\Psi_{\mathcal{I},\mathcal{I}_1}(A_1)=\Psi_{\mathcal{I},\mathcal{I}_2}(A_2)$.
Hence, $\Theta_{\mathcal{I}}(A)$ is independent of the choice of $\mathcal{I}_1$ and $A_1$, and we get a map
$$\Theta_{\mathcal{I}}: K_{A_{\mathbb{R}}}\rightarrow K_{Q_\mathcal{I}}.$$
By the definition of $\bar{K}_{A_{\mathbb{R}}}$, there is 
a map
$$\Theta_{A_{\mathbb{R}}}: K_{A_{\mathbb{R}}}\rightarrow\bar{K}_{A_{\mathbb{R}}}.$$

\begin{lemma}\label{lemma_1}
For any partitions $\mathcal{I}$ of $\mathbb{R}$, we have the following commutative diagram
$$\xymatrix{K_{A_{\mathbb{R}}}\ar[r]^-{\Theta_{\mathcal{I}}}\ar[d]_-{\chi_{A_{\mathbb{R}}}}&K_{Q_\mathcal{I}}\ar[d]^-{\chi^F_{Q_\mathcal{I}}}\\
\mathcal{F}_{A_{\mathbb{R}}}\ar[r]^-{(\sigma^\ast_{\mathcal{I}})^{-1}\tau^\ast_{\mathcal{I}}}&\mathcal{F}_{Q_\mathcal{I}}.}$$
\end{lemma}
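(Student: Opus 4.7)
The plan is to reduce the commutativity to a diagram chase that combines Lemma \ref{prop_5.1} and Lemma \ref{prop_5.2} together with the fact that $\chi_{A_{\mathbb{R}}}$ is constructed from the direct-limit system $(\chi_{\mathcal{I}})$. So I start with an arbitrary $A\in K_{A_{\mathbb{R}}}$ and pick, by construction of the direct limit, some refinement $\mathcal{I}_1$ of $\mathcal{I}$ together with $A_1\in K_{Q_{\mathcal{I}_1}}$ such that $A=\Phi_{\mathcal{I}_1}(A_1)$. By the very definition of $\Theta_{\mathcal{I}}$ given just above the lemma, the upper-right corner is then
$$\chi^F_{Q_\mathcal{I}}\Theta_{\mathcal{I}}(A)=\chi^F_{Q_\mathcal{I}}\Psi_{\mathcal{I},\mathcal{I}_1}(A_1),$$
and applying Lemma \ref{prop_5.1} to the refinement $\mathcal{I}_1$ of $\mathcal{I}$ rewrites this as $\Psi^F_{\mathcal{I},\mathcal{I}_1}\chi^F_{Q_{\mathcal{I}_1}}(A_1)$.

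For the other path, I would use the compatibility of the direct limit with $\chi_{A_{\mathbb{R}}}$: by the construction in the previous section, $\chi_{A_{\mathbb{R}}}(A)=\chi_{\mathcal{I}_1}(A_1)=\sigma^\ast_{\mathcal{I}_1}\chi^F_{Q_{\mathcal{I}_1}}(A_1)$, viewed inside $\mathcal{F}_{A_{\mathbb{R}},\mathcal{I}_1}\subset\mathcal{F}_{A_{\mathbb{R}}}$. Since $\mathcal{I}_1$ refines $\mathcal{I}$, we have $E_{\bfV,\mathcal{I}}\subset E_{\bfV,\mathcal{I}_1}\subset E_{\bfV}$ and hence the factorization $\tau_{\mathcal{I}}=\tau_{\mathcal{I}_1}\circ\tau_{\mathcal{I},\mathcal{I}_1}$ at the level of embeddings, which gives $\tau^\ast_{\mathcal{I}}=\tau^\ast_{\mathcal{I},\mathcal{I}_1}\circ\tau^\ast_{\mathcal{I}_1}$ at the level of functions. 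Because the function $\sigma^\ast_{\mathcal{I}_1}\chi^F_{Q_{\mathcal{I}_1}}(A_1)$ is already supported on $E_{\bfV,\mathcal{I}_1}$, applying $\tau^\ast_{\mathcal{I}_1}$ leaves it unchanged (as an element of $\mathcal{F}_{A_{\mathbb{R}},\mathcal{I}_1}$), so
$$\tau^\ast_{\mathcal{I}}\chi_{A_{\mathbb{R}}}(A)=\tau^\ast_{\mathcal{I},\mathcal{I}_1}\sigma^\ast_{\mathcal{I}_1}\chi^F_{Q_{\mathcal{I}_1}}(A_1).$$

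The final step is to apply $(\sigma^\ast_{\mathcal{I}})^{-1}$ and invoke Lemma \ref{prop_5.2} applied to the pair $\mathcal{I}\preccurlyeq\mathcal{I}_1$, which reads $\sigma^\ast_{\mathcal{I}}\Psi^F_{\mathcal{I},\mathcal{I}_1}=\tau^\ast_{\mathcal{I},\mathcal{I}_1}\sigma^\ast_{\mathcal{I}_1}$, or equivalently $(\sigma^\ast_{\mathcal{I}})^{-1}\tau^\ast_{\mathcal{I},\mathcal{I}_1}\sigma^\ast_{\mathcal{I}_1}=\Psi^F_{\mathcal{I},\mathcal{I}_1}$. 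Substituting gives
$$(\sigma^\ast_{\mathcal{I}})^{-1}\tau^\ast_{\mathcal{I}}\chi_{A_{\mathbb{R}}}(A)=\Psi^F_{\mathcal{I},\mathcal{I}_1}\chi^F_{Q_{\mathcal{I}_1}}(A_1),$$
which matches the expression we computed from the upper-right corner.

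The only potentially subtle point is the well-definedness check: the computation depends on the auxiliary choice of $\mathcal{I}_1$ and $A_1$, so in principle one has to verify that both sides of the identity are independent of this choice. On the left this is the definition of $\chi_{A_{\mathbb{R}}}$ through the direct limit; on the right it is exactly the independence of $\Theta_{\mathcal{I}}(A)$ that was established just before the lemma via the common refinement argument. Hence the identity passes to the direct limit, yielding the commutativity claimed. The main obstacle is therefore purely bookkeeping: keeping track of which $\sigma^\ast$, $\tau^\ast$ and refinement is being used at each step, and making sure the refinement chosen to represent $A$ is a common refinement of $\mathcal{I}$ with itself, so that both Lemma \ref{prop_5.1} and Lemma \ref{prop_5.2} are available in the appropriate direction.
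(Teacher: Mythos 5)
Your proposal is correct and follows essentially the same route as the paper's proof: decompose $A=\Phi_{\mathcal{I}_1}(A_1)$ for a refinement $\mathcal{I}_1$ of $\mathcal{I}$, evaluate the top-right path via Lemma \ref{prop_5.1} and the bottom-left path via $\chi_{A_{\mathbb{R}}}\Phi_{\mathcal{I}_1}=(\tau_{\mathcal{I}_1})_!\sigma^\ast_{\mathcal{I}_1}\chi^F_{Q_{\mathcal{I}_1}}$ together with $\tau^\ast_{\mathcal{I}}=\tau^\ast_{\mathcal{I},\mathcal{I}_1}\tau^\ast_{\mathcal{I}_1}$ and $\tau^\ast_{\mathcal{I}_1}(\tau_{\mathcal{I}_1})_!=\mathrm{id}$, then match the two via Lemma \ref{prop_5.2}. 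The only cosmetic difference is that you apply $(\sigma^\ast_{\mathcal{I}})^{-1}$ to one side while the paper applies $\sigma^\ast_{\mathcal{I}}$ to the other, which is equivalent.
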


\begin{proof}
For any $A\in K_{A_{\mathbb{R}}}$, there exists $\mathcal{I}_1$ such that $A\in \textrm{Im}(\Phi_{\mathcal{I}_1})$ and
$\mathcal{I}_1$ is a 
refinement of $\mathcal{I}$.
Let $A=\Phi_{\mathcal{I}_1}(A_1)$. By definition, $\Theta_{\mathcal{I}}(A)=\Psi_{\mathcal{I},\mathcal{I}_1}(A_1).$
Then, we have$$\sigma^\ast_{\mathcal{I}}\chi^F_{Q_\mathcal{I}}\Theta_{\mathcal{I}}(A)=\sigma^\ast_{\mathcal{I}}\chi^F_{Q_\mathcal{I}}\Psi_{\mathcal{I},\mathcal{I}_1}(A_1)=\sigma^\ast_{\mathcal{I}}\Psi^F_{\mathcal{I},\mathcal{I}_1}\chi^F_{Q_{\mathcal{I}_1}}(A_1)=\tau^\ast_{\mathcal{I},\mathcal{I}_1}\sigma^\ast_{\mathcal{I}_1}\chi^F_{Q_{\mathcal{I}_1}}(A_1)$$
and
$$\tau^\ast_{\mathcal{I}}\chi_{A_{\mathbb{R}}}(A)=\tau^\ast_{\mathcal{I}}\chi_{A_{\mathbb{R}}}\Phi_{\mathcal{I}_1}(A_1)=\tau^\ast_{\mathcal{I},\mathcal{I}_1}\tau^\ast_{\mathcal{I}_1}(\tau_{\mathcal{I}_1})_!\sigma^\ast_{\mathcal{I}_1}\chi^F_{Q_{\mathcal{I}_1}}(A_1)={\tau}^\ast_{\mathcal{I},\mathcal{I}_1}\sigma^\ast_{\mathcal{I}_1}\chi^F_{Q_{\mathcal{I}_1}}(A_1).$$

Hence, we have $$\sigma^\ast_{\mathcal{I}}\chi^F_{Q_\mathcal{I}}\Theta_{\mathcal{I}}(A)=\tau^\ast_{\mathcal{I}}\chi_{A_{\mathbb{R}}}(A)$$
and get the desired result.

\end{proof}

\begin{proposition}
The following diagram is commutative
$$\xymatrix{K_{A_{\mathbb{R}}}\ar[r]^-{\Theta_{A_{\mathbb{R}}}}\ar[d]_-{\chi_{A_{\mathbb{R}}}}&\bar{K}_{A_{\mathbb{R}}}\ar[d]^-{\bar{\chi}_{A_{\mathbb{R}}}}\\
\mathcal{F}_{A_{\mathbb{R}}}\ar[r]^-{\theta_{A_{\mathbb{R}}}}&\bar{\mathcal{F}}_{A_{\mathbb{R}}}.}$$
\end{proposition}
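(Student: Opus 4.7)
The plan is to reduce the commutativity to a statement that can be checked componentwise in the inverse limit. By Proposition \ref{prop_inverse_5_1}, $\bar{\mathcal{F}}_{A_\mathbb{R}}$ is the inverse limit of the system $(\mathcal{F}_{G_\bfV}(E_{\bfV,\mathcal{I}}),\tau^\ast_{\mathcal{I}_1,\mathcal{I}_2})$, with structural maps $\tau^\ast_\mathcal{I}: \bar{\mathcal{F}}_{A_\mathbb{R}} \to \mathcal{F}_{G_\bfV}(E_{\bfV,\mathcal{I}})$. Composing with the isomorphism $(\sigma^\ast_\mathcal{I})^{-1}$ from Lemma \ref{iso_I_1}, it is equivalent to verify that the two compositions $(\sigma^\ast_\mathcal{I})^{-1}\tau^\ast_\mathcal{I}\circ\bar\chi_{A_\mathbb{R}}\circ\Theta_{A_\mathbb{R}}$ and $(\sigma^\ast_\mathcal{I})^{-1}\tau^\ast_\mathcal{I}\circ\theta_{A_\mathbb{R}}\circ\chi_{A_\mathbb{R}}$ agree as maps $K_{A_\mathbb{R}}\to\mathcal{F}_{Q_\mathcal{I}}$ for every partition $\mathcal{I}$ of $\mathbb{R}$.

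First I would analyze the outer path $\theta_{A_\mathbb{R}}\circ\chi_{A_\mathbb{R}}$. Since $\theta_{A_\mathbb{R}}$ is simply the inclusion $\mathcal{F}_{A_\mathbb{R}}\hookrightarrow\bar{\mathcal{F}}_{A_\mathbb{R}}$, the composite $(\sigma^\ast_\mathcal{I})^{-1}\tau^\ast_\mathcal{I}\circ\theta_{A_\mathbb{R}}\circ\chi_{A_\mathbb{R}}$ is exactly $(\sigma^\ast_\mathcal{I})^{-1}\tau^\ast_\mathcal{I}\circ\chi_{A_\mathbb{R}}$, which by Lemma \ref{lemma_1} coincides with $\chi^F_{Q_\mathcal{I}}\circ\Theta_\mathcal{I}$.

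Next I would handle the other path. By Theorem \ref{thm_5.5}, the square
\[
\xymatrix{\bar K_{A_\mathbb{R}}\ar[r]^-{\Psi_\mathcal{I}}\ar[d]_-{\bar\chi_{A_\mathbb{R}}}&K_{Q_\mathcal{I}}\ar[d]^-{\chi^F_{Q_\mathcal{I}}}\\ \bar{\mathcal F}_{A_\mathbb{R}}\ar[r]^-{(\sigma^\ast_\mathcal{I})^{-1}\tau^\ast_\mathcal{I}}&\mathcal F_{Q_\mathcal{I}}}
\]
commutes, so $(\sigma^\ast_\mathcal{I})^{-1}\tau^\ast_\mathcal{I}\circ\bar\chi_{A_\mathbb{R}}\circ\Theta_{A_\mathbb{R}}=\chi^F_{Q_\mathcal{I}}\circ\Psi_\mathcal{I}\circ\Theta_{A_\mathbb{R}}$. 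The identity $\Psi_\mathcal{I}\circ\Theta_{A_\mathbb{R}}=\Theta_\mathcal{I}$ is immediate from the very definition of $\Theta_{A_\mathbb{R}}$: by construction $\Theta_{A_\mathbb{R}}(A)$ is the element of the inverse limit whose $\mathcal{I}$-component is $\Theta_\mathcal{I}(A)$, and $\Psi_\mathcal{I}$ is the projection onto that component. Combining, the second composite equals $\chi^F_{Q_\mathcal{I}}\circ\Theta_\mathcal{I}$, matching the first.

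There is no real obstacle here; the proposition is a formal consequence of the inverse limit universal property together with Theorem \ref{thm_5.5} and Lemma \ref{lemma_1}. The only point requiring care is confirming that $\Theta_{A_\mathbb{R}}$ is well-defined as a map into the inverse limit, which amounts to checking the compatibility $\Psi_{\mathcal{I},\mathcal{I}'}\circ\Theta_{\mathcal{I}'}=\Theta_\mathcal{I}$ whenever $\mathcal{I}'$ refines $\mathcal{I}$; this follows from the cocycle identity $\Psi_{\mathcal{I}_1,\mathcal{I}_2}\circ\Psi_{\mathcal{I}_2,\mathcal{I}_3}=\Psi_{\mathcal{I}_1,\mathcal{I}_3}$ and the section relation $\Psi_{\mathcal{I}_1,\mathcal{I}_2}\Phi_{\mathcal{I}_1,\mathcal{I}_2}=\mathrm{id}$ already used in the construction of $\Theta_\mathcal{I}$.
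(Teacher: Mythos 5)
Your proposal is correct and follows essentially the same route as the paper: both reduce the claim to checking equality after applying $(\sigma^\ast_{\mathcal{I}})^{-1}\tau^\ast_{\mathcal{I}}$ for every partition $\mathcal{I}$, identify one composite as $\chi^F_{Q_\mathcal{I}}\circ\Theta_{\mathcal{I}}$ via Lemma \ref{lemma_1} and the other via Theorem \ref{thm_5.5} together with $\Psi_{\mathcal{I}}\circ\Theta_{A_{\mathbb{R}}}=\Theta_{\mathcal{I}}$, and conclude from the fact that the projections out of the inverse limit jointly separate elements. The only difference is presentational: the paper argues elementwise with $f$ and $g$, while you phrase it at the level of maps.
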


\begin{proof}
For any $A\in K_{A_{\mathbb{R}}}$, let $A_{\mathcal{I}}=\Theta_{\mathcal{I}}(A)\in K_{Q_{\mathcal{I}}}$. Since $\Psi_{\mathcal{I}_1,\mathcal{I}_2}\Psi_{\mathcal{I}_2}=\Psi_{\mathcal{I}_1}$, we have 
\begin{equation}\label{formula_5.6}
  A_{\mathcal{I}_1}=\Theta_{\mathcal{I}_1}(A)=\Psi_{\mathcal{I}_1}\Theta_{A_\mathbb{R}}(A)=\Psi_{\mathcal{I}_1,\mathcal{I}_2}\Psi_{\mathcal{I}_2}\Theta_{A_\mathbb{R}}(A)=\Psi_{\mathcal{I}_1,\mathcal{I}_2}(A_{\mathcal{I}_2}).  
\end{equation}

Let  $f_{\mathcal{I}}=\sigma^\ast_{\mathcal{I}}\chi^F_{Q_\mathcal{I}}(A_{\mathcal{I}})$. By Proposition \ref{prop_3}, Formula (\ref{formula_5.6}) implies that $f_{\mathcal{I}_1}=\tau^\ast_{\mathcal{I}_1,\mathcal{I}_1}(f_{\mathcal{I}_2})$. By Lemma \ref{prop_5.1}, there exists $f\in\bar{\mathcal{F}}_{A_{\mathbb{R}}}$ such that $\tau^\ast_{\mathcal{I}}(f)=f_{\mathcal{I}}$. Then, we have $\bar{\chi}_{A_{\mathbb{R}}}\Theta_{A_{\mathbb{R}}}(A)=f$  by Theorem \ref{thm_5.5}.

Let $g=\theta_{A_{\mathbb{R}}}\chi_{A_{\mathbb{R}}}(A)$.
Then $$\tau^\ast_{\mathcal{I}}(g)=\tau^\ast_{\mathcal{I}}\chi_{A_{\mathbb{R}}}(A)=\sigma^\ast_{\mathcal{I}}\chi^F_{Q_\mathcal{I}}\Theta_{\mathcal{I}}(A)=f_{\mathcal{I}}=\tau^\ast_{\mathcal{I}}(f)$$
by Lemma \ref{lemma_1}.

Hence, we have $f=g$ and get the desired result.

\end{proof}

Since $\bar{\mathcal{F}}_{A_{\mathbb{R}}}$ is the completion of  $\mathcal{F}_{A_{\mathbb{R}}}$, the set $\bar{K}_{A_{\mathbb{R}}}$ can be viewed as a completion of the geometric version $K_{A_{\mathbb{R}}}$ of Ringel-Hall algebra $\mathcal{F}_{A_{\mathbb{R}}}$.

\subsection{}

Let $V=(\bfV,\phi)$ be a finitely generated representation of $A_{\mathbb{R}}$. Let $\mathcal{I}_1$ and $\mathcal{I}_2$ be partitions of $\mathbb{R}$ such that $\mathcal{I}_2$ is a refinement of $\mathcal{I}_1$ and $\phi\in E_{\bfV,\mathcal{I}_1}$.

Let $V_{\mathcal{I}_1}=(\bfV_{\mathcal{I}_1},\sigma_{\mathcal{I}_1}(\phi))$ and $V_{\mathcal{I}_2}=(\bfV_{\mathcal{I}_2},\sigma_{\mathcal{I}_2}(\phi))$.
Denote by $\mathcal{O}_{V_{\mathcal{I}_1}}$ the orbit in $E_{\bfV_{\mathcal{I}_1}}$  corresponding to $V_{\mathcal{I}_1}$ and $\mathcal{O}_{V_{\mathcal{I}_2}}$ the orbit in $E_{\bfV_{\mathcal{I}_2}}$  corresponding to $V_{\mathcal{I}_2}$.
Denote by $$j_{\mathcal{O}_{V_{\mathcal{I}_1}}}:\mathcal{O}_{V_{\mathcal{I}_1}}\rightarrow E_{\bfV_{\mathcal{I}_1}}, j_{\mathcal{O}_{V_{\mathcal{I}_2}}}:\mathcal{O}_{V_{\mathcal{I}_2}}\rightarrow E_{\bfV_{\mathcal{I}_2}}\textrm{ and }\hat{j}_{\mathcal{O}_{V_{\mathcal{I}_2}}}:\mathcal{O}_{V_{\mathcal{I}_1}}\rightarrow \hat{E}$$ the natural embeddings.

Consider the following commutative diagram
\begin{equation*}
\xymatrix{E_{\bfV_{\mathcal{I}_1}}&\hat{E}\ar[l]_-{r_1}\ar[r]^-{r_2}&E_{\bfV_{\mathcal{I}_2}}\\
\mathcal{O}_{V_{\mathcal{I}_1}}\ar[u]_-{j_{\mathcal{O}_{V_{\mathcal{I}_1}}}}&\mathcal{O}_{V_{\mathcal{I}_2}}\ar[u]_-{\hat{j}_{\mathcal{O}_{V_{\mathcal{I}_2}}}}\ar[l]_-{r_1}\ar[r]^-{id}&\mathcal{O}_{V_{\mathcal{I}_2}}\ar[u]_-{j_{\mathcal{O}_{V_{\mathcal{I}_2}}}}.}
\end{equation*}
Now, we have
\begin{eqnarray*}
\Psi_{\mathcal{I}_1,\mathcal{I}_2}(IC_{\mathcal{O}_{V_{\mathcal{I}_2}}})
&\cong&{r_1}_{\flat}r_2^\ast(j_{\mathcal{O}_{V_{\mathcal{I}_2}}})_{!\ast}(1_{\mathcal{O}_{V_{\mathcal{I}_2}}})\\
&\cong&{r_1}_{\flat}(\hat{j}_{\mathcal{O}_{V_2}})_{!\ast}(1_{\mathcal{O}_{V_{\mathcal{I}_2}}})\\
&\cong&(\hat{j}_{\mathcal{O}_{V_{\mathcal{I}_1}}})_{!\ast}(1_{\mathcal{O}_{V_{\mathcal{I}_1}}})=IC_{\mathcal{O}_{V_{\mathcal{I}_1}}}.   
\end{eqnarray*}
Hence, it holds that
$$\Psi_{\mathcal{I}_1,\mathcal{I}_2}([IC_{\mathcal{O}_{V_{\mathcal{I}_2}}}])=[IC_{\mathcal{O}_{V_{\mathcal{I}_1}}}].$$
By the definition of $\bar{K}_{A_{\mathbb{R}}}$, there exist $[IC_{\mathcal{O}_{V}}]\in\bar{K}_{A_{\mathbb{R}}}$ such that $$\Psi_{\mathcal{I}}([IC_{\mathcal{O}_{V}}])=[IC_{\mathcal{O}_{V_\mathcal{I}}}]$$ for any $\mathcal{I}$ such that $\phi\in E_{\bfV,\mathcal{I}}$.

\begin{theorem}
The set
$$B_{A_{\mathbb{R}}}=\{[IC_{\mathcal{O}_{V}}[n](\frac{n}{2})]|V\in\textrm{rep}_{{K}}(A_\mathbb{R}), n\in\bbZ\}$$ is a $\mathbb{Z}$-basis of $\bar{K}_{A_{\mathbb{R}}}$. 
\end{theorem}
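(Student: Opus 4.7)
The plan is to verify the two defining properties of a basis: $\mathbb{Z}$-linear independence and spanning of $\bar{K}_{A_{\mathbb{R}}}$. The elements $[IC_{\mathcal{O}_V}]\in\bar{K}_{A_{\mathbb{R}}}$ for $V\in\textrm{rep}_K(A_{\mathbb{R}})$ were already constructed in the paragraph preceding the theorem, via the key identity $\Psi_{\mathcal{I}_1,\mathcal{I}_2}([IC_{\mathcal{O}_{V_{\mathcal{I}_2}}}])=[IC_{\mathcal{O}_{V_{\mathcal{I}_1}}}]$ and the universal property of the inverse limit $(\bar{K}_{A_{\mathbb{R}}},\Psi_{\mathcal{I}})$; the shifted and twisted versions $[IC_{\mathcal{O}_V}[n](\frac{n}{2})]$ come from the induced $\mathbb{Z}[v,v^{-1}]$-module structure on $\bar{K}_{A_{\mathbb{R}}}$.

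Linear independence reduces to the finite-type case. Suppose $\sum_{i=1}^{s}c_i[IC_{\mathcal{O}_{V_i}}[n_i](\frac{n_i}{2})]=0$ in $\bar{K}_{A_{\mathbb{R}}}$ for pairwise distinct data $(V_i,n_i)$ with $V_i\in\textrm{rep}_K(A_{\mathbb{R}})$. Each $V_i=(\bfV_i,\phi_i)$ satisfies $\phi_i\in E_{\bfV_i,\mathcal{I}_i}$ for some partition $\mathcal{I}_i$, so I would take $\mathcal{I}$ to be any common refinement of the finitely many $\mathcal{I}_i$. Since $\sigma_{\mathcal{I}}$ induces a bijection between $G_{\bfV}$-orbits in $E_{\bfV,\mathcal{I}}$ and $G_{\bfV_{\mathcal{I}}}$-orbits in $E_{Q_{\mathcal{I}},\bfV_{\mathcal{I}}}$, pairwise non-isomorphic $V_i$ produce pairwise non-isomorphic $V_{i,\mathcal{I}}$ in $\textrm{rep}(Q_{\mathcal{I}})$. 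Applying $\Psi_{\mathcal{I}}$ to the relation and using $\Psi_{\mathcal{I}}([IC_{\mathcal{O}_{V_i}}])=[IC_{\mathcal{O}_{V_{i,\mathcal{I}}}}]$ yields $\sum_i c_i[IC_{\mathcal{O}_{V_{i,\mathcal{I}}}}[n_i](\frac{n_i}{2})]=0$ in $K_{Q_{\mathcal{I}}}$, and Theorem \ref{lusztig_canonical} forces all $c_i=0$.

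For spanning, given $A\in\bar{K}_{A_{\mathbb{R}}}$, each coordinate $\Psi_{\mathcal{I}}(A)\in K_{Q_{\mathcal{I}}}$ admits a finite expansion in the canonical basis $B_{Q_{\mathcal{I}}}$. To assemble these into a global expansion in $B_{A_{\mathbb{R}}}$ I would establish the following compatibility with the transition maps: for $W\in\textrm{rep}(Q_{\mathcal{I}_2})$, the image $\Psi_{\mathcal{I}_1,\mathcal{I}_2}([IC_{\mathcal{O}_W}])={r_1}_\flat r_2^\ast[IC_{\mathcal{O}_W}]$ is either zero (when $\overline{\mathcal{O}_W}\cap\hat{E}=\emptyset$) or equals a single basis element of the form $[IC_{\mathcal{O}_{V_{\mathcal{I}_1}}}[m](\frac{m}{2})]$ associated to some representation $V\in\textrm{rep}_K(A_{\mathbb{R}})$ already defined on $\mathcal{I}_1$. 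Granting this, matching coefficients across successive refinements, together with the inverse-limit universal property for $(\bar{K}_{A_{\mathbb{R}}},\Psi_{\mathcal{I}})$, produces the unique expansion $A=\sum c_{V,n}[IC_{\mathcal{O}_V}[n](\frac{n}{2})]$.

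The main obstacle lies in the spanning step, namely in establishing the stated description of $\Psi_{\mathcal{I}_1,\mathcal{I}_2}$ on canonical bases. Since $r_2$ is the open embedding of $\hat{E}$ into $E_{\bbV_{\mathcal{I}_2}}$, the pullback $r_2^\ast(IC_{\mathcal{O}_W})$ is the restriction $IC_{\mathcal{O}_W}|_{\hat{E}}$, which is the intermediate extension of the constant sheaf on the open piece $\mathcal{O}_W\cap\hat{E}$ to $\overline{\mathcal{O}_W}\cap\hat{E}$; one then needs to identify this intersection, up to closure, with a single $G_{\bbV_{\mathcal{I}_2}}$-orbit arising from a representation of $A_{\mathbb{R}}$ defined on $\mathcal{I}_1$, and to track the degree and Tate twist picked up under the principal $G_{\bbV_{\mathcal{I}_2-\mathcal{I}_1}}$-bundle map $r_1$. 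Once this canonical-basis compatibility of the inverse-system maps is verified, both the existence and the finiteness of the expansion follow formally from the construction of $\bar{K}_{A_{\mathbb{R}}}$.
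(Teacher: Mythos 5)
The paper's own proof of this theorem is a single sentence (``direct by the definition of $\bar{K}_{A_{\mathbb{R}}}$ and Theorem \ref{lusztig_canonical}''), so your write-up is considerably more explicit than the source. Your linear-independence half is correct and is exactly the reduction the paper has in mind: pass to a common refinement $\mathcal{I}$, apply the projection $\Psi_{\mathcal{I}}$, use that $\sigma_{\mathcal{I}}$ is an orbit bijection so the $V_{i,\mathcal{I}}$ stay pairwise non-isomorphic, and invoke Theorem \ref{lusztig_canonical}. The compatibility you isolate for the spanning step (each $\Psi_{\mathcal{I}_1,\mathcal{I}_2}$ sends a canonical basis element either to $0$ or to a single shifted canonical basis element) is also the right structural fact: since $\hat{E}$ is open and $G_{\bbV_{\mathcal{I}_2}}$-stable, $\overline{\mathcal{O}_W}\cap\hat{E}\neq\emptyset$ forces $\mathcal{O}_W\subset\hat{E}$, so $r_2^\ast IC_{\mathcal{O}_W}$ is the $IC$ sheaf of a single orbit in $\hat{E}$, which descends along the principal bundle $r_1$; the paper's computation of $\Psi_{\mathcal{I}_1,\mathcal{I}_2}(IC_{\mathcal{O}_{V_{\mathcal{I}_2}}})\cong IC_{\mathcal{O}_{V_{\mathcal{I}_1}}}$ just before the theorem is the non-vanishing case of this.

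The gap is your final claim that ``the finiteness of the expansion follows formally from the construction of $\bar{K}_{A_{\mathbb{R}}}$''. It does not: the full inverse limit contains compatible families that are not finite $\mathbb{Z}$-linear combinations of $B_{A_{\mathbb{R}}}$. Concretely, take finitely generated interval representations $V_k$ supported on $[0,1/k]$ for $k\geq 1$. For a fixed partition $\mathcal{I}$ only finitely many of the points $1/k$ can be endpoints of intervals of $\mathcal{I}$, so $\Psi_{\mathcal{I}}([IC_{\mathcal{O}_{V_k}}])=0$ for all but finitely many $k$; hence $A_{\mathcal{I}}:=\sum_{k}\Psi_{\mathcal{I}}([IC_{\mathcal{O}_{V_k}}])$ lies in $K_{Q_{\mathcal{I}}}$ and $(A_{\mathcal{I}})_{\mathcal{I}}$ is a compatible family, i.e.\ an element of $\bar{K}_{A_{\mathbb{R}}}$. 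By your own linear-independence argument (applied at a partition $\mathcal{I}$ fine enough to separate $V_1,\dots,V_N$ with $N$ large) this element cannot equal any finite combination of elements of $B_{A_{\mathbb{R}}}$. So $B_{A_{\mathbb{R}}}$ spans only in a topological sense: every element of the inverse limit is a unique, possibly infinite, sum whose image at each level $\mathcal{I}$ is finite. Your coefficient-matching argument proves exactly that statement, not finiteness. This defect is shared by the paper's one-line proof and arguably by the statement itself; to close it you must either read ``$\mathbb{Z}$-basis'' as ``topological $\mathbb{Z}$-basis of the completion'' or replace $\bar{K}_{A_{\mathbb{R}}}$ by the submodule generated by finite sums.
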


\begin{proof}
This result is direct by the definition of $\bar{K}_{A_{\mathbb{R}}}$ and Theorem \ref{lusztig_canonical}.

\end{proof}

The basis $B_{A_{\mathbb{R}}}$ is called the canonical basis of $\bar{K}_{A_{\mathbb{R}}}$.

\bibliography{mybibfile}

\end{document}